\newtheorem{thm}{Theorem}[section]
\newtheorem{prop}[thm]{Proposition}
\newtheorem{lem}[thm]{Lemma}
\newtheorem{coro}[thm]{Corollary}
\newtheorem{defn}[thm]{Definition}
\newtheorem{expl}[thm]{Example}
\newtheorem{rmk}[thm]{Remark}
\numberwithin{equation}{section}
\newcommand{\rbb}{\mathbb{R}}
\newcommand{\kbb}{\mathbb{K}}
\newcommand{\pbb}{\mathbb{P}}
\newcommand{\lra}{\longrightarrow}
\newcommand{\mcalk}{\mathcal{K}}
\newcommand{\mcalo}{\mathcal{O}}
\newcommand{\mcalc}{\mathcal{C}}
\newcommand{\om}{\mathcal{O}(M)}
\newcommand{\okm}{\mathcal{O}_k(M)}
\newcommand{\id}{\mbox{id}}
\newcommand{\okmu}{\mathcal{O}^{(k)}(M)}
\newcommand{\mcala}{\mathcal{A}}
\newcommand{\mcalf}{\mathcal{F}}
\newcommand{\mcald}{\mathcal{D}}
\newcommand{\mcalt}{\mathcal{T}}
\newcommand{\mcalp}{\mathcal{P}}
\newcommand{\mcalb}{\mathcal{B}}
\newcommand{\mcalu}{\mathcal{U}}
\newcommand{\mcalv}{\mathcal{V}}
\newcommand{\mcali}{\mathcal{I}}
\newcommand{\mcalr}{\mathcal{R}}
\newcommand{\fb}{\overline{F}}
\newcommand{\bkmu}{\mathcal{B}^{(k)}(M)}
\newcommand{\bku}{\mathcal{B}^{(k)}}
\newcommand{\btku}{\widetilde{\mathcal{B}}^{(k)}}
\newcommand{\fvect}{f\mbox{Vect}_{\mathbb{K}}}
\newcommand{\bomu}{\mathcal{B}^{(1)}(M)}
\newcommand{\bou}{\mathcal{B}^{(1)}}
\newcommand{\vgvb}{VGVB\left(\mathcal{B}^{(1)}(M)\right)}
\newcommand{\vgvbf}{VGVB\left(\mathcal{B}^{(1)}(F_k(M))\right)}
\newcommand{\vgm}{\mathcal{F}\left(\mathcal{B}^{(1)}(M); \fvect\right)}
\newcommand{\vgfm}{\mathcal{F}\left(\mathcal{B}^{(1)}(F_k(M)); \fvect\right)}
\newcommand{\vgms}{\mathcal{F}^*\left(\mathcal{B}^{(1)}(M); \fvect\right)}
\newcommand{\vfi}{\varphi}
\newcommand{\noy}{\mbox{ker}}
\newcommand{\coker}{\mbox{coker}}
\newcommand{\cp}{\mathbb{C}\mathbb{P}}
\newcommand{\tm}{\mathcal{T}^M}
\newcommand{\utm}{\mathcal{U}(\mathcal{T}^M)}
\newcommand{\butm}{\mathcal{B}_{\mathcal{U}(\mathcal{T}^M)}}
\newcommand{\pio}{\pi_1}
\newcommand{\delk}{\Delta[k]}
\newcommand{\delko}{\Delta[k]^{op}}
\newcommand{\ddelko}{\partial\Delta[k]^{op}}
\newcommand{\phib}{\overline{\phi}}
\newcommand{\tkm}{\mathcal{T}^{M}_k}
\newcommand{\tom}{\mathcal{T}^{M}_1}
\newcommand{\ttm}{\mathcal{T}^{M}_2}
\newcommand{\fbkv}{\mathcal{F}(\mathcal{B}^{(k)}(M); \mcalc)}     
\newcommand{\fkov}{\mathcal{F}_k(\mathcal{O}(M); \mcalc)}
\newcommand{\lv}{\langle v \rangle}
\newcommand{\vi}{\langle v_i \rangle}
\newcommand{\vit}{\langle v_{i+1} \rangle}
\newcommand{\vij}{\langle v_i v_j \rangle}
\newcommand{\vji}{\langle v_j v_i \rangle}
\newcommand{\iso}{\text{Iso}}
\newcommand{\hra}{\hookrightarrow}
\newcommand{\lla}{\longleftarrow}
\newcommand{\ssie}{\subseteq_{ie}}
\newcommand{\frk}{F^{!}}
\newcommand{\ok}{\mathcal{O}_k}
\newcommand{\repg}{\text{Rep}_{\mcalc}(G)}
\newcommand{\repi}{\text{Rep}_{\mcalc}(\pio(M))}
\newcommand{\repif}{\text{Rep}_{\mcalc}(\pio(F_k(M)))}
\newcommand{\fpic}{\mcalf\left(\Pi(M); \mcalc\right)}
\newcommand{\fpifc}{\mcalf\left(\Pi(F_k(M)); \mcalc\right)}
\newcommand{\phig}{\Phi_G}
\newcommand{\phigp}{\Phi_{G'}}
\newcommand{\too}{\mathcal{T}^M_0}
\newcommand{\vzot}{\langle v_0v_1v_2\rangle}
\newcommand{\vot}{\langle v_1v_2\rangle}
\newcommand{\vzo}{\langle v_0v_1\rangle}
\newcommand{\vtz}{\langle v_2v_0\rangle}
\newcommand{\vz}{\langle v_0\rangle}
\newcommand{\vo}{\langle v_1\rangle}
\newcommand{\vt}{\langle v_2\rangle}
\newcommand{\phie}{\Phi_{\eta}}
\newcommand{\vzk}{\langle v_0 \cdots v_k\rangle}
\newcommand{\vok}{\langle v_1 \cdots v_k\rangle}
\newcommand{\phisf}{\Phi_{\Psi_F}}
\newcommand{\rhof}{\rho_F}
\newcommand{\rhofp}{\rho_{F'}}
\newcommand{\ftild}{\widetilde{f}}
\newcommand{\aut}{\text{Aut}}
\newcommand{\mcalh}{\mathcal{H}}
\newcommand{\mcals}{\mathcal{S}}
\title{ \textbf{Very good homogeneous functors in manifold calculus}}
\date{}
\author{ Paul Arnaud Songhafouo Tsopm\'en\'e \\
Donald Stanley}
\begin{document}
\maketitle

\begin{abstract}  
Let $M$ be a smooth manifold, and let $\mathcal{O}(M)$ be the poset of open subsets of $M$. Let  $\mcalc$ be a category that has a zero object and all small limits. A homogeneous functor (in the sense of manifold calculus) of degree $k$ from $\om$ to  $\mcalc$ is called \textit{very good} if it sends isotopy equivalences to isomorphisms. In this paper we show that the category $VGHF_k$ of such functors is equivalent to the category of contravariant functors from the fundamental groupoid of $F_k(M)$ to $\mcalc$, where $F_k(M)$ stands for the unordered configuration space of $k$ points in $M$. As a consequence of this result, we show that the category $VGHF_k$ is equivalent to the category of representations of $\pio(F_k(M))$ in $\mcalc$, provided that $F_k(M)$ is connected.  We also introduce a subcategory of vector bundles that we call \textit{very good vector bundles}, and we show that it is abelian, and equivalent to a certain category of very good  functors. 
\end{abstract}

%\tableofcontents

\section{Introduction}

%We establish a connection between two worlds seemingly distant to each other: \textit{manifold calculus} and \textit{representation theory}. We show that a subcategory from the first world is equivalent to the category of representations of a certain group. Before we properly state our results, we briefly recall what is manifold calculus. %and what is representation theory.

Let $M$  and $\om$ as in the abstract. \textit{Manifold calculus}, due to Goodwillie and Weiss \cite{good_weiss99, wei99}, is  a calculus of functors suitable for studying good contravariant functors $F \colon \om \lra \text{Top}$ from $\om$ to the category of spaces. %It is a variation on Goodwillie's calculus \cite{goodwillie03} of homotopy functors.
 %For some history  and a slow introduction to manifold calculus, see \cite{wei96}. 
 The philosophy of calculus of functors is to take a  functor $F$  and replace it by its Taylor tower $\{T_k(F) \lra T_{k-1}(F)\}_{k \geq 1}$, which converges to the original functor in good cases, very much like  the approximation of a function by its Taylor series. Each $T_k(F)$ is called \textit{polynomial approximation} to $F$ of degree $\leq k$. The \lq\lq difference\rq\rq{} $L_kF$ between $T_kF$ and $T_{k-1}F$, or more precisely the homotopy fiber of the canonical map $T_kF \lra T_{k-1}F$,  belongs to a nice class of functors called \textit{homogeneous functors} of degree $k$. %Contrary to polynomial functors whose explicit description is perhaps too much to hope for, homogeneous functors can be  explicitly described. 
In \cite[Theorem 8.5]{wei99}, Weiss proves a deep result about the classification of  homogeneous functors of degree $k$. More precisely, he shows that any such functor is equivalent to a functor $G$ constructed from a fibration over the unordered configuration space $F_k(M)$ of $k$ points in $M$. 
%We will see below that this result is a kind of \lq\lq topological version\rq\rq{} of ours. 

%On the other hand, groups arise in nature as \lq\lq sets of symmetries (of an object), which are closed under composition
%and under taking inverses\rq\rq{}. \textit{Representation theory}  studies the ways in which a given  group $G$ may act on vector spaces. Such actions enable to represent elements of $G$, which could be an abstract group, by more concrete objects (invertible matrices, or linear isomorphisms), and  group multiplication by matrix multiplication.  So representation theory reduces problems in abstract groups to problems in linear algebra, a subject that is well understood. The basic question in that theory is to classify all representations of a given group, up to isomorphisms. For arbitrary $G$ this is very hard! But for finite groups, a very good general theory exists (see for example \cite{serre77}). We are not going to say more about the subject here. For an introduction to the representation theory, we refer the reader to \cite{ful_har91}. 

%It is almost certainly unique, however, among such clearly delineated subjects, in the breadth of its interest to mathematicians.

In this paper we look at the category $\mathcal{F}_k(\mathcal{O}(M); \mcalc)$   of homogeneous functors $F \colon \om \lra \mcalc$, into a \lq\lq nice\rq\rq{} category $\mcalc$, that send isotopy equivalences to isomorphisms. Such functors, which we call \textit{very good}, have been never considered before. Our main result, Theorem~\ref{main_thm_paper} below, roughly classifies objects of  $\mathcal{F}_k(\mathcal{O}(M); \mcalc)$.

\subsection{Statements of the main results and motivation}

%We are interested in very good homogeneous functors because of the following. First of all, w

Let $\fpifc$ denote the category of contravariant functors from the fundamental groupoid $\Pi(F_k(M))$ of $F_k(M)$ to $\mcalc$.  At first glance, this latter category and $\mathcal{F}_k(\mathcal{O}(M); \mcalc)$  appear quite different, but, somewhat miraculously, they turn out to be related. Specifically, we have the following result.  

 %Our main result is a kind of algebraic version of that of  Weiss stated above. Specifically, it says that the category of very good homogeneous functors of degree $k$ is equivalent to the category of representations of $\pio(F_k(M))$ over $\kbb$. Before stating it, we need to introduce two categories. The first, denoted $\mathcal{F}_k(\mathcal{O}(M); \mcalc)$, is the category of very good homogeneous functors  of degree $k$. Given a group $G$, the second  is the  category, denoted $\kgmd$, of representations of $G$ over $\kbb$ defined as follows. An object of $\kgmd$ is a pair $(V, \rho)$ where $V$ is a $\kbb$-vector space, and $\rho \colon G \lra GL(V)$ is a homomorphism of groups (of course $GL(V)$ denotes the usual linear group). A morphism from $(V, \rho)$ to $(V', \rho')$ consists of a linear map $f \colon V \lra V'$ such that for all $(x, v) \in G \times V$, $f(\rho(x)(v)) = \rho'(x)(f(v))$. 

%is a classification of objects of $\mathcal{F}_k(\mathcal{O}(M); \mcalcn)$ as homomorphisms of groups from $\pio(M)$ to $GL_n$. More precisely, let  $GL_n$ denote the usual group of $\kbb$-linear isomorphisms from $\kbbn$ to $\kbbn$, where $\kbb$ stands for the ground field. Consider the set $\homt(\pio(M), GL_n)$ of homomorphisms from $\pio(M)$ to $GL_n$. Also consider the conjugacy relation $\sim$ defined as $f \sim g$ if and only if there is $\varphi \in GL_n$ such that $f(x) = (\varphi)^{-1} g(x) \varphi$ for all $x \in \pio(M)$. Let $\homt(\pio(M), GL_n)\slash \sim$ be the quotient set, and let $F_k(M)$ denote the (non ordered) configuration space of $k$ points in $M$. 

\begin{thm} \label{main_thm_paper}
Let  $\mcalc$ be a category that has a zero object and all small limits. Then the category $\mathcal{F}_k(\mathcal{O}(M); \mcalc)$ of very good homogeneous functors of degree $k$ is equivalent to the category $\fpifc$. That is,
\[
\mathcal{F}_k(\mathcal{O}(M); \mcalc) \simeq \fpifc. 
\]
\end{thm}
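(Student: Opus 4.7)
The plan is to construct an explicit quasi-inverse pair of functors $\Psi \colon \fkov \to \fpifc$ and $\Phi \colon \fpifc \to \fkov$. For the forward direction, given a very good homogeneous functor $F$ of degree $k$, I would define $\Psi_F$ on a configuration $c = \{x_1, \dots, x_k\} \in F_k(M)$ by choosing disjoint small open balls $B_1, \dots, B_k$ with $x_i \in B_i$ and setting $\Psi_F(c) := F(B_1 \sqcup \cdots \sqcup B_k)$. On a path $\gamma$ from $c$ to $c'$, one uses the isotopy extension theorem to lift $\gamma$ to an ambient isotopy of $M$, obtains a resulting isotopy equivalence between the two disjoint unions of balls, and takes $\Psi_F(\gamma)$ to be the isomorphism induced by $F$. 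The \emph{very good} hypothesis is precisely what guarantees that this morphism is an isomorphism and is independent of the choices of balls and isotopy lifts.

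For the reverse direction, given $G \in \fpifc$, I would construct $\Phi_G \colon \om \to \mcalc$ either by right Kan extension along the canonical functor relating multi-ball open sets to $\Pi(F_k(M))$, or more concretely by the formula
\[
\Phi_G(U) := \lim_{c \in \Pi(F_k(U))} G(c),
\]
with functoriality in $U$ coming from inclusion of configuration spaces. The substantive content is to verify that $\Phi_G$ is actually a homogeneous functor of degree $k$, for which I would adapt Weiss's classification (Theorem~8.5 of \cite{wei99}) to $\mcalc$-valued functors: in this enriched context the role played by fibrations over $F_k(M)$ is played precisely by objects of $\fpifc$.

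Verifying the equivalence then amounts to producing natural isomorphisms $\phisf \simeq F$ and $\psisg \simeq G$. The unit $G \to \psisg$ is an isomorphism because evaluating $\Phi_G$ at a multi-ball neighborhood $U_c$ of $c$ recovers $G(c)$ up to canonical isomorphism (the fundamental groupoid of $F_k(U_c)$ being contractible onto $c$). The counit $\phisf \to F$ is an isomorphism on multi-ball open sets by construction, and homogeneity of $F$ upgrades this to an isomorphism on all of $\om$.

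The \textbf{main obstacle} will be the descent of $\Psi_F$ to the fundamental groupoid: proving that path-homotopic paths in $F_k(M)$ induce the same morphism under $F$. This requires a careful interplay between the isotopy extension theorem and the very good property, most likely by reducing an arbitrary path-homotopy to a finite composite of elementary isotopies whose invariance under $F$ is manifest from the isotopy-equivalence axiom. A secondary challenge is the extension of Weiss's classification of homogeneous functors beyond the case of topological spaces to the general target $\mcalc$, which is needed to establish both homogeneity of $\Phi_G$ and the reconstruction of $F$ from its multi-ball values.
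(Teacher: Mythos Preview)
Your high-level strategy---build explicit functors $\Psi$ and $\Phi$ and verify they are quasi-inverse---matches the paper's, and you correctly identify the descent of $\Psi_F$ to homotopy classes of paths as the central difficulty. However, the paper takes a substantially different route to execute this, and one of your proposed shortcuts is a genuine gap.

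\textbf{Different route.} The paper does \emph{not} work directly with points and continuous paths in $F_k(M)$. Instead it factors the equivalence through a chain of intermediate categories: first it shows $\mcalf_k(\om;\mcalc)\simeq\mcalf(\bkmu;\mcalc)$ via right Kan extension (Theorem~\ref{equiv_cat_thm}), then reduces to $k=1$ on $F_k(M)$, then passes to a triangulation-based cover $\utm$ (Proposition~\ref{equiv_cat_vectn_prop}), and finally identifies $\mcalf(\utm;\mcalc)$ with $\fpic$ using the edge-path model of the fundamental groupoid (Theorem~\ref{crucial_thm}). The point of the triangulation is exactly to make your ``main obstacle'' tractable: homotopy invariance becomes the combinatorial relation on edge-paths, which is handled by a single diagram chase (Lemma~\ref{circlular_lem}). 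The analogue of your isotopy-extension argument is the machinery of \emph{admissible families} (Definition~\ref{admissible_def}) and the isomorphism $\iso(\mcalu_a,a,K,L)$, whose independence of all choices occupies Sections~\ref{isomorphism_isouaakl_subsection}--\ref{dependence_subsection}; this is considerably more delicate than ``reduce to elementary isotopies.''

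\textbf{Genuine gap.} Your plan to verify homogeneity of $\Phi_G$ by ``adapting Weiss's classification (Theorem~8.5 of \cite{wei99}) to $\mcalc$-valued functors'' is exactly what the paper avoids, and for good reason: Weiss's argument passes through geometric realizations of indexing categories, which live in spaces and have no analogue in a general $\mcalc$. The paper flags this explicitly (see the Remark following Lemma~\ref{very_good_lem}). Relatedly, your formula $\Phi_G(U)=\lim_{c\in\Pi(F_k(U))}G(c)$ does not obviously yield a homogeneous functor: on a single ball $U$, the groupoid $\Pi(F_k(U))$ is connected with nontrivial $\pi_1$, so the limit is the invariants of $G(c)$ under the braid group rather than $0$. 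The paper instead defines the extension in stages---equal to $0$ on $\mcalo_{k-1}(M)$ by fiat, then right Kan extended---and verifies very-goodness of the extension by the admissible-family machinery (Lemma~\ref{very_good_lem}), not by invoking Weiss.
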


This result has a strong consequence. When $F_k(M)$ is connected, the category $\fpifc$ turns out to be deeply related to a certain category of representations that we now recall.  Let $\repg$ denote the following category of representations of a group $G$ in $\mcalc$. An object of $\repg$ is a pair $(A, \rho)$ where $A$ is an object of $\mcalc$, and $\rho \colon G \lra \aut(A)$ is a homomorphism of groups. A morphism from $(A, \rho)$ to $(A', \rho')$ consists of a morphism $\varphi \colon A \lra A'$ in $\mcalc$ such that for all $x \in G$, $\varphi \rho(x) = \rho'(x)\varphi$. 

\begin{coro} \label{main_coro_paper}
 Let $\mcalc$ as in Theorem~\ref{main_thm_paper}. Assume that $F_k(M)$ is connected. Then the category $\mathcal{F}_k(\mathcal{O}(M); \mcalc)$ is equivalent to the category of representations of the fundamental group $\pio(F_k(M))$ in $\mcalc$. That is,
\begin{eqnarray} \label{fk_repif}
\mathcal{F}_k(\mathcal{O}(M); \mcalc) \simeq \repif.
\end{eqnarray}
\end{coro}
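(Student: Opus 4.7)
By Theorem~\ref{main_thm_paper}, it suffices to produce an equivalence of categories $\fpifc \simeq \repif$. The plan rests on a purely categorical observation: a connected groupoid is equivalent, as a category, to any of its vertex groups regarded as a one-object groupoid, and $\mcalc$-valued presheaves on such a one-object groupoid are essentially the same as representations of the vertex group in $\mcalc$.

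Concretely, since $F_k(M)$ is connected by hypothesis, fix a basepoint $x_0 \in F_k(M)$ and write $G := \pio(F_k(M), x_0)$, viewed as a one-object groupoid. The canonical inclusion $G \hra \Pi(F_k(M))$ is fully faithful (by definition of the fundamental group) and essentially surjective (by path-connectedness of $F_k(M)$), hence an equivalence of small categories. Restriction along this equivalence therefore induces an equivalence between $\fpifc$ and the category of contravariant functors from $G$ to $\mcalc$. Next I would identify this latter category with $\repif$: a contravariant functor $H$ from $G$ to $\mcalc$ is the data of an object $A := H(\ast) \in \mcalc$ together with an antihomomorphism $G \lra \aut(A)$, and postcomposing with the inversion $g \mapsto g^{-1}$ turns this into a bona fide homomorphism $\rho_H \colon G \lra \aut(A)$, giving an object $(A, \rho_H) \in \repif$. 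The same manipulation translates naturality of a transformation between two such functors into the intertwining condition defining a morphism in $\repif$, so the assignment $H \mapsto (H(\ast), \rho_H)$ is an isomorphism of categories. Composing with the equivalence supplied by Theorem~\ref{main_thm_paper} yields the desired equivalence~(\ref{fk_repif}).

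I expect no serious obstacle. The only subtleties are bookkeeping: ensuring that contravariance is correctly converted into the covariant homomorphism demanded by the paper's definition of $\repif$ (handled cleanly by the group isomorphism $G \cong G^{op}$ given by inversion), and invoking the standard fact that restriction along an equivalence of small categories induces an equivalence on functor categories valued in any fixed target $\mcalc$. Neither involves the manifold $M$ or the machinery of manifold calculus; all the geometric content of the corollary is concentrated in Theorem~\ref{main_thm_paper}.
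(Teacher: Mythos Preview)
Your proposal is correct and follows the same overall strategy as the paper: invoke Theorem~\ref{main_thm_paper} and then show that contravariant functors from the fundamental groupoid of a connected space into $\mcalc$ are equivalent to representations of its fundamental group. The paper isolates this second step as Proposition~\ref{fpim_rep_prop} and proves it concretely using the edge-path model of $\Pi(M)$ from Theorem~\ref{pim_thm}, choosing a maximal tree in the triangulation to build explicit inverse functors $\Theta$ and $\Lambda$. Your argument instead appeals directly to the standard categorical fact that a connected groupoid is equivalent to any of its vertex groups, together with the invariance of functor categories under equivalences of the source. Both routes are valid; yours is shorter and model-independent, while the paper's stays within the combinatorial framework it has already set up and yields explicit formulas for the equivalence.
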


\begin{rmk}
Let $G =\pio(F_k(M))$. If $\mcalc = \mcalr\text{-Mod}$, the category of modules over a ring $\mcalr$, then $\repg$ is nothing but the standard category $\mcalr[G]\text{-Mod}$ of modules over the group ring $\mcalr[G]$, that is, the category of representations of $G$ over $\mcalr$.  In that case (\ref{fk_repif}) becomes 
$
\mathcal{F}_k\left(\mathcal{O}(M); \mcalr\text{-Mod}\right) \simeq \mcalr[\pio(F_k(M))]\text{-Mod}. 
$
\end{rmk}

\sloppy
As a quick consequence of Corollary~\ref{main_coro_paper}, if $F_k(M)$ happens to be simply connected then the category $\mathcal{F}_k(\mathcal{O}(M); \mcalc)$  is equivalent to $\mcalc$. In particular the category $\mathcal{F}_1(\mathcal{O}(S^n); \mcalc)$ of very good linear functors  on the $n$-sphere is equivalent to $\mathcal{F}_1(\mcalo(\rbb^n); \mcalc)$ when $n \geq 2$.

We also prove Theorem~\ref{vgm_equivalence_thm}, which roughly states that the category of very good contravariant functors into finite dimensional vector spaces is equivalent to a nice subcategory of vector bundles, which we call \textit{very good vector bundles}(see Definition~\ref{vgvb_defn}). 
%Roughly speaking, a \textit{vgvb} over $M$ is a traditional vector bundle  endowed with an extra structure, which is nothing but a  very good covariant functor $F \colon \bou(M)  \lra \fvect$. This structure is required to satisfy an extra axiom, which is a kind of compatibility between local trivializations. 
We let  VGVB denote the category of such bundles, and we let VB denote the traditional category of vector bundles over $M$. By definition the category VGVB is a subcategory of VB. It is well known that the latter category is  not abelian as there is a technical issue with the existence of all kernels and cokernels.  

%By definition there is a forgetful functor from VGVB to VB, which is not injective on objects (see Remark~\ref{mobius_rmk}). 
%For instance, let $MB_{-1}$ (respectively $MB_{-2}$) be the Mobius bundle whose attaching function $\rbb \lra \rbb$ is the multiplication by $-1$ (respectively $-2$). As objects of VGVB, they are not isomorphic. But as objects of VB, they are. 
%Looking at (\ref{eqn2_overview}) and Theorem~\ref{vgm_equivalence_thm}, one natural question arises: is VGVB equivalent to the category of very good  homogeneous functors? This question has a negative answer in general since the category $\fvect$ does not have all small limits. We also prove the following result (see also Remark~\ref{abelian_rmk}).

\begin{thm} [Theorem~\ref{vbm_abelian_thm}] \label{vgvb_abelian_thm}
The category VGVB of very good vector bundles is abelian. 
\end{thm}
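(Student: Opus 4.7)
The plan is to reduce the statement to the standard fact that a functor category valued in an abelian category is abelian, via the chain of equivalences established earlier in the paper.

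First I would apply Theorem~\ref{vgm_equivalence_thm}, which identifies VGVB with a category of very good contravariant functors valued in $\fvect$ (finite-dimensional vector spaces over $\kbb$). The $k=1$ case of Theorem~\ref{main_thm_paper} then yields a further equivalence between this functor category and $\mcalf(\Pi(M); \fvect)$, since $F_1(M) = M$ and hence $\Pi(F_1(M)) = \Pi(M)$.

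Next I would invoke two standard categorical facts: (i) $\fvect$ is an abelian category; (ii) for any small category $\mcald$ and abelian category $\mcala$, the functor category $\mcalf(\mcald; \mcala)$ is abelian, with kernels, cokernels, and finite biproducts computed pointwise. Applied to $\mcald = \Pi(M)^{op}$ and $\mcala = \fvect$, this shows $\mcalf(\Pi(M); \fvect)$ is abelian. Since the property of being abelian is preserved under equivalence of categories, VGVB is abelian as well.

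The main obstacle is conceptual rather than technical: although VB is famously not abelian (kernels and cokernels of bundle maps can have jumping rank and therefore fail to be bundles), the very good condition precisely rules out this pathology, since very good bundles correspond to representations of $\Pi(M)$ in which kernels and cokernels are taken pointwise in $\fvect$ and automatically have locally constant rank on $M$. The chain of equivalences makes this rigorous, but it is worth unpacking the correspondence to verify concretely that the kernel and cokernel of a morphism of very good vector bundles really do exist in VGVB and correspond to the pointwise kernel and cokernel of the associated natural transformation of functors on $\Pi(M)$.
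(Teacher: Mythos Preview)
Your approach is valid but takes a genuinely different route from the paper, and there is one technical wrinkle to address.

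The paper proves Theorem~\ref{vbm_abelian_thm} by a direct verification of the abelian-category axioms inside $\vgvb$: it exhibits the zero object, builds binary (bi)products as fibre products, and constructs kernels and cokernels fibrewise. The key ingredient is Lemma~\ref{ker_coker_lem}, which shows that for a morphism $(f,\eta)$ of very good bundles the fibrewise kernels $\ker f_x$ are all isomorphic to $\ker \eta[U]$ (and similarly for cokernels), so the usual rank-jumping obstruction present in ordinary VB disappears. In the paper's logical order this theorem is established \emph{before} Theorem~\ref{vgm_equivalence_thm}, so the argument is self-contained.

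Your route---transfer to a functor category via Theorem~\ref{vgm_equivalence_thm} and then quote the standard fact that functor categories into abelian categories are abelian---is more conceptual and perfectly legitimate, since the proof of Theorem~\ref{vgm_equivalence_thm} does not use Theorem~\ref{vbm_abelian_thm} and hence there is no circularity. The wrinkle is that Theorem~\ref{main_thm_paper} (and the underlying Theorem~\ref{equiv_cat_thm}) assumes $\mcalc$ has \emph{all small limits}, which $\fvect$ does not: an infinite product of nonzero finite-dimensional spaces is infinite-dimensional. You can patch this in either of two ways. One is to note that the steps you actually need, Proposition~\ref{equiv_cat_vectn_prop} and Theorem~\ref{crucial_thm}, hold for an arbitrary $\mcalc$, provided you take the basis $\mcalb(M)=\butm$ as in Remark~\ref{bofm_butm}. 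The other, simpler, fix is to skip the passage to $\Pi(M)$ altogether and argue directly that $\vgm$ is abelian as a full subcategory of the category of all functors $\bomu \to \fvect$ that is closed under pointwise kernels, cokernels, and biproducts (if $F$ and $G$ send isotopy equivalences to isomorphisms, then so do $\ker\eta$, $\coker\eta$, and $F\oplus G$). With either patch your argument goes through, and it has the merit of making transparent \emph{why} the very-good condition repairs the failure of abelianness in VB, exactly as you observe in your final paragraph.
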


We are working on a project that consists of studying polynomial functors $F \colon \om \lra \text{Ch}_*$ into chain complexes in the setting of triangulated categories \cite{nee01}. Let $\mcalp$ denote the category of such functors, and let $\mcalh \subset \mcalp$ denote the category of homogeneous functors. It turns out  that \cite{paul_don17-4} the associated ``derived'' categories, denoted $\mcald\mcalp$ and $\mcald\mcalh$, are triangulated categories. If $F$ is polynomial of degree $\leq 2$, then it fits into the triangle  $L_2F \lra F \lra T_1F$, where $L_2F$ and $T_1F$ are indeed objects of $\mcald\mcalh$. So by induction on $k$, one can show that every object of $\mcald\mcalp$ can be written as extension of objects of $\mcald\mcalh$. This reduces the study of polynomial functors to the study of homogeneous functors. Moreover, one can show that \cite{paul_don17-4}  the category $\mcald\mcalh$ is generated, in the triangulated categorical language, by the category VGHF of very good homogeneous functors. This explains why we have started our project by an investigation of $\mathcal{F}_k(\mathcal{O}(M); \mcalc)$.

Our Theorem~\ref{vgm_equivalence_thm} might be helpful to establish a connection between homogeneous functors and sheaves. More precisely,   let $\mcals$ denote the category of sheaves on $M$ with values in (finite) dimensional vector spaces, and let $\mcald \mcals$ denote its ``derived'' category, which is a triangulated category. As mentioned above, one has $\text{VGVB} \subseteq \text{VB}$. It is well known that the the natural functor from VB to $\mcals$ turns out to be an embedding functor. So, by using the obvious inclusion functor $\mcals \subseteq \mcald \mcals$, the category VGVB can be viewed as a subcategory of $\mcald \mcals$. We claim that $\mcald \mcals$ is generated by VGVB. If this is true, then one natural question arises:   since the category $\mcald \mcalh$ is generated by VGHF \cite{paul_don17-4},  and since $\text{VGHF} \simeq \text{VGVB}$ by Theorem~\ref{vgm_equivalence_thm}, one may ask the question to know whether the categories $\mcald \mcals$ and  $\mcald \mcalh$ are equivalent or how they are related. That question is interesting and will be addressed in \cite{paul_don17-4} as well as the claim of course.

\subsection{Overview of the proof of Theorem~\ref{main_thm_paper}}

We first need some notation. For a subposet $\mcala \subseteq \om$, we let $\mcalf(\mcala; \mcalc)$ denote the category of very good contravariant functors $F \colon \mcala \lra \mcalc$. We let $\okm$denote the subposet of $\om$ consisting of open subsets diffeomorphic to the disjoint union of at most $k$ open balls. Let $\mcalb(M)$ be a basis (consisting of open subsets diffeomorphic to a ball) for the topology of $M$. We let $\mcalb^{(k)}(M) \subseteq \om$ denote the subposet whose objects are exactly the disjoint union of $k$ elements of $\mcalb(M)$, and whose morphisms are isotopy equivalences.

The proof goes through three steps in which all of our constructions  are explicit.  

(1) The first thing we need is Theorem~\ref{equiv_cat_thm}, which states that any very good homogeneous functor $F \colon \om \lra \mcalc$ of degree $k$ is determined by its values on $\bkmu$. More precisely, the category $\mcalf_k(\om; \mcalc)$ is equivalent to $\mcalf(\bkmu; \mcalc)$. That is,
\begin{eqnarray} \label{eqn1_overview}
\mcalf_k\left(\om; \mcalc\right) \simeq \mcalf\left(\bkmu; \mcalc\right). 
\end{eqnarray}
%A \lq\lq topological version\rq\rq{} of (\ref{eqn1_overview}) was obtained by Pryor in \cite{pryor15} for polynomial good (see \cite[Section 1]{wei99} for the notion of goodness) contravariant functors $F \colon \om \lra \text{Top}$. In fact, Weiss \cite[Theorem 4.1]{wei99} shows that  any polynomial functor of degree $\leq k$ is characterized by its restriction to $\okm$. Using essentially the same techniques, Pryor generalizes this result by showing that one can replace $\okm$ by a smaller subposet $\mcalb_k(M) \subseteq \okm$, consisting of open subsets diffeomorphic to the disjoint union of at most $k$ elements of $\mcalb(M)$, and still recover the same notion of polynomial functor. 
To prove (\ref{eqn1_overview}), we essentially use the right Kan extension functor $\text{Ran}_i(-)$ along the inclusion $i \colon \bkmu \hra \om$, and show that it is the \lq\lq inverse\rq\rq{} for the restriction functor. One of the key points is to prove that $\text{Ran}_i(-)$ preserves the very goodness property. To do this, 
%we take a very different approach than that of Weiss-Pryor. Our idea is as follows. 
%we proceed as follows. Let $F \colon \bkmu \lra \mcalc$ be very good. Given two  objects $B, B' \in \bkmu$  that are isotopic via an isotopy $L$, we construct an isomorphism  from $F(B')$ to $F(B)$, and show that it is independent of the choice of $L$. To formalize this 
we introduce the concept of \textit{admissible family} of open balls (see Definition~\ref{admissible_def}) \footnote{In the follow up paper \cite[Theorem 1.4]{paul_don17-2}, we use that concept to prove a certain result about the homotopy right Kan extension of good functors.}. As an example, the family $\{B, B_1, A, B_2, B'\}$ from Figure~\ref{adm_family} is admissible. Associated with that family is the isomorphism
\begin{eqnarray} \label{iso_intro}
(F(i_1))^{-1} F(i_2)(F(i_3))^{-1} F(i_4) \colon F(B') \lra F(B),
\end{eqnarray} 
where $i_1, i_2, i_3$, and $i_4$ fit into the poset $\xymatrix{B & B_1 \ar[l]_-{i_1} \ar[r]^-{i_2}  & A  &  B_2   \ar[l]_-{i_3} \ar[r]^-{i_4} & B'}$ of inclusions. 

\begin{figure}[ht!]
\centering
\includegraphics[scale =0.9]{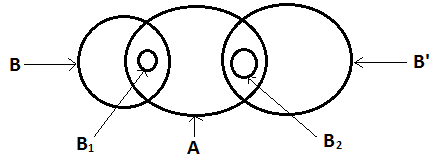}
\caption{An example of an admissible family of open balls (here $k=1$)}  \label{adm_family}
\end{figure}

Isomorphisms like (\ref{iso_intro}) play a crucial role here.   Along the way we use the fact that the category $\mcalc$ has a zero object and all small limits. Note that those requirements about $\mcalc$  are used only in this step.

Equation (\ref{eqn1_overview}) has two nice consequences. If $\bou(F_k(M)) \subseteq \mcalo(F_k(M))$ denotes the subposet whose objects are exactly the product of $k$ elements of $\mcalb(M)$, then $\bou(F_k(M)) \cong \bkmu$. And therefore (\ref{eqn1_overview}) becomes 
\begin{eqnarray} \label{eqn2_overview}
\mcalf_k\left(\om; \mcalc\right) \simeq \mcalf\left(\bou(F_k(M)); \mcalc\right). 
\end{eqnarray}
This latter equation and (\ref{eqn1_overview}) imply that the category of very good homogeneous functors of degree $k$ is equivalent to the category of linear functors $\mcalo(F_k(M)) \lra \mcalc$. (In the follow up paper \cite[Theorem 1.3]{paul_don17-2} we show that the same result holds for good homogeneous functors.) So it is enough to work with $k =1$. The second consequence, which will be used in the next step, is the fact that the righthand side of (\ref{eqn2_overview}) does not depend on the choice of the basis $\mcalb(M)$ for the topology of $M$.

(2) Let $\tm$ be a triangulation of $M$, that is, a simplicial complex homeomorphic to $M$. By first taking two barycentric subdivisions of $\tm$, and then the interior $U_{\sigma}$ of the star of each simplex $\sigma$, we obtain a subposet $\utm:= \{U_{\sigma}\}_{\sigma \in \tm} \subseteq \om$, which covers  $M$. Such a poset is a \textit{very good cover} (see Definition~\ref{gc_defn} and Proposition~\ref{vg_cover_prop}).  It is in particular what Weiss calls a \textit{good $1$-cover}. To any very good cover, one can associate a basis $\mcalb_{\utm}$ for the topology of $M$:
\[
\mcalb_{\utm} = \left\{B \text{ diffeomorphic to an open ball}| \ B \subseteq U_{\sigma} \text{ for some $U_{\sigma} \in \utm$}\right\}. 
\]  
Taking $\bou(M) := \mcalb_{\utm}$, the second thing we need in proving Theorem~\ref{main_thm_paper} is  the following equivalence (Proposition~\ref{equiv_cat_vectn_prop})
\begin{eqnarray} \label{eqn3_overview}
\mcalf\left(\bou(M); \mcalc\right) \simeq \mcalf\left(\utm; \mcalc\right).
\end{eqnarray}
The key point in the proof of (\ref{eqn3_overview}) is the fact that the poset $\utm$ turns out to be a very good cover of $M$. 
%where $\bou(M)$ is a distinguished basis for the topology of $M$, and $\utm \subseteq \om$ is the following subposet. Let $\tm$ be a triangulation of $M$, and let $S(\tm)$ be the set of simplicies of $\tm$. The poset $\utm$ is a collection $\{U_{\sigma}\}_{\sigma \in S(\tm)}$ of open subsets diffeomorphic to a  ball whose each $U_{\sigma}$ is a \lq\lq suitable thickening\rq\rq{}  of $\sigma$ in such a way that the intersection $U_{\sigma} \cap U_{\lambda}$ is either the emptyset or $U_{\sigma \cap \lambda}$. Such a poset is called \textit{very good cover} (see Definition~\ref{gc_defn}). A very good cover is in particular what Weiss calls a \textit{good $1$-cover}. To any very good cover, one can associate a basis $\mcalb_{\utm}$ for the topology of $M$:
%\[
%\mcalb_{\utm} = \left\{B \text{ diffeomorphic to an open ball}| \ B \subseteq U_{\sigma} \text{ for some $U_{\sigma} \in \utm$}\right\}. 
%\] 
%In fact, in the equation (\ref{eqn3_overview}), the poset $\bou(M)$ is nothing but $\mcalb_{\utm}$. 

(3) Lastly, we need the following equivalence of categories (Theorem~\ref{crucial_thm})
\begin{eqnarray} \label{eqn4_overview}
\mcalf\left(\utm; \mcalc\right) \simeq \fpic.
\end{eqnarray}
To prove (\ref{eqn4_overview}), we first construct a functor $\Psi \colon \mcalf\left(\utm; \mcalc\right) \lra \fpic$
as follows. First of all, it is well known that the fundamental groupoid $\Pi(M)$ can be viewed as the category whose objects are vertices of $\tm$, and whose morphisms are homotopy classes of \textit{edge-paths}. (Recall that an \textit{edge-path}  is a chain $f = (v_0, \cdots, v_r)$ of vertices connected by edges in $\tm$.) Now define  $\Psi(F)(v) := F(U_{\lv})$. On morphisms $f$ of $\Pi(M)$, we explain the idea of the definition of $\Psi(F)$ through the following example.  Consider  the triangulation of $M=S^1$ with $three$ vertices ($v_0, v_1$, and $v_2$) and three edges ($\langle v_0v_1\rangle, \langle v_1v_2 \rangle$, and $\langle v_0v_2 \rangle$).  Let $f = (v_0, v_1, v_2)$ be an edge-path from $v_0$ to $v_2$.  
%\begin{figure}[ht!]
%\centering
%\includegraphics[scale =0.9]{triangle}
%\caption{An example of $\tm$}  \label{triangle}
%\end{figure}
Associated with $f$ is the natural poset $\mcalu_f$ as shown (\ref{poset_f}). 
\begin{eqnarray} \label{poset_f}
\mcalu_f = \left\{\xymatrix{U_{\vz} \ar[r]^-{i_1}  & U_{\vzo}  & U_{\vo} \ar[l]_-{i_2} \ar[r]^-{i_3} & U_{\vot} & U_{\vt} \ar[l]_-{i_4} }  \right\}.
\end{eqnarray}
The isomorphism  $\Psi(F)(f) \colon F(U_{\vz}) \lla F(U_{\vt})$ is then defined as the \lq\lq composition along $\mcalu_f$\rq\rq{}. That is, 
$
\Psi(F)(f) := F(i_1)(F(i_2))^{-1}F(i_3)(F(i_4))^{-1}. 
$
On morphisms $\eta \colon F \lra F'$ of $\mcalf\left(\utm; \mcalc\right)$,  we define $\Psi$ as the component of $\eta$ at $U_{\lv}$. 

We also construct a functor $\Phi \colon \fpic \lra \mcalf\left(\utm; \mcalc\right)$, and show that it is the \lq\lq inverse\rq\rq{} for $\Psi$. Given $G \in \fpic$, the idea of the construction of $\Phi (G)$ is to proceed by induction on the skeletons of $\tm$ (see Subsection~\ref{construction_phi_subsection}). %The starting point is to define $\Phi$  on the $1$-skeleton, and then extend the definition on the rest of the triangulation. 
%A good summary of this construction is provided just after the proof of Proposition~\ref{extension_prop}. 

Combining now  (\ref{eqn2_overview}), (\ref{eqn3_overview}), and (\ref{eqn4_overview}), and after replacing $M$ by $F_k(M)$ in   (\ref{eqn3_overview}) and (\ref{eqn4_overview}), we deduce Theorem~\ref{main_thm_paper}.

\subsection{Outline of the paper}

In Section~\ref{notation_section} we fix some notation.  

In Section~\ref{vghf_section} we first define some basic concepts. Next we introduce the notion of \textit{admissible family} in Subsection~\ref{admissible_family_subsection}. The next  subsection defines an important isomorphism out of an isotopy and some other data. A typical example of that isomorphism is given by (\ref{iso_intro}). We show in Subsection~\ref{dependence_subsection} that it does not depend on the choice of the isotopy. Lastly, 
we prove (\ref{eqn1_overview}) or Theorem~\ref{equiv_cat_thm}   at the end of Subsection~\ref{equiv_cat_thm_proof_subsection}. 

In Section~\ref{vgf_section} we first prove (\ref{eqn3_overview}) or Proposition~\ref{equiv_cat_vectn_prop}. Next, in Subsection~\ref{construction_psi_subsection}, we contruct  the functor $\Psi$, while the functor $\Phi$  is constructed in Subsection~\ref{construction_phi_subsection} as mentioned before. In the last subsection, we  prove (\ref{eqn4_overview}) or Theorem~\ref{crucial_thm}, and  Theorem~\ref{main_thm_paper}. 

In Section~\ref{vghf_rep_section} we first prove Proposition~\ref{fpim_rep_prop}, which says that the category $\fpic$ is equivalent to the category of representations of $\pio(M)$ in $\mcalc$ provided that $M$ is connected. Then, combining Theorem~\ref{main_thm_paper} and Proposition~\ref{fpim_rep_prop}, we deduce Corollary~\ref{main_coro_paper}. 

In  Section~\ref{vgvb_section} we first introduce the notion of \textit{very good vector bundles} (vgvb), and provide two examples. In the next subsection, we prove Theorem~\ref{vgvb_abelian_thm} or more precisely  Theorem~\ref{vbm_abelian_thm}. In Subsection~\ref{vgvb_equivalence_subsection}, we prove Theorem~\ref{vgm_equivalence_thm}, which says that the category of vgvb is equivalent to the category of very good contravariant functors into finite dimensional vector spaces.

\textbf{Acknowledgements.} This work has been supported by  Pacific Institute for the Mathematical Sciences (PIMS) and the University of Regina, that the authors acknowledge.

\section{Setup of notation}     \label{notation_section}

In this section we fix some notation.  

$\bullet$ A smooth manifold $M$ will be fixed. As in Weiss's work \cite{wei99}, we let $\om$ denote the poset of open subsets of $M$, morphisms being inclusions. 
%An \textit{open ball} in $M$ means an open subset diffeomorphic to the unit open ball of $\rbb^n$, where $n$ stands for the dimension of $M$.  

$\bullet$ For $k \geq 0$,  we let $\mathcal{O}_k(M) \subseteq \om$ denote  the full subcategory of $\om$ whose objects are open subsets diffeomorphic to the disjoint union of at most $k$ balls. 

$\bullet$ For subsets $A$ and $B$ of $M$ such that $A \subseteq B$, we let $AB \colon A \hra B$ denote the inclusion map. 

$\bullet$ Given two objects $U, V \in \om$ such that $U \subseteq V$,  we  use the notation $U \ssie V$ to mean that the inclusion of $U$ inside $V$ is an isotopy equivalence.   

%We let $I =[0, 1]$.

$\bullet$ We let $F_k(M)$ denote the unordered configuration space of $k$ points in $M$. 

$\bullet$ We let $\mcalt^M$ denote a triangulation of $M$ with the maximal tree denoted $m\mcalt^M$. We write $\mcalt^M_p$ for the $p$-skeleton of $M$. %Recall that a \textit{triangulation} of $M$ is a simplicial complex homeomorphic to $M$. For the computation of the fundamental group $\pio(M) \cong \pio(\tm)$, we take the basepoint, $v$, to be a vertex of $\tm$. 

$\bullet$ An $r$-simplex of $\tm$ generated by vertices $v_0, \cdots, v_r$ is denoted $\langle v_0 \cdots v_r\rangle$. If $r =0$, we will sometimes write $v_0$ for $\langle v_0\rangle$. 

$\bullet$ We let $\fvect$ denote the category of finite dimensional vector spaces over a field $\kbb$. 
 
$\bullet$ Our functors are  contravariant unless stated otherwise.

$\bullet$ If  $\beta \colon F \lra G$ is a natural transformation, we denote by $\beta[A] \colon F(A) \lra G(A)$  the component of $\beta$ at $A$.  

$\bullet$ If $F$ and $G$ are two functors, we will use the notation $F \cong G$ to mean that $F$ is naturally isomorphic to $G$. 

$\bullet$ A category $\mcalc$ that has a zero object, denoted $0$, and all small limits will be fixed. 

$\bullet$ If $\mcala \subseteq \om$ is a subposet, we let $\mcalf(\mcala; \mcalc)$ denote the category of \textit{very good} contravariant functors (see Definition~\ref{verygood_defn}) from $\mcala$ to $\mcalc$. 

$\bullet$ We use the notation $\mcala \simeq \mcalb$ to mean that a category $\mcala$ is equivalent to another category $\mcalb$.

%We use the notation $x := \text{def}$ to state that the left hand side is defined by the right
%hand side.   

\section{Characterization of very good homogeneous functors} \label{vghf_section}

%The philosophy of the manifold calculus, due to Goodwillie and Weiss \cite{good_weiss99, wei99}, is to take a \lq\lq good\rq\rq{} contravariant functor $F \colon \om \lra \text{Top}$ (from the poset $\om$ of open subsets of $M$ to spaces) and replace it by its Taylor tower $\{T_k(F)\}_{k \geq 0}$, where each $T_k(F)$ is the polynomial approximation of $F$ of degree $\leq k$. So the idea is to compute $F$ by its successive polynomial approximations.  

In this section, we show that similar results to those of Weiss-Pryor \cite{wei99, pryor15} hold for very good homogeneous functors (see Definition~\ref{verygood_defn} below). Specifically, we prove Theorem~\ref{equiv_cat_thm}, which states that very good homogeneous functors of degree $k$ are determined by their restriction to the subposet $\mcalb^{(k)}(M) \subseteq \om$ (see Definition~\ref{bku_oku_defn} below).  
%whose objects are exactly the disjoint union $k$ elements of $\mcalb(M)$, morphisms being  isotopy equivalences. 
As a consequence, we prove Corollary~\ref{equiv_cat_coro}, which states that the category of very good homogeneous functors  $\om \lra \mcalc$ of degree $k$ is equivalent to the category of linear functors $\mcalo(F_k(M)) \lra \mcalc$. 

\subsection{Definition of basic concepts}  \label{definition_subsection}

\sloppy

We define the notion of \textit{very good functor} and that of \textit{very good homogeneous functor} of degree $k$. We  also state the main result, Theorem~\ref{equiv_cat_thm}, of the section.

\begin{defn} \label{isotopy_defn}
\begin{enumerate}
\item[(i)] Let $i  \colon U  \hookrightarrow W$ and $i' \colon U' \hra W$ be morphisms of $\om$. We say that $i$ is \emph{isotopic} to $i'$ (or that $U$ is \emph{isotopic} to $U'$) if there exists a continuous map $L \colon U \times [0, 1] \lra W,$  $(x, t) \mapsto L_t(x)$ satisfying the following three conditions: (a) $L_0 = i$, (b) $L_1(U) = U'$, and (c) for all $t$, $L_t \colon U \lra W$ is a smooth embedding. Such a map $L$ is called an \emph{isotopy from $U$ to $U'$}. 
  %\begin{enumerate}
%\item[(a)] $L_0 = i$;
%\item[(b)] $L_1(U) = U'$;
%\item[(c)] for all $t$, $L_t \colon U \lra W$ is a smooth embedding.
%\end{enumerate}
%Such a map $L$ is called an \emph{isotopy from $U$ to $U'$}. 
\item[(ii)]  An inclusion $i \colon U \hra W$ in $M$  is said to be an \emph{isotopy equivalence}, and we denote $U \ssie W$, if $i$ is isotopic to the identity $id \colon W \hra W$. 
\end{enumerate}
\end{defn}

The following  well known  result will be extensively used in this paper.    

\begin{prop}\cite[Chapter 8]{hirsch76}  \label{isotopy_equiv_prop} (i) If  $i  \colon U  \hookrightarrow W$ is isotopic to $i' \colon U' \hra W$, then $\pi_0(U) \cong \pi_0(U')$. 
(ii) Let $U, W \in \om$ be diffeomorphic to a disjoint union of open balls such that $U \subseteq W$. Then the inclusion map  $i \colon U \hra W$ is an isotopy equivalence if and only if the induced map $\pi_0(i)$ is an isomorphism. 
\end{prop}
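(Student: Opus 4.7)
The plan is as follows. Part (i) is essentially formal from the definition of isotopy, the forward direction of part (ii) reduces immediately to part (i), and only the backward direction of part (ii) requires real geometric input.

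For part (i), I would observe that an isotopy $L \colon U \times [0,1] \lra W$ with $L_0 = i$ and $L_1(U) = U'$ furnishes, at time $1$, a smooth embedding $L_1 \colon U \lra W$ whose image is exactly $U'$. Since a smooth embedding is a diffeomorphism onto its image, $L_1$ restricts to a diffeomorphism $U \lra U'$, which in particular induces an isomorphism on $\pi_0$. For the forward direction of part (ii), if $i$ is an isotopy equivalence then there is an isotopy $L$ with $L_0 = i$ and $L_1(U) = W$; part (i) supplies an isomorphism $\pi_0(L_1) \colon \pi_0(U) \srel \pi_0(W)$, and since $L$ is simultaneously a homotopy from $i$ to $L_1$ one has $\pi_0(i) = \pi_0(L_1)$, so $\pi_0(i)$ is an isomorphism.

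For the backward direction of part (ii), I would decompose $W = \bigsqcup_{j=1}^{n} W_j$ into its connected open-ball components. Bijectivity of $\pi_0(i)$ forces each $W_j$ to contain exactly one connected component $U_j$ of $U$, itself an open ball, so after relabeling we have inclusions $U_j \subseteq W_j$ of single open balls, and the required global isotopy may be assembled component by component. The problem thus reduces to showing that an inclusion $U_j \hra W_j$ of an open ball into an open ball is isotopic to a diffeomorphism $U_j \lra W_j$. Fixing diffeomorphisms $U_j \cong W_j \cong \rdbb$, this becomes the classical statement that any orientation-preserving smooth embedding $\rdbb \lra \rdbb$ is isotopic through smooth embeddings to the identity; I would produce such an isotopy by first interpolating the embedding to its derivative at the origin and then connecting that derivative to the identity through the path-connected group $\mathrm{GL}^+(d)$, verifying at each stage that the family remains a smooth embedding.

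The main obstacle is precisely this final expansion step. While the picture of smoothly stretching an embedded open ball to fill its ambient ball is intuitively transparent, producing the family with uniform smooth control on all parameters is exactly the content of the nontrivial isotopy extension machinery developed in \cite[Chapter 8]{hirsch76}, which is why the authors cite the result rather than reproving it from scratch.
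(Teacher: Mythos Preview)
The paper does not actually prove this proposition: it is stated with a citation to \cite[Chapter 8]{hirsch76} and used as a black box throughout. So there is no ``paper's own proof'' to compare against; your sketch is already more than the authors supply.

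Your outline is sound. Part (i) and the forward implication in (ii) are correct as written. For the backward implication your reduction to a single pair $U_j \subseteq W_j$ of balls is right, and the linearization argument (isotope an open embedding $g \colon \rbb^d \hra \rbb^d$ with $g(0)=0$ to $Dg(0)$ via $H_t(x) = g(tx)/t$) is the standard, and correct, mechanism. Two small corrections are worth making. First, the orientation discussion is unnecessary: the definition of isotopy equivalence here only requires $L_1(U) = W$, i.e.\ that $L_1$ be a diffeomorphism onto $W$, not that $L_1$ be the identity; the linearization $Dg(0) \in \mathrm{GL}(d)$ is already such a diffeomorphism, so the second step through $\mathrm{GL}^+(d)$ is not needed and the argument works regardless of orientation. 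Second, the relevant input from Hirsch is less the isotopy \emph{extension} theorem and more the elementary shrinking/linearization of embeddings of open balls; isotopy extension (producing ambient diffeotopies from isotopies of compact submanifolds) is a stronger and different statement than what you use.
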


\begin{defn} \label{verygood_defn} 
For a subcategory $\mcala \subseteq \om$,  a contravariant functor  $F \colon \mcala \lra \mcalc$ is called  \emph{very good} if it sends isotopy equivalences to isomorphisms. 
 \end{defn}

\begin{defn} \label{polynomial_defn}
Let $F \colon \om \lra \mcalc$ be a contravariant functor. 
\begin{enumerate} 
\item[(i)] We say that $F$ is \emph{polynomial of degree $\leq k$} if it is determined by its values on $\okm$. That is, for any $U \in \om$, 
$
F(U) \cong \emph{lim}_{V \in \mathcal{O}_k(U)} F(V). 
$
%for any $V$ in $\om$, for any $A_0, A_1, \cdots, A_k \subset V$ pairwise disjoint nonempty closed subsets, the natural map $F(V) \lra \emph{lim}_{S \neq \emptyset} F(V \setminus \cup_{i \in S} A_i)$ is an isomorphism ($S$ runs over the power set of $\{0, 1, \cdots, k\}$). 
\item[(ii)]  The $k$th \emph{polynomial approximation} of $F$, denoted $T_kF$, is the contravariant functor $T_kF \colon \om \lra \mcalc$ defined as 
$T_kF(U) = \emph{lim}_{V \in \mathcal{O}_k(U)} F(V).$  
\end{enumerate}     
\end{defn}

\begin{defn} \label{homoge_funct_defn}
 (i) A \emph{very good homogeneous functor of degree $k$} is a contravariant functor $F \colon \om \lra \mcalc$ that satisfies the following three conditions. (a) $F$ is very good; (b) $F$ is polynomial of degree $\leq k$; (c) $T_{k-1}F(U)$ is isomorphic to $0$ for all $U$. 
(ii) A \emph{linear functor} is a homogeneous functor of degree $1$. 			
\end{defn}

 For the next definition, we let $\mcalb(M)$ denote a basis, that consists of open subsets diffeomorphic to a ball, for the topology of $M$.

\begin{defn} \label{bku_oku_defn} 
\begin{enumerate}
\item[(i)] Define $\mcalb^{(k)}(M), k \geq 0,$ to be the subcategory of $\om$ whose objects are exactly the disjoint union of  $k$ objects of $\mcalb(M)$, and whose morphisms are isotopy equivalences. 
\item[(ii)] Define $\mcalo^{(k)}(M), k \geq 0,$ to be the subcategory of $\om$ whose  objects are diffeomorphic to the disjoint union of exactly $k$ open balls,  morphisms being isotopy equivalences. 
\end{enumerate}
\end{defn}

By definition $\mcalb^{(k)}(M)$ is a full subcategory of $\mcalo^{(k)}(M)$. To point out the difference between the objects of $\mcalb^{(k)}(M)$ and of $\mcalo^{(k)}(M)$, let us consider the following example. Take $M$ to be a smooth codimension zero submanifold of $\rbb^N$, and take $\mcalb(M)$ to be the subsets of $M$ consisting of open balls (with respect to the euclidean metric). Then an object of $\mcalb^{(k)}(M)$ is exactly the disjoint union of $k$ genuine open balls, while an object of $\mcalo^{(k)}(M)$ is diffeomorphic to the disjoint union of $k$ open balls.  

\begin{defn} \label{cat_vghf_defn}
Define $\fkov$  as the category of very good homogeneous functors $\om \lra \mcalc$ of degree $k$. Also define $\fbkv$ to be the category of very good  functors $\mcalb^{(k)}(M) \lra \mcalc$.
\end{defn}
 
%The following is the main result of the whole section.   

\begin{thm} \label{equiv_cat_thm} Let $\mcalc$ be a category that has a zero object and all small limits.  Then
the category $\fkov$ of very good homogeneous functors of degree $k$ is equivalent to the category $\fbkv$. That is,
$
 \fkov \simeq \fbkv.
$
\end{thm}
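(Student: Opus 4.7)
The plan is to construct the equivalence via a restriction--extension adjunction whose unit and counit are shown to be isomorphisms. In one direction, the restriction functor $R \colon \fkov \to \fbkv$ is immediate: for $F \in \fkov$, the functor $R(F) := F|_{\bkmu}$ is very good since $F$ sends isotopy equivalences to isomorphisms and every morphism of $\bkmu$ is an isotopy equivalence. In the other direction, I would define $E \colon \fbkv \to \fkov$ as (a version of) the right Kan extension of $G \in \fbkv$ along $i \colon \bkmu \hra \om$. Concretely, for $U \in \om$, set
\[
E(G)(U) \;=\; \lim_{\substack{V \in \bkmu \\ V \subseteq U}} G(V),
\]
where the indexing poset uses inclusions in $\om$ between those elements of $\bkmu$ lying inside $U$. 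This definition only sees disjoint unions of exactly $k$ open balls of the fixed basis $\mcalb(M)$ inside $U$; elements of $\okm$ with strictly fewer components contribute the zero object of $\mcalc$, which is how the hypothesis that $\mcalc$ has a zero object enters.

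The principal obstacle is verifying that $E(G)$ actually lands in $\fkov$, which requires three checks: polynomiality of degree $\leq k$, very-goodness, and triviality of $T_{k-1}E(G)$. Polynomiality is built in, since $E(G)(U)$ depends only on small enough open balls inside $U$, so $E(G)(U) \cong \lim_{W \in \okm(U)} E(G)(W)$. Triviality of $T_{k-1}E(G)$ follows because for $W \in \mathcal{O}_{k-1}(M)$ the indexing poset contains only disjoint unions of $k$ balls forced to collide, and the zero object of $\mcalc$ causes the limit to collapse. The genuinely hard step is very-goodness: given an isotopy equivalence $U \ssie U'$ in $\om$, one must produce an inverse to the restriction map $E(G)(U') \to E(G)(U)$. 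This is where the machinery of \emph{admissible families} becomes essential. Given a ball $B' \in \bkmu$ with $B' \subseteq U'$, one uses an isotopy realising $U \ssie U'$ to produce an admissible family as in Figure~\ref{adm_family} linking $B'$ to some $B \subseteq U$, and then the zig-zag isomorphism of the form~(\ref{iso_intro}) transports $G(B')$ to $G(B)$. One must then check this is independent of the choice of isotopy and of the intermediate balls (the content promised for Subsection~\ref{dependence_subsection}) and that it is natural in $B'$, so that the transports assemble into the required isomorphism on the limits.

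Once $E$ is shown to factor through $\fkov$, establishing the two natural isomorphisms $R E \cong \id_{\fbkv}$ and $E R \cong \id_{\fkov}$ is comparatively routine. For $R E \cong \id$: if $B \in \bkmu$, then $B$ itself is a terminal object of the poset defining $E(G)(B)$, so the limit evaluates to $G(B)$ naturally. For $E R \cong \id$: if $F \in \fkov$, polynomiality of $F$ gives $F(U) \cong \lim_{V \in \okm(U)} F(V)$, and the very-goodness of $F$ together with the fact that every $V \in \okm(U)$ contains, up to isotopy equivalence, an element of $\bkmu$ (again invoking admissible families on each connected component) lets one cofinally replace the indexing poset by the one defining $E(R(F))(U)$, yielding a natural isomorphism $E(R(F))(U) \cong F(U)$. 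The bulk of the technical work therefore lies in the very-goodness step, whose proof rests entirely on the admissible-family machinery and the isotopy-independence result developed earlier in the section.
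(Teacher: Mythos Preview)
Your overall architecture (restriction plus right Kan extension, with the admissible-family machinery supplying very-goodness) is the paper's approach, but the direct extension $E(G)(U)=\lim_{V\in\bkmu,\,V\subseteq U}G(V)$ has a genuine gap that the paper's proof is designed to avoid.

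The problem is that $\bkmu$ is \emph{not} a full subposet of $\om$: its morphisms are only the isotopy-equivalence inclusions. Hence for $k\geq 2$ the indexing category $\{V\in\bkmu:V\subseteq U\}$ is typically disconnected, and your claim that ``$B$ itself is a terminal object'' fails. For instance, with $k=2$ and $U=B_1\sqcup B_2\in\bkmu$, take $V=B_1'\sqcup B_1''$ with both balls inside $B_1$; then $V\subseteq U$ but $V\hra U$ is not an isotopy equivalence, so there is no morphism $V\to U$ in $\bkmu$. The limit therefore picks up an unwanted product factor $G(V)$, and $RE\not\cong\id$. The same example shows your argument for $T_{k-1}E(G)=0$ breaks: for $W\in\mathcal{O}_{k-1}(M)$ the indexing poset is nonempty (it contains such ``colliding'' $V$'s), and $G(V)$ is not zero, so the limit does not collapse.

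The paper fixes exactly this by factoring the extension into three steps,
\[
\mcalf(\bkmu;\mcalc)\;\simeq\;\mcalf(\okmu;\mcalc)\;\simeq\;\mcalf_k(\okm;\mcalc)\;\simeq\;\mcalf_k(\om;\mcalc).
\]
In the first step one passes from $\bkmu$ to $\okmu$ by taking the limit only over those $B\in\bkmu$ with $B\ssie U$ (this subcategory is connected, and Lemma~\ref{lemma7_iso} shows the limit is just $G(\widetilde U)$). The second step extends by zero to $\okm$: objects with fewer than $k$ components get $0$, and non-isotopy-equivalence inclusions between $k$-component objects are sent to zero maps. Only then does one right Kan extend along the \emph{full} subposet inclusion $\okm\hra\om$, where the indexing category now has a terminal object whenever $U\in\okm$ and the $T_{k-1}$ check becomes the honest computation $T_{k-1}F^!(U)\cong\lim_{V\in\mathcal{O}_{k-1}(U)}F(V)=0$. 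Your sketch of the very-goodness argument (using admissible families to build the inverse along an isotopy equivalence) is correct and is exactly the content of Lemma~\ref{very_good_lem}; it just needs to be applied to the functor on $\okm$, not on $\bkmu$.
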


We will prove Theorem~\ref{equiv_cat_thm} in Subsection~\ref{equiv_cat_thm_proof_subsection}.  We first need to introduce some terminology. Also we need  to establish a certain amount of intermediate results.

%To prove Theorem~\ref{equiv_cat_thm} we will need seven lemmas, the seventh being a consequence of the others. In order to state them, we need to first introduce some notations.

\subsection{Admissible family of open subsets}  \label{admissible_family_subsection}

We introduce the concept of \textit{admissible family} (see Definition~\ref{admissible_def} below), which is crucial for the paper. We also  derive a couple of results (Propositions~\ref{existence_prop}, \ref{kk_admissible_prop}) that will be used in next subsections.  For this subsection, consider the following data: $W \in \om$, $U, V \in \mcalo_k(W)$, and $L \colon U \times [0, 1] \lra W$ is an isotopy from $U$ to $V$. 
%For subsets $A$ and $B$ of $M$ such that $A \subseteq B$, we let  $AB \colon A \hookrightarrow B$ denote the inclusion map. Let $k \geq 1$.  For this subsection, we consider the following data: 
%\begin{enumerate}
%\item[$\bullet$] $W \in \om$;
%\item[$\bullet$] $U, V \in \mcalo_k(W)$;
%\item[$\bullet$] $L \colon U \times [0, 1] \lra W$ is an isotopy from $U$ to $V$.
%\end{enumerate} 

Recall the notation \lq\lq $\ssie$\rq\rq{} from Definition~\ref{isotopy_defn}-(ii). Also recall the notation \lq\lq AB\rq\rq{} from Section~\ref{notation_section}.

\begin{defn} \label{admissible_def}
Let $K \subseteq U$ be a nonempty compact subset such that $\pi_0(KU)$ is surjective. A family $a =\{a_0, a_1, \cdots, a_m, a_{m+1}\} \subseteq [0, 1]$ such that $a_0 = 0, a_{m+1} =1$, and $a_i \leq a_{i+1}, 0 \leq i \leq m,$ is called \emph{admissible} with respect  to $\{L, K\}$ (or just \emph{admissible}) if  there exists a collection 
$
\mcalu_a = \{U_{01}, U_{12}, \cdots, U_{m(m+1)}\}
$
of objects of $\okm$ such that for all $i$, for all $s \in [a_i, a_{i+1}]$, one has
\begin{eqnarray} \label{admissible_cond}
 L_s(K) \subseteq U_{i(i+1)} \ssie L_s(U),
\end{eqnarray}
and
\begin{eqnarray} \label{admissible_cond2}
\overline{U_{i(i+1)}} \subseteq L_s(U),
\end{eqnarray}
where $\overline{U}_{i(i+1)}$ stands for the closure of $U_{i(i+1)}$.  Such a collection $\mcalu_a$ is said to be \emph{$\{a, K, L\}$-admissible}. 
\end{defn}

\begin{prop} \label{existence_prop}
Let $K$ as in Definition~\ref{admissible_def}. Then there exists an  admissible family   $a = \{a_0, \cdots, a_{m+1}\}$ with respect to $\{K, L\}$. 
\end{prop}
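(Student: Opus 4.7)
The plan is to use a compactness argument on $[0,1]$. For each $t \in [0,1]$ I would first construct a single $U_t \in \okm$ satisfying conditions (\ref{admissible_cond}) and (\ref{admissible_cond2}) at the single time $s = t$, then show that this same $U_t$ continues to satisfy both conditions for all $s$ in some open neighborhood of $t$, and finally extract a finite subcover of $[0,1]$ and use it to produce the partition $\{a_0, \ldots, a_{m+1}\}$.

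For the pointwise construction at a fixed $t$, note that $L_t(U)$ is diffeomorphic to a disjoint union $B_1 \sqcup \cdots \sqcup B_r$ of at most $k$ open balls (with $r$ equal to the number of components of $U$), and surjectivity of $\pi_0(KU)$ forces $L_t(K)$ to meet every $B_i$. Inside each $B_i$ I choose an open ball $V_i$ that contains the compact piece $L_t(K) \cap B_i$ and whose closure is still in $B_i$ (using any diffeomorphism $B_i \cong \rbb^n$). The disjoint union $U_t := V_1 \sqcup \cdots \sqcup V_r$ then lies in $\okm$, contains $L_t(K)$, has $\overline{U_t} \subseteq L_t(U)$, and by Proposition~\ref{isotopy_equiv_prop} the inclusion $U_t \hra L_t(U)$ is an isotopy equivalence.

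The key tool for propagating to nearby $s$ is the track map $\tilde{L} \colon U \times [0,1] \lra W \times [0,1]$, $(y,s) \mapsto (L_s(y), s)$. Since each $L_s$ is a smooth embedding, the differential of $\tilde{L}$ has the block form $\begin{pmatrix} D_y L_s & \partial_s L \\ 0 & 1 \end{pmatrix}$ and is invertible, so $\tilde{L}$ is an embedding whose image is open in $W \times [0,1]$. Labeling the components of $U$ as $W_1, \ldots, W_r$ with $L_t(W_i) = B_i$, the sets $\tilde{L}(U \times [0,1])$ and $\tilde{L}(W_i \times [0,1])$ are open in $W \times [0,1]$ and contain $\overline{U_t} \times \{t\}$ and $\overline{V_i} \times \{t\}$ respectively, while $L^{-1}(U_t)$ is open in $U \times [0,1]$ and contains $K \times \{t\}$. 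The tube lemma applied to these three configurations produces $\delta_t > 0$ such that whenever $s \in [0,1]$ satisfies $|s - t| < \delta_t$ one simultaneously has $L_s(K) \subseteq U_t$, $\overline{U_t} \subseteq L_s(U)$, and $V_i \subseteq L_s(W_i)$ for every $i$. The last of these places the $r$ components of $U_t$ in the $r$ distinct components of $L_s(U)$, and Proposition~\ref{isotopy_equiv_prop} then yields $U_t \ssie L_s(U)$.

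The main obstacle I expect is precisely this simultaneous propagation in $s$ of the ambient inclusions in (\ref{admissible_cond}) and (\ref{admissible_cond2}); the openness of $\tilde{L}$ is the single ingredient that upgrades pointwise containment at $s = t$ into containment on an open interval around $t$. Once the pointwise-with-margin statement is in place, covering $[0,1]$ by the intervals $(t - \delta_t, t + \delta_t)$, extracting a finite subcover centered at $t_0, \ldots, t_m$, choosing $0 = a_0 < \cdots < a_{m+1} = 1$ so that each $[a_i, a_{i+1}]$ lies inside some $(t_i - \delta_{t_i}, t_i + \delta_{t_i})$, and setting $U_{i(i+1)} := U_{t_i}$ yields the desired $\{a, K, L\}$-admissible collection $\mcalu_a$.
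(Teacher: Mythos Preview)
Your argument is correct and follows a genuinely different route from the paper's. One small caveat: your block-Jacobian computation for $\tilde{L}$ tacitly assumes $L$ is jointly smooth, whereas Definition~\ref{isotopy_defn} only demands that $L$ be continuous with each $L_t$ a smooth embedding. The conclusion you need (that $\tilde{L}$ has open image) still holds, but you should appeal to invariance of domain instead: $\tilde{L}$ is a continuous injection between $(n{+}1)$-manifolds (extend $L$ by $L_s = L_0$ for $s<0$ and $L_s = L_1$ for $s>1$ to sidestep the boundary), hence open.

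The paper proceeds componentwise rather than via the track map. It first isolates the single-ball case in Lemma~\ref{existence2_lem}: choosing a diffeomorphism $\theta\colon B_1 \to U$ and a radius $\delta$ with $K \subseteq \theta(B_\delta)$, it applies the point-set Lemma~\ref{existence1_lem} (disjoint compact images remain disjoint for nearby times) to the spheres $\theta(S_\delta)$ and $\theta(S_{(1+\delta)/2})$ to trap $H_s(K)$ inside $V_t := \theta(B_\delta)$ with $\overline{V_t} \subseteq H_s(U)$ on a short interval. It then runs this for each component $U^r$ separately, extracts per-component admissible families $a^r$, and sets $a = \bigcup_r a^r$. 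Your use of the openness of $\tilde{L}$ together with three tube-lemma applications handles all components at once and replaces the concentric-spheres trick by a single structural observation; the paper's argument, in exchange, never needs invariance of domain and stays entirely within elementary point-set topology.
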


To prove Proposition~\ref{existence_prop} we will need two lemmas, the first  being  a matter of point-set topology. 

\begin{lem} \label{existence1_lem}
Let $X$ be a Hausdorff  space, and let $K, K'$ be two compact spaces. Let $f \colon K \lra X$, and $g \colon K' \lra X$ be continuous such that $f(K) \cap g(K) = \emptyset$. Consider a homotopy $H \colon K \times [0, 1] \lra X$ such that $H_t =f$ for some $t \in [0, 1]$. Then there exists $\epsilon > 0$ such that for all $s \in (t-\epsilon, t+\epsilon), \ H_s(K) \cap g(K') = \emptyset$. 
%\[
%\forall s \in (t-\epsilon, t+\epsilon), \ H_s(K) \cap g(K') = \emptyset. 
%\]
\end{lem}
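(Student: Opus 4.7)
The plan is to reduce this to a standard application of the tube lemma after separating the two disjoint compact sets by open neighborhoods. Since the statement is point-set in nature (and the hypothesis clearly intends $f(K) \cap g(K') = \emptyset$), no manifold structure on $X$ is needed.

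First I would use the fact that $X$ is Hausdorff together with the compactness of $K$ and $K'$: the images $f(K)$ and $g(K')$ are disjoint compact subsets of a Hausdorff space, so there exist disjoint open sets $U, V \subseteq X$ with $f(K) \subseteq U$ and $g(K') \subseteq V$. This is the standard fact that compact sets in a Hausdorff space can be separated by disjoint open neighbourhoods, obtained by a finite-cover argument from the pointwise Hausdorff separation.

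Next I would consider the preimage $H^{-1}(U) \subseteq K \times [0,1]$. By continuity of $H$ this is open, and by the assumption $H_t = f$ we have $H(K \times \{t\}) = f(K) \subseteq U$, so the slice $K \times \{t\}$ is contained in $H^{-1}(U)$. The tube lemma, applied to the compact factor $K$, then yields an open interval $(t-\epsilon, t+\epsilon) \cap [0,1]$ such that $K \times \bigl((t-\epsilon, t+\epsilon)\cap [0,1]\bigr) \subseteq H^{-1}(U)$. Equivalently, $H_s(K) \subseteq U$ for every $s$ in that interval.

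Since $g(K') \subseteq V$ and $U \cap V = \emptyset$, this immediately gives $H_s(K) \cap g(K') = \emptyset$ for all $s \in (t-\epsilon, t+\epsilon)$, which is the conclusion. There is no real obstacle here; the only small care needed is to invoke the tube lemma correctly (using compactness of $K$) rather than trying to argue pointwise, and to read the hypothesis $f(K)\cap g(K)=\emptyset$ as the intended $f(K) \cap g(K') = \emptyset$.
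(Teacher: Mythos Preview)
Your argument is correct and is exactly the standard point-set topology proof one would expect: separate the two disjoint compact images by open sets in the Hausdorff space $X$, then apply the tube lemma to the open set $H^{-1}(U)$ containing the slice $K\times\{t\}$. The paper does not actually give a proof of this lemma; it simply remarks beforehand that the first of the two lemmas is ``a matter of point-set topology'' and leaves it at that, so there is nothing further to compare. Your observation about the typo in the hypothesis (it should read $f(K)\cap g(K')=\emptyset$) is also correct.
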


%\begin{proof}
%Let $p \colon K \times [0, 1] \lra [0, 1]$ be the projection on the second component, and let
%\[
%A = p(H^{-1}(g(K'))).
%\]
%Then $A$ is closed by the compactness of $K, K'$, and by the fact that  $p, H,$ and $g$ are continuous. Furthermore, one can see that
%\[
%A = \{s \in [0, 1] | \ H_s(K) \cap g(K') \neq \emptyset\}. 
%\]
%So $[0, 1] \backslash A$ is open, and it contains $t$ since $H_t =f$ and $f(K) \cap g(K) = \emptyset$ by assumption. This implies the existence of $\epsilon > 0$ such that $(t-\epsilon, t+\epsilon) \subseteq [0, 1] \backslash A$. We thus get  the desired result. 
%\end{proof}

\begin{lem} \label{existence2_lem}
Assume that $U$ is diffeomorphic to an open ball, and let $j \colon U \hra W$ be the inclusion map. Let $K \subseteq U$ as before. Consider an isotopy $H \colon U \times [0, 1] \lra W$ such that $H_t = j$ for some $t \in [0, 1]$. Then there exist $\epsilon > 0$ and  $V_t$ diffeomorphic to an open ball such that  for all $s \in (t-\epsilon, t+\epsilon)$, we have
$H_s(K) \subseteq V_t \subseteq H_s(U)$, and $\overline{V_t} \subseteq H_s(U)$.  
\end{lem}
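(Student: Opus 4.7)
The plan is to construct $V_t$ using only the time-$t$ data and then verify the two set containments for $s$ near $t$ by separate arguments: a tube lemma for the first, and a connectedness plus invariance-of-domain argument for the second. Fix a diffeomorphism $\varphi \colon U \srel \mathbb{R}^n$ and choose bounded open balls $B, B' \subseteq \mathbb{R}^n$ with $\varphi(K) \subseteq B$ and $\overline{B} \subseteq B'$. Setting $V_t := \varphi^{-1}(B)$ and $V' := \varphi^{-1}(B')$ yields two open balls in $U$ with $K \subseteq V_t$, $\overline{V_t} \subseteq V'$, and $\overline{V'}$ compact in $U$ (and hence compact in $W$).

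For $H_s(K) \subseteq V_t$: the open set $H^{-1}(V_t) \subseteq U \times [0,1]$ contains the compact slice $K \times \{t\}$ because $H_t(K) = K \subseteq V_t$, so the tube lemma supplies $\epsilon_1 > 0$ with $H_s(K) \subseteq V_t$ for $s \in (t-\epsilon_1, t+\epsilon_1)$.

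For $\overline{V_t} \subseteq H_s(U)$: fix a metric $d$ on $M$. Since $H$ is continuous on the compact set $\overline{V'} \times [0,1]$ it is uniformly continuous there, so for any $\delta > 0$ there exists $\epsilon_2 > 0$ with $d(H_s(x), x) < \delta$ for all $x \in \overline{V'}$ and all $|s-t| < \epsilon_2$ (using $H_t(x) = x$). Pick any $p_0 \in V_t$ and choose $\delta$ smaller than both $d(\overline{V_t}, \partial V')$ and $d(p_0, W \setminus V_t)$, each positive because $\overline{V_t}$ and $\partial V'$ are disjoint compacta and $V_t$ is open. Then $H_s(\partial V') \cap \overline{V_t} = \emptyset$ and $H_s(p_0) \in V_t$. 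Invariance of domain makes $H_s(V')$ an open subset of $W$, and injectivity of $H_s$ identifies its topological boundary in $W$ with $H_s(\partial V')$. Hence $\overline{V_t}$ sits inside the disjoint union of open sets $H_s(V') \sqcup (W \setminus \overline{H_s(V')})$; being connected (as the closure of the connected $V_t$) and meeting $H_s(V')$ at $H_s(p_0)$, it must lie entirely in $H_s(V') \subseteq H_s(U)$. Taking $\epsilon := \min(\epsilon_1, \epsilon_2)$ finishes the proof.

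The main obstacle is this last containment: the isotopy is only jointly continuous, with no $C^1$-control in $s$, so one cannot directly apply an inverse-function-theorem argument to say that $H_s(U)$ still contains a fixed open set. The connectedness of $\overline{V_t}$ combined with invariance of domain circumvents this, requiring only uniform $C^0$-closeness of $H_s$ to the inclusion on $\overline{V'}$, which is exactly what the compactness of $\overline{V'} \times [0,1]$ provides.
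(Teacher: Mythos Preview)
Your proof is correct and follows essentially the same strategy as the paper's: pick nested balls $K \subseteq V_t \subseteq V' \subseteq U$ with compact closures, then use compactness of the time interval to control both containments for $s$ near $t$. The paper packages the compactness step via its Lemma~\ref{existence1_lem} applied to pairs of concentric spheres, whereas you use the tube lemma and uniform continuity directly; and you make explicit the invariance-of-domain plus connectedness argument that the paper leaves implicit when passing from ``$H_s(\theta(S_{(1+\delta)/2})) \cap \theta(S_\delta)=\emptyset$'' to ``$\theta(\overline{B_\delta}) \subseteq H_s(\theta(B_{(1+\delta)/2}))$''.
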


\begin{proof}
Let $n$ be the dimension of $M$. For $r > 0$, we let 
$
B_r = \{x \in \rbb^n| \ \left\|x\right\| < r\} \quad \text{and} \quad S_r = \overline{B}_r \backslash B_r.
$
Consider a diffeomorphism $\theta \colon B_1 \lra U$. Since $K$ is compact, there exists $\delta > 0$ such that $K \subseteq \theta(B_{\delta})$. Also consider the inclusion maps 
$
f \colon \theta\left(S_{\frac{1+\delta}{2}}\right) \hra W  \mbox{ and } g \colon \theta(S_{\delta}) \hra W.
$
Clearly, one has 
$
f\left(\theta\left(S_{\frac{1+\delta}{2}}\right)\right) \cap g\left(\theta(S_{\delta})\right) = \emptyset.
$
So by applying Lemma~\ref{existence1_lem}, there is $\epsilon' > 0$ such that
$
\forall s \in (t-\epsilon', t+\epsilon'), \ H_s\left(\theta\left(S_{\frac{1+\delta}{2}}\right)\right) \cap \theta(S_{\delta}) = \emptyset.  
$
This implies 
\begin{eqnarray} \label{seps1_eqn}
\forall s \in (t-\epsilon', t+\epsilon'), \ \theta (B_{\delta}) \subseteq H_s\left(\theta\left(B_{\frac{1+\delta}{2}}\right)\right) \subseteq H_s(\theta(B_1)) = H_s(U), 
\end{eqnarray}
and 
\begin{eqnarray} \label{seps1p_eqn}
\forall s \in (t-\epsilon', t+\epsilon'), \ \overline{\theta (B_{\delta})} \subseteq H_s\left(\theta\left(B_{\frac{1+\delta}{2}}\right)\right) \subseteq H_s(\theta(B_1)) = H_s(U). 
\end{eqnarray}
Similarly, by Lemma~\ref{existence1_lem}, there exsits $\epsilon'' > 0$ such that $H_s(K) \cap \theta(S_{\delta}) = \emptyset$ for all $s \in (t-\epsilon'', t+\epsilon'')$. This implies 
\begin{eqnarray} \label{seps2_eqn}
\forall s \in (t-\epsilon'', t+\epsilon''), \ H_s(K) \subseteq \theta (B_{\delta}). 
\end{eqnarray}
Letting
$
\epsilon = \text{min}(\epsilon', \epsilon'') \mbox{ and } V_t = \theta(B_{\delta}),
$
the desired result follows from (\ref{seps1_eqn}), (\ref{seps1p_eqn}) and (\ref{seps2_eqn}).
\end{proof}

\begin{proof}[Proof of Proposition~\ref{existence_prop}]
Let $p$ denote the number of connected components of $U$. Since $U \in \okm$, for $1 \leq r \leq p$ each component, $U^r$, of  $U$ is diffeomorphic to an open ball.  Let 
$
K^r = K \cap U^r,
$
and let $t \in [0, 1]$. ($K^r \neq \emptyset$ since by assumption $\pi_0(KU)$ is surjective.) By Lemma~\ref{existence2_lem}, there exist $\epsilon > 0$ and $V^r_t$ diffeomorphic to an open ball such that for all $s \in (t-\epsilon, t+\epsilon)$,
$
L_s(K^r) \subseteq V^r_t \subseteq L_s(U^r), \text{ and } \overline{V^r_t} \subseteq L_s(U^r).
$
Varying $t$, we get an open cover $\{(t-\epsilon, t+\epsilon)\}_t$ of $[0, 1]$. Using now the compactness of $[0, 1]$, there exists a finite family $a^r = \{a_0^r, \cdots, a_{m_r+1}^r \}$ such that  $a_0^r = 0, a^r_{m_r+1} = 1$,  $a_i \leq a^r_{i+1}$ for all $i$, and  each interval $[a^r_i, a^r_{i+1}]$ is contained in one of the open subsets of the cover. By construction, such a family $a^r$ comes indeed together with a collection 
$
\{U^r_{i(i+1)}\}_{i = 0}^{m_r} \subseteq \{V^r_t\}_t
$
that satisfies (\ref{admissible_cond}). So $a^r$ is admissible with respect to $\{K^r, L\}$. Defining
$
a := \cup_{r = 1}^p a^r, 
$
one can see that $a$ is admissible with respect to $\{\cup_{r = 1}^p K^r, L\} = \{K, L\}$. 
\end{proof}

\begin{prop} \label{kk_admissible_prop}
\begin{enumerate}
\item[(i)] Let $T$ is a finite subset of $[0, 1]$.   If $\{a_0, \cdots, a_{m+1}\}$ is admissible with respect to $\{L, K\}$, then so is $\{a_0, \cdots, a_{m+1}\} \cup T$  with respect to the same set. 
\item[(ii)] Let $K$ and $K'$ be two nonempty compact subsets of $U$ such that $K \subseteq K'$. If a family $\{a_0, \cdots, a_{m+1}\}$ is admissible with respect to $\{L,  K'\}$, then it is also admissible with respect to $\{L, K\}$. 
\end{enumerate}
\end{prop}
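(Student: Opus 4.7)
The plan is that both parts of this proposition are direct verifications: in each case the family of points and the accompanying collection $\mcalu_a$ that witnesses admissibility for the original data continue (after harmless relabeling or duplication) to satisfy the defining conditions (\ref{admissible_cond}) and (\ref{admissible_cond2}) for the new data. No fresh geometric construction is needed; the content is entirely combinatorial bookkeeping together with monotonicity of the containment conditions.

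For part (i), I would argue by induction on the cardinality of $T$, so that it suffices to handle the case of adjoining a single point $t \in [0,1]$. If $t$ already lies in $\{a_0,\ldots,a_{m+1}\}$ the family is unchanged and there is nothing to prove. Otherwise, $t$ falls in some open interval $(a_i, a_{i+1})$, and the new family, after reordering, reads
\[
a_0 < a_1 < \cdots < a_i < t < a_{i+1} < \cdots < a_{m+1}.
\]
Define the corresponding collection $\mcalu_{a \cup \{t\}}$ by keeping each $U_{j(j+1)}$ for $j \neq i$ and replacing the single entry $U_{i(i+1)}$ by two copies of itself, one assigned to the subinterval $[a_i, t]$ and one to $[t, a_{i+1}]$. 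Both conditions (\ref{admissible_cond}) and (\ref{admissible_cond2}) for each new subinterval follow immediately from the fact that they were valid for all $s \in [a_i, a_{i+1}]$, hence in particular on the two subintervals. Iterating over the finitely many points of $T$ yields admissibility of $\{a_0,\ldots,a_{m+1}\} \cup T$.

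For part (ii), the plan is to reuse verbatim the $\{a, K', L\}$-admissible collection $\mcalu_a = \{U_{01},\ldots,U_{m(m+1)}\}$ and check that it is also $\{a, K, L\}$-admissible. The containments $U_{i(i+1)} \ssie L_s(U)$ and $\overline{U_{i(i+1)}} \subseteq L_s(U)$ do not involve $K'$ or $K$ at all, so they carry over unchanged. For the remaining containment, since the isotopy $L_s$ is in particular an embedding for each $s$, the inclusion $K \subseteq K'$ gives $L_s(K) \subseteq L_s(K')$, and combining this with $L_s(K') \subseteq U_{i(i+1)}$ yields $L_s(K) \subseteq U_{i(i+1)}$ for every $s \in [a_i, a_{i+1}]$. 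Thus $\mcalu_a$ witnesses admissibility of $\{a_0,\ldots,a_{m+1}\}$ with respect to $\{L,K\}$. I do not expect any serious obstacle; the only subtlety worth flagging is that one should implicitly assume $K$ still satisfies the surjectivity condition on $\pi_0(KU)$ required by Definition~\ref{admissible_def}, since the statement of admissibility depends on it.
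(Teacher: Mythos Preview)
Your proof is correct and follows essentially the same approach as the paper: for (i) the paper also inducts on $|T|$, inserts a single point $t \in [a_j,a_{j+1}]$, and duplicates $U_{j(j+1)}$ to obtain the new admissible collection; for (ii) the paper simply notes that it follows directly from the definition, which is exactly the verification you wrote out.
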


\begin{proof}
(i) By induction on the cardinality of $T$. If $T = \{t\}$ for some $t \in [0, 1]$, then there exists $0 \leq j \leq m$ such that $t \in [a_j, a_{j+1}]$. Let $b = a \cup \{t\}$, and define $\mcalu_b = \{U'_{01}, \cdots, U'_{(m+1)(m+2)}\}$ as 
\begin{eqnarray} \label{family_uii}
U'_{i(i+1)} = \left\{ \begin{array}{ccc}
                     U_{i(i+1)} & \text{if}  & i \leq j-1 \\
										 U_{j(j+1)} & \text{if} & i \in \{j, j+1\} \\
										 U_{(i-1)i} & \text{if}  & i \geq j+2, 
                     \end{array} \right. \quad  0 \leq i \leq m+1.
\end{eqnarray}
Manifestly $\mcalu_b$ satisfies (\ref{admissible_cond}), which proves the base case. The inductive step is handled in the same way  as the base case. 
(ii) This follows directly from the definition. 
%Assume that $a = \{a_0, \cdots, a_{m+1}\}$ is admissible with respect to $\{L, K'\}$. Then there exists a collection $\mcalu_a$, which is $\{a, K', L\}$-admissible. Since  $K \subseteq K'$, it follows that  $L_s(K) \subseteq L_s(K')$ for any $s$. Therefore the same collection $\mcalu_a$ is also $\{a, K, L\}$-admissible. 
\end{proof}

\subsection{The isomorphism $\iso(\mcalu_a, a, K, L)$} \label{isomorphism_isouaakl_subsection}

We continue to use the same data ($W, U, V, L$) as in Subection~\ref{admissible_family_subsection}. The very first goal here is  to define an important isomorphism, $\iso(\mcalu_a, a, K, L) \colon F(U) \lla F(V)$, out of an admissible family and a very good functor $F \colon \okm \lra \mcalc$.  Next we  show that   this isomorphism is independent of the choice of $\mcalu_a, a, $ and $K$ in Lemmas~\ref{uii_upii_lem}, \ref{isoab_isoab}, and \ref{admissible_kkp_lem} respectively. These lemmas are part of ingredients needed for the proof of Theorem~\ref{equiv_cat_thm}.   

%In the next subsection, we will show that  under certain conditions $\iso(\mcalu_a, a, K, L)$ does not depend on the choice of $L$ as well.    

Let $c(U)$ denote the number of components of $U$. Of course $c(U) = c(V)$ since $U$ is isotopic to $V$. Let  $\{U^1, \cdots, U^{c(U)}\}$ be the set of components of $U$. 
%Note that each $U^r$ is diffeomorphic to an open ball by Definition~\ref{polynomial_defn}-(i) of $\okm$.  
For each $1 \leq r \leq c(U)$, we let $K^r \subseteq U^r$ denote a nonempty compact subset, and $K := \cup_{r =1}^{c(U)} K^r$. Also let $a = \{a_0, \cdots, a_{m+1}\}$ be an admissible family with respect to $\{K, L\}$, and   $F \colon \okm \lra \mcalc$ be a  very good  functor. We want to define an isomorphism  $F(U) \stackrel{\cong}{\longleftarrow} F(V)$ out of these data. First of all, for each $0 \leq i \leq m+1$, define $U_i$ as the image of $U$ under $L_{a_i}$. That is,
$
U_i :=  L_{a_i}(U).
$
Clearly, one has $U_0 = U$ and $U_{m+1} = V$. Since $a$ is admissible, there is a collection $\{U_{i(i+1)}\}_{i=0}^m$ which is $\{a, K, L\}$-admissible in the sense of Definition~\ref{admissible_def}. 
%\begin{defn} \label{aluvwk_family_defn}
%For each $1 \leq r \leq c(U)$, for every $0 \leq i \leq m$, choose $U_{i(i+1)}^r \subseteq U_i^r \cap U_{i+1}^r$ such that 
%\begin{enumerate}
%\item[(a)] $U^r_{i(i+1)}$ is diffeomorphic to an open ball, and 
%\item[(b)] $U^r_{i(i+1)}$ lies in the same path-component as $L_{a_i}(K^r)$, which is viewed as a subspace of the intersection $U_i^r \cap U_{i+1}^r$ \footnote{Notice that $U_i^r \cap U_{i+1}^r$ is not necessarily path-connected}. 
%\end{enumerate}
%Set $U_{i(i+1)} = \cup_{r =1}^p U^r_{i(i+1)}$. Such a collection $\{U_{i(i+1)}\}_{i=0}^m$ is called a \emph{$\{a, L, U, V, W, K\}-$family}.
%\end{defn}
Certainly, for every $i$, the inclusions $U_{i(i+1)}U_i \colon U_{i(i+1)} \hra U_i$  and $U_{i(i+1)}U_{i+1} \colon U_{i(i+1)} \hra U_{i+1}$ are isotopy equivalences by (\ref{admissible_cond}). So  $F(U_{i(i+1)}U_i)$ and $F(U_{i(i+1)}U_{i+1})$ are both isomorphisms for all $i$ since $F$ is very good.  Applying now $F$ to the zigzag 
\[
\xymatrix{U_0  & U_{01} \ar[l] \ar[r]  & U_1 & \cdots \ar[l] \ar[r]  & U_m & U_{m(m+1)} \ar[l] \ar[r]  & U_{m+1},}
\]
we get the following  diagram of isomorphisms.
\[
\xymatrix{F(U_0) \ar[r]  & F(U_{01})   & F(U_1) \ar[l] \ar[r] &  \cdots & F(U_m) \ar[l] \ar[r] & F(U_{m(m+1)})  & F(U_{m+1}) \ar[l]}
\]
%in which every morphism is of course a linear isomorphism. 

\begin{defn} \label{iso_luvwk_defn}
Define 
$
\iso(\mcalu_a, a, K, L) \colon F(U) \lla F(V)
$
as the composition
\begin{align*}
\iso(\mcalu_a, a, K, L) & = & \\
(F(U_{01}U_0))^{-1} \cdots (F(U_{i(i+1)}U_i))^{-1} F(U_{i(i+1)}U_{i+1}) \cdots F(U_{m(m+1)}U_{m+1}). &  &
\end{align*}
\end{defn}

\sloppy

\begin{lem} \label{uii_upii_lem}
Let $\mcalu'_a = \{U'_{i(i+1)}\}_{i=0}^m$ be another collection which is $\{a, K, L\}$-admissible. Then  
$
\iso(\mcalu_a, a, K, L) = \iso(\mcalu'_a, a, K, L)
$
\end{lem}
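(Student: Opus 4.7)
The plan is to reduce the equality of $\iso(\mcalu_a, a, K, L)$ and $\iso(\mcalu'_a, a, K, L)$ to a purely local comparison at each index. By inspecting the defining formula, it suffices to prove that for every $i \in \{0, \ldots, m\}$,
\[
(F(U_{i(i+1)}U_i))^{-1} F(U_{i(i+1)}U_{i+1}) = (F(U'_{i(i+1)}U_i))^{-1} F(U'_{i(i+1)}U_{i+1}),
\]
as morphisms $F(U_{i+1}) \to F(U_i)$, since then the $m+1$ factors in the two compositions agree one by one.

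To establish this local equality, I would produce a \emph{common refinement}: an open subset $W_i \subseteq U_{i(i+1)} \cap U'_{i(i+1)}$ that lies in $\okm$ and is such that both inclusions $W_i \hra U_{i(i+1)}$ and $W_i \hra U'_{i(i+1)}$ are isotopy equivalences. Fixing the parameter $s_0 = a_i$, admissibility gives that $U_{i(i+1)}$ and $U'_{i(i+1)}$ are both objects of $\okm$ sitting between $L_{s_0}(K)$ and $L_{s_0}(U)$, and both isotopy equivalent to $L_{s_0}(U)$. Applying the construction of Lemma~\ref{existence2_lem} component-by-component around $L_{s_0}(K)$, I can produce a disjoint union $W_i$ of small open balls, one in each component of $U_{i(i+1)}\cap U'_{i(i+1)}$ that meets $L_{s_0}(K)$, lying inside both $U_{i(i+1)}$ and $U'_{i(i+1)}$. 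Because $\pi_0(KU)$ is surjective, $L_{s_0}(K)$ hits every component of $L_{s_0}(U)$, and so by Proposition~\ref{isotopy_equiv_prop}(i) it hits every component of each of $U_{i(i+1)}$ and $U'_{i(i+1)}$; arranging $W_i$ to have the same number of components as those two sets, Proposition~\ref{isotopy_equiv_prop}(ii) yields the desired isotopy equivalences.

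Once $W_i$ is in hand, the zigzag $U_j \hookleftarrow U_{i(i+1)} \hookleftarrow W_i \hookrightarrow U'_{i(i+1)} \hookrightarrow U_j$ (for $j=i,i+1$) is a diagram of set-theoretic inclusions, all isotopy equivalences, so $F$ sends each arrow to an isomorphism and the induced triangles commute by functoriality. This gives $F(U_{i(i+1)}U_j) = F(W_i U_{i(i+1)})^{-1} F(W_i U_j)$, and similarly for the primed version, whence
\[
(F(U_{i(i+1)}U_i))^{-1} F(U_{i(i+1)}U_{i+1}) = F(W_i U_i)^{-1} F(W_i U_{i+1}) = (F(U'_{i(i+1)}U_i))^{-1} F(U'_{i(i+1)}U_{i+1}),
\]
as required.

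The main obstacle is the existence of $W_i$: one must produce an open set in $\okm$ inside $U_{i(i+1)}\cap U'_{i(i+1)}$ whose inclusions into both sets are $\pi_0$-isomorphisms. This is a point-set topology construction, but it requires the careful component-by-component choice described above, together with the surjectivity of $\pi_0(KU)$ built into the hypothesis on $K$; the remainder of the argument is then a straightforward cancellation in $\mcalc$.
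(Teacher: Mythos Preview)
Your argument is correct and follows essentially the same route as the paper's proof: both construct, for each $i$, a common refinement $W_i$ (the paper calls it $D_{i(i+1)}$) inside $U_{i(i+1)}\cap U'_{i(i+1)}$ that is isotopy equivalent to each, and then use functoriality of $F$ on the resulting commutative ladder to identify the two zigzag compositions. The only cosmetic difference is that the paper packages the comparison into a single three-row commutative diagram rather than writing out the local cancellation, and it obtains $D_{i(i+1)}$ directly from the fact that $L_{a_i}(K)$ lies in the intersection (your appeal to Lemma~\ref{existence2_lem} is unnecessary here, since you only need a small ball around each point of $L_{a_i}(K)$ at a fixed time).
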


\begin{proof}
Since 
$
L_{a_i}(K) \subseteq U_{i(i+1)} \text{ and } L_{a_i}(K) \subseteq U'_{i(i+1)}, 0 \leq i \leq m,
$
by (\ref{admissible_cond}) it follows that $L_{a_i}(K) \subseteq U_{i(i+1)} \cap U'_{i(i+1)}$. Since $K$ is nonempty, there exists $D_{i(i+1)} \in \okm$ contained in $L_{a_i}(K)$ such that 
$
D_{i(i+1)} \ssie U_{i(i+1)} \quad \text{and} \quad D_{i(i+1)} \ssie U'_{i(i+1)}.
$
Consider now the following commutative diagram of isotopy equivalences. 
\[
\xymatrix{U_0 \ar@{=}[d] & U_{01} \ar[l] \ar[r]  & U_1 \ar@{=}[d] & \cdots \ar[l] & U_{m(m+1)} \ar[l] \ar[r] & U_{m+1} \ar@{=}[d] \\
U_0 \ar@{=}[d] & D_{01} \ar[l] \ar[r] \ar[u] \ar[d] & U_1 \ar@{=}[d] & \cdots \ar[l] & D_{m(m+1)} \ar[l] \ar[r] \ar[u] \ar[d] & U_{m+1} \ar@{=}[d]\\
U_0  & U'_{01} \ar[l] \ar[r] &  U_1 & \cdots \ar[l]  & U'_{m(m+1)} \ar[l] \ar[r]  & U_{m+1}.}  
\]
By applying $F$ to it, and by using the fact that each square of the resulting diagram commutes, and the fact that every vertical map $F(U_i) \lra F(U_i)$ is the identity, we get 
$
\iso(\mcalu_a, a, K, L) = \iso(\mcalu'_a, a, K, L),
$
which is the required result.  
\end{proof}

\begin{lem} \label{isoa_isoba}
Let $T \subseteq [0, 1]$ be a finite set, and let $a = \{a_0, \cdots, a_{m+1}\}$ be admissible with respect to $\{K, L\}$. Consider another family $b = a \cup T$, which is indeed admissible with respect to $\{K, L\}$ by Proposition~\ref{kk_admissible_prop}-(i). Then for any collection $\mcalu_b$ which is $\{b, K, L\}$-admissible, one has
$
\iso(\mcalu_a, a, K, L) =  \iso(\mcalu_b, b, K, L). 
$
\end{lem}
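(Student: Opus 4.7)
The plan is to proceed by induction on the cardinality of $T$. The base case $|T|=0$ gives $b=a$ and there is nothing to prove. For the inductive step it suffices to establish the case $|T|=1$, since if $T = T' \cup \{t\}$ one can apply the single-point case first to pass from $a$ to $a \cup T'$, and then again to pass from $a \cup T'$ to $b = a \cup T$ (noting that $a \cup T'$ is admissible with respect to $\{K,L\}$ by Proposition~\ref{kk_admissible_prop}-(i), so the intermediate isomorphism is defined).

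For the single-point case, write $T = \{t\}$ and pick $0 \leq j \leq m$ with $t \in [a_j, a_{j+1}]$. By Lemma~\ref{uii_upii_lem}, the isomorphism $\iso(\mcalu_b, b, K, L)$ does not depend on the choice of $\{b, K, L\}$-admissible collection, so I would choose the particular collection $\mcalu'_a = \{U'_{01}, \ldots, U'_{(m+1)(m+2)}\}$ constructed in formula~(\ref{family_uii}) in the proof of Proposition~\ref{kk_admissible_prop}-(i). Under this choice one has $U'_{j(j+1)} = U'_{(j+1)(j+2)} = U_{j(j+1)}$, and the remaining $U'_{i(i+1)}$'s agree, after a shift of index, with the $U_{i(i+1)}$'s.

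Writing $V_i := L_{b_i}(U)$ for the images under the reindexed family $b_0 < \cdots < b_{m+2}$ (so $V_i = U_i$ for $i \leq j$, $V_{j+1} = L_t(U)$, and $V_i = U_{i-1}$ for $i \geq j+2$), the composition defining $\iso(\mcalu'_a, b, K, L)$ differs from the composition defining $\iso(\mcalu_a, a, K, L)$ only in the segment indexed by positions $j, j+1$, where the former contains the block
\[
(F(U_{j(j+1)}V_j))^{-1}\, F(U_{j(j+1)}V_{j+1})\, (F(U_{j(j+1)}V_{j+1}))^{-1}\, F(U_{j(j+1)}V_{j+2})
\]
while the latter contains the block $(F(U_{j(j+1)}U_j))^{-1}\, F(U_{j(j+1)}U_{j+1})$. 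The two middle factors in the first block cancel because $F(U_{j(j+1)}V_{j+1})$ is an isomorphism (since $U_{j(j+1)} \ssie V_{j+1} = L_t(U)$ by admissibility of $a$ on $[a_j, a_{j+1}]$), and after this cancellation the block reduces to $(F(U_{j(j+1)}U_j))^{-1}\,F(U_{j(j+1)}U_{j+1})$, matching $\iso(\mcalu_a, a, K, L)$.

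The only subtle point, and the step I would write most carefully, is checking that the collection $\mcalu'_a$ from~(\ref{family_uii}) really is $\{b, K, L\}$-admissible; this amounts to verifying conditions (\ref{admissible_cond}) and (\ref{admissible_cond2}) on each of the subintervals $[a_j, t]$ and $[t, a_{j+1}]$, both of which are contained in $[a_j, a_{j+1}]$, so that the admissibility of $\mcalu_a$ over $[a_j, a_{j+1}]$ restricts directly. Once this is in hand, the invocation of Lemma~\ref{uii_upii_lem} lets us transfer the equality back to the originally given $\mcalu_b$, completing the induction.
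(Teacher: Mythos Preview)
Your proof is correct and follows essentially the same approach as the paper: induction on $|T|$, reduction to the single-point case, invocation of Lemma~\ref{uii_upii_lem} to replace $\mcalu_b$ by the specific collection~(\ref{family_uii}), and cancellation of the two middle factors $F(U_{j(j+1)}L_t(U))$ and its inverse. Your additional remark that admissibility of~(\ref{family_uii}) on the subintervals $[a_j,t]$ and $[t,a_{j+1}]$ follows by restriction is already handled in the paper by Proposition~\ref{kk_admissible_prop}-(i), so you could simply cite that instead.
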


\begin{proof}
By induction on the cardinality of $T$.  Assume that $T = \{t\}$ for some $t \in [0, 1]$. Then there exists $j \in \{0, \cdots, m\}$ such that $a_j \leq t \leq a_{j+1}$. Since $\iso(\mcalu_b, b, K, L)$ does not depend on the choice of $\mcalu_b$ by Lemma~\ref{uii_upii_lem}, to make things easier, we take $\mcalu_b = \{U'_{i(i+1)}\}_{i=0}^{m+1}$ as defined in (\ref{family_uii}). Note that the inclusions $U'_{j(j+1)} \hra L_t(U)$ and $U'_{(j+1)(j+2)} \hra L_t(U)$ coincide since 
$U'_{j(j+1)} = U'_{(j+1)(j+2)} = U_{j(j+1)}$
 by (\ref{family_uii}). This implies that $F(U'_{j(j+1)}L_t(U)) \circ \left(F(U'_{(j+1)(j+2)}L_t(U))\right)^{-1} = id.$
Using Definition~\ref{iso_luvwk_defn} and this latter equation, one can easily see that $\iso(\mcalu_b, b, K, L) = \iso(\mcalu_a, a, K, L)$, which proves the base case. 
%Recalling now Definition~\ref{iso_luvwk_defn}, one has 
%\begin{align*}
%\iso(\mcalu_b, b, K, L)  & = &  \\
%\cdots \left(F(U'_{j(j+1)}U_j)\right)^{-1} \underbrace{F(U'_{j(j+1)}L_t(U)) \left(F(U'_{(j+1)(j+2)}L_t(U))\right)^{-1}}_{id} F(U'_{(j+1)(j+2)}U_{j+1}) \cdots  & = & \\
%\cdots \left(F(U_{j(j+1)}U_j)\right)^{-1} F(U_{j(j+1)}U_{j+1}) \cdots & = & \\
%\iso(\mcalu_a, a, K, L). 
%\end{align*}
%This proves the base case. 
The inductive step works in the same way as the base case. 
\end{proof}

The following lemma follows directly from Lemma~\ref{isoa_isoba}.  %says that $\iso(\mcalu_a, a, K, L)$ does not depend on the choice of an admissible family with respect to $\{K, L\}$.  

\begin{lem}  \label{isoab_isoab}
Let $a = \{a_0, \cdots, a_{m+1}\}$ and $b = \{b_0, \cdots, b_{n+1}\}$ be admissible with respect to the same set $\{K, L\}$. Then 
$
\iso(\mcalu_a, a, K, L) = \iso(\mcalu_b, b, K, L). 
$
\end{lem}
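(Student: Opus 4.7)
The plan is to compare both isomorphisms to a common refinement of the two admissible families. Specifically, I would introduce the finite set $c := a \cup b \subseteq [0,1]$, which contains $a_0 = b_0 = 0$ and $a_{m+1} = b_{n+1} = 1$ and can be written in the form $c = \{c_0, \ldots, c_{p+1}\}$ with $c_i \leq c_{i+1}$. Since $c = a \cup (b \setminus a)$ with $b \setminus a$ a finite subset of $[0,1]$, Proposition~\ref{kk_admissible_prop}-(i) ensures that $c$ is admissible with respect to $\{K, L\}$, so I may pick a collection $\mcalu_c$ that is $\{c, K, L\}$-admissible.

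Next I would invoke Lemma~\ref{isoa_isoba} twice. Viewing $c$ as $a \cup (b \setminus a)$, the lemma gives
\[
\iso(\mcalu_a, a, K, L) = \iso(\mcalu_c, c, K, L).
\]
Viewing the same $c$ as $b \cup (a \setminus b)$, the lemma also gives
\[
\iso(\mcalu_b, b, K, L) = \iso(\mcalu_c, c, K, L).
\]
Combining these two equalities immediately yields $\iso(\mcalu_a, a, K, L) = \iso(\mcalu_b, b, K, L)$, as required.

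There is essentially no obstacle here, since the heavy lifting has already been done: Lemma~\ref{uii_upii_lem} guarantees that the isomorphism does not depend on the choice of the $\{c, K, L\}$-admissible collection, so it is legitimate to use the \emph{same} $\mcalu_c$ in both applications of Lemma~\ref{isoa_isoba}. The only mild subtlety is bookkeeping: one must check that $b \setminus a$ and $a \setminus b$ are finite (immediate, since $a$ and $b$ are finite) and that ordering the union $c$ increasingly still yields $0$ and $1$ at the endpoints, which is clear. This completes the plan.
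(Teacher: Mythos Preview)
Your proposal is correct and follows exactly the approach the paper has in mind: the paper simply states that the lemma follows directly from Lemma~\ref{isoa_isoba}, and your common-refinement argument via $c = a \cup b$ is the natural way to spell that out. Nothing further is needed.
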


%\begin{proof}  
%It immediately follows from Lemma~\ref{isoa_isoba}. 
%\end{proof}

%We end this subsection with the following lemma that states that $\iso(\mcalu_a, a, K, L)$ is also independent of the choice of $K$. 

\begin{lem} \label{admissible_kkp_lem}
Let $K$ and $K'$ be nonempty compact subsets of $U$ such that $\pi_0(KU)$ and $\pi_0(K'U)$ are both surjective. Let $a$ (respectively $b$) be admissible with respect to $\{K, L\}$ (respectively $\{K', L\}$). Then one has
$
\iso(\mcalu_a, a, K, L) = \iso(\mcalu_{a'}, a', K', L). 
$
\end{lem}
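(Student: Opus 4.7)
The plan is to reduce everything to a single compact subset that works for both $K$ and $K'$ simultaneously, and then invoke the previous lemmas.

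First I would set $K'' := K \cup K'$. Since $K$ and $K'$ are nonempty compact subsets of $U$ and $\pi_0(KU)$, $\pi_0(K'U)$ are both surjective, the union $K''$ is again nonempty and compact, and $\pi_0(K''U)$ is clearly surjective as well (its image contains both images). By Proposition~\ref{existence_prop} applied to $K''$, there exists a family $c = \{c_0, \dots, c_{\ell+1}\}$ admissible with respect to $\{K'', L\}$, together with a collection $\mcalu_c$ that is $\{c, K'', L\}$-admissible.

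Next I would use the monotonicity statement Proposition~\ref{kk_admissible_prop}-(ii): since $K \subseteq K''$, the same family $c$ (with the same witnessing collection $\mcalu_c$) is also admissible with respect to $\{K, L\}$; and symmetrically, since $K' \subseteq K''$, it is admissible with respect to $\{K', L\}$. So we may legitimately form both $\iso(\mcalu_c, c, K, L)$ and $\iso(\mcalu_c, c, K', L)$.

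Now I would apply Lemma~\ref{isoab_isoab} twice, to the pairs of families $(a, c)$ (both admissible w.r.t.\ $\{K, L\}$) and $(b, c)$ (both admissible w.r.t.\ $\{K', L\}$), obtaining
\[
\iso(\mcalu_a, a, K, L) = \iso(\mcalu_c, c, K, L) \quad \text{and} \quad \iso(\mcalu_b, b, K', L) = \iso(\mcalu_c, c, K', L).
\]
The final observation, which is really where the statement gets closed, is that by Definition~\ref{iso_luvwk_defn} the morphism $\iso(\mcalu_c, c, K, L)$ is written as a composite of $F$ applied to the inclusions $U_{i(i+1)}U_i$ and $U_{i(i+1)}U_{i+1}$ of the pieces of $\mcalu_c$; the compact set $K$ does not appear in this formula — it is used only to certify admissibility, which gives the inverses in Definition~\ref{iso_luvwk_defn} meaning. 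Hence $\iso(\mcalu_c, c, K, L) = \iso(\mcalu_c, c, K', L)$ tautologically, and chaining the three equalities yields the claim. I do not expect any serious obstacle: the whole point is that the admissibility notion is monotone in $K$, so a common refinement $K'' = K \cup K'$ bridges $K$ and $K'$ via Lemma~\ref{isoab_isoab}.
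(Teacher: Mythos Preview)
Your proof is correct and follows essentially the same route as the paper: pass to $K\cup K'$, use monotonicity of admissibility (Proposition~\ref{kk_admissible_prop}), and observe that the formula in Definition~\ref{iso_luvwk_defn} does not actually mention $K$. The only cosmetic difference is that the paper further refines its common family by taking the union with $a$ and $a'$ and then cites Lemma~\ref{isoa_isoba}, whereas you invoke the slightly more packaged Lemma~\ref{isoab_isoab} directly.
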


\begin{proof}
By Proposition~\ref{existence_prop}, there exists an admissible family $b$ with respect to $\{K \cup K', L\}$. Let 
$
c := b \cup a \cup a'.
$
Certainly $c$ is admissible with respect to $\{K \cup K', L\}$ by Proposition~\ref{kk_admissible_prop}~-(i). Again by the same proposition (but part (ii)), $c$ is admissible with respect to both $\{K, L\}$ and $\{K', L\}$. So, by Definition~\ref{iso_luvwk_defn}, one has
$
\iso(\mcalu_c, c, K \cup K', L) = \iso(\mcalu_c, c, K, L)$ and  $\iso(\mcalu_c, c, K \cup K', L) = \iso(\mcalu_c, c, K', L).$
Moreover, by Lemma~\ref{isoa_isoba}, one has 
$
\iso(\mcalu_c, c, K, L) = \iso(\mcalu_a, a, K, L)$ and  $\iso(\mcalu_c, c, K', L) = \iso(\mcalu_{a'}, a', K', L).$
Combining these equations, we get the desired result. 
\end{proof}

\subsection{Dependence of $\iso(\mcalu_a, a, K, L)$ on the choice of the isotopy $L$}  \label{dependence_subsection}

In Subsection~\ref{isomorphism_isouaakl_subsection}, we showed that the isomorphism $\iso(\mcalu_a, a, K, L)$ introduced in Definition~\ref{iso_luvwk_defn} does not depend on $\mcalu_a, a,$ and $K$. Here the goal is to prove Proposition~\ref{isotopy_invariance_prop}, which says that under certain conditions $\iso(\mcalu_a, a, K, L)$ is independent of the choice of $L$ as well. This result is one of the key ingredients in proving Theorem~\ref{equiv_cat_thm}.

Let $\mcalb(M)$ as in Subsection~\ref{definition_subsection}. Recall the categories $\okmu$ and $\bku(M)$ from Definition~\ref{bku_oku_defn}. We continue to use the same data as in Subsection~\ref{admissible_family_subsection} except that here we make the following restrictions:  $W \in \okmu$,  $U, V \in \bku(M)$ are such that the inclusions $U \hra W$ and $V \hra W$ are both isotopy equivalences,  $L \colon U \times [0, 1] \lra W$ is an isotopy from $U$ to $V$ such that for all $t$, $L_t(U) \in \mcalb(M)$, and  $F \colon \bku(M) \lra \mcalc$ is a very good  functor.

%As in the previous subsection,  $W = \cup_{r=1}^k W^r$ where $W^r$ is a component of $W$ (which is indeed diffeomorphic to an open ball). Similarly we have $U = \cup_{i=1}^k U^r$ and $V = \cup_{r=1}^k V^r$ where  $U^r, V^r \in \mcalb(M)$. Since $U \hra W$ and $V \hra W$ are isotopy equivalences, it follows (by Proposition~\ref{isotopy_equiv_prop})  that $U^r \subseteq W^r$ and $V^r \subseteq W^r$ for every $r$. 

%Given a compact subset $K = \cup_{r=1}^k K^r$ of $U$ such that $K$ is path-connected and $K^r \subseteq U^r, 1 \leq r \leq k$, we showed in Lemma~\ref{isoab_isoab} that the isomorphism 
%\[
%\iso[a, L, U, V, W, K] \colon F(U) \lla F(V)
%\]
%does not depend on the choice of an admissible family $a = \{a_0, \cdots, a_{m+1}\}$. Because of this, we will sometimes drop the letter \lq\lq $a$ \rq\rq{} in the notation $\iso[a, L, U, V, W, K]$. That is,
%\[
%\iso[L, U, V, W, K] := \iso[a, L, U, V, W, K].
%\]
%In this subsection, the goal is to prove that the same isomorphism is also independent of the choice of the isotopy $L$ from $U$ to $V$ in $W$ (see Lemma~\ref{isotopy_invariance_lem} below). 

To state  Proposition~\ref{isotopy_invariance_prop}, we need to make a definition.  As in the preceding subsection, let $\{U^r\}_{r = 1}^k$, $\{V^r\}_{r =1}^k$, and   $\{W^r\}_{r = 1}^k$ denote the set of components of $U, V,$ and $W$ respectively. For each $1 \leq r \leq k$, let $x_r \in U^r$ be a point, and consider the compact set $K = x := \{x_1, \cdots, x_k\}$.  Let  $\lambda^r_{Lx} \colon [0, 1] \lra W^r$ be the path defined as  $\lambda^r_{Lx}(t) := L(x_r, t)$,
and let $\lambda_{Lx} := \{\lambda^r_{Lx}\}_r$. Consider an admissible family $a =\{a_0, \cdots, a_{m+1}\}$  with respect to $\{x, L\}$. By (\ref{admissible_cond}), there exists a collection $\mcalu_{a} = \{U_{i(i+1)}\}$ such that 
\begin{eqnarray} \label{lambda_lxai}
\lambda^r_{Lx} (a_i) = L_{a_i}(x_r)   \in U_{i(i+1)}, 1 \leq i \leq m
\end{eqnarray}

\begin{rmk} \label{uii_rmk}
Looking closer at the proof of Lemma~\ref{existence2_lem}, and using the fact that $\mcalb(M)$ is a basis for the  topology of $M$, one can always assume that each $U^r_{i(i+1)}$ is an object of $\mcalb(M)$ since each component, $K^r$, of $K$ is a single point here. 
\end{rmk} 

By Lemmas~\ref{uii_upii_lem}, \ref{isoab_isoab}, \ref{admissible_kkp_lem}, the isomorphism $\iso(\mcalu_{a}, a, x, L)$ is independent of the choice of $\mcalu_{a}, a,$  and $x$ so that one can rewrite it just in term of $L$.   

\begin{defn} \label{iso_lambda_defn}
Define $\iso(\lambda_{Lx}) = \iso(\lambda_L) \colon F(U) \lla F(V)$ as 
$
 \iso(\lambda_L) := \iso(\mcalu_{a}, a, x, L). 
$
\end{defn}

%The following is the main result of this subsection, and it says that $\iso(\lambda_L)$ does not depend on $L$. 

\begin{prop} \label{isotopy_invariance_prop}
Let $L' \colon U \times [0, 1] \lra W$ be another isotopy from $U$ to $V$ such that $L'_t(U) \in \mcalb(M)$ for all $t$. Then 
$
\iso(\lambda_L) = \iso(\lambda_{L'}).
$
\end{prop}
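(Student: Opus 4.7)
My plan is to reduce the equality $\iso(\lambda_L) = \iso(\lambda_{L'})$ to the commutativity of a single three-tiered diagram of inclusions in $\om$, whose top and bottom rows realize the two isomorphisms and whose middle row is a common ``bridge'' zigzag. The first step is to invoke Proposition~\ref{kk_admissible_prop}-(i) on the union $a = a^L \cup a^{L'}$ of admissible families for $\{x, L\}$ and $\{x, L'\}$ respectively. This yields a single partition $a = \{a_0, \ldots, a_{m+1}\}$ admissible with respect to both, equipped with associated admissible collections $\mcalu^L_a = \{U^L_{i(i+1)}\}$ and $\mcalu^{L'}_a = \{U^{L'}_{i(i+1)}\}$. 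Any further refinement of $a$ remains admissible for both by the same proposition, and Lemma~\ref{uii_upii_lem} gives the freedom to replace each collection with any other admissible choice.

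Next I would refine $a$ finely enough to manufacture auxiliary basis balls $\{B_i\}_{0 \leq i \leq m+1}$ and $\{B_{i(i+1)}\}_{0 \leq i \leq m}$ in $\mcalb^{(k)}(M)$ sitting in the aforementioned three-tiered commutative diagram in $\om$: the top row is the $L$-zigzag $L_{a_0}(U) \hookleftarrow U^L_{01} \hookrightarrow L_{a_1}(U) \hookleftarrow \cdots \hookrightarrow L_{a_{m+1}}(U)$; the bottom row is the analogous $L'$-zigzag; the middle row is $B_0 \hookleftarrow B_{01} \hookrightarrow B_1 \hookleftarrow \cdots \hookrightarrow B_{m+1}$; and every vertical inclusion $B_i \hookrightarrow L_{a_i}(U)$, $B_i \hookrightarrow L'_{a_i}(U)$, $B_{i(i+1)} \hookrightarrow U^L_{i(i+1)}$, $B_{i(i+1)} \hookrightarrow U^{L'}_{i(i+1)}$ is an isotopy equivalence. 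The endpoint bridges are forced: $B_0 = U$ and $B_{m+1} = V$, because $L$ and $L'$ coincide at $t=0$ and $t=1$. Once this diagram is in place, applying the contravariant functor $F$ turns each arrow into an isomorphism of $\mcalc$, and commutativity of each square forces the top-row composition (which is $\iso(\lambda_L)$ by Definition~\ref{iso_luvwk_defn}) and the bottom-row composition (which is $\iso(\lambda_{L'})$) to both equal the middle-row composition.

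The main obstacle is the construction of the interior bridge balls $B_i$ for $1 \leq i \leq m$, since $L_{a_i}(U)$ and $L'_{a_i}(U)$ are a priori distinct elements of $\mcalb^{(k)}(M)$ whose intersection may be empty or too small to contain a suitable basis ball in each component of $W$. My strategy is to refine $a$ repeatedly, applying a joint variant of Lemma~\ref{existence2_lem} to $L$ and $L'$ simultaneously on each short subinterval, so as to shrink $U^L_{i(i+1)}$ and $U^{L'}_{i(i+1)}$ (using Lemma~\ref{uii_upii_lem}) until they share a common basis ball meeting each component of $W$ exactly once; Proposition~\ref{isotopy_equiv_prop}-(ii) will then guarantee that the inclusion of this bridge into either $U^L_{i(i+1)}$ or $U^{L'}_{i(i+1)}$ is an isotopy equivalence, because the induced map on $\pi_0$ is bijective. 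The vertex bridges $B_i$ are then built as common subballs of $B_{(i-1)i}$ and $B_{i(i+1)}$. Carrying out this refinement cleanly---using essentially that $L_t(U), L'_t(U)$ lie in $\mcalb^{(k)}(M)$ for every $t$ and that $\mcalb(M)$ is a basis for the topology of $M$---is the technical crux of the argument.
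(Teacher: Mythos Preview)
Your three-tiered bridge diagram cannot be built in general, and refining the partition $a$ does not help. The difficulty you flag---that $L_{a_i}(U)$ and $L'_{a_i}(U)$ may fail to intersect---is not a technical nuisance but a genuine obstruction: the two isotopies can take entirely disjoint routes through $W$ for all intermediate times (e.g.\ a small ball going ``clockwise'' versus ``counterclockwise'' inside a large ball). Refining $a$ only subdivides $[0,1]$; it does nothing to bring $L_t(U)$ and $L'_t(U)$ spatially closer at any fixed $t$. Likewise, shrinking $U^L_{i(i+1)}$ and $U^{L'}_{i(i+1)}$ makes them \emph{less} likely to overlap, since each must contain the track $L_s(x)$ (resp.\ $L'_s(x)$) over $[a_i,a_{i+1}]$, and these tracks may be far apart. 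So there is no common $B_{i(i+1)}$ to serve as a bridge, and the middle row collapses.

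The missing idea is that you must use the contractibility of each component $W^r$ (this is where the hypothesis $W\in\mcalo^{(k)}(M)$ enters). The paper's proof does not compare $L$ and $L'$ time-slice by time-slice; instead it passes to the \emph{paths} $\lambda_{Lx}$ and $\lambda_{L'x}$ in $W$, chooses a homotopy $H\colon[0,1]^2\to W$ between them (available precisely because $W^r$ is a ball), and builds a two-dimensional grid of basis balls covering the image of $H$. To make this work the paper first extends the isomorphism $\iso(\lambda_L)$ to arbitrary paths $\gamma$ via $\iso(\gamma,c,A_i,A_{i(i+1)})$ (Definition~\ref{gammac_adm_defn} and Lemma~\ref{isoaii_lem}), so that each horizontal slice $H_{d_j}$ of the homotopy carries its own isomorphism. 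Lemma~\ref{abcd_lem} then shows adjacent slices give the same isomorphism, and one walks from $j=0$ to $j=n+1$. Your one-step bridge is trying to jump directly from $\lambda_L$ to $\lambda_{L'}$; the correct argument interpolates through the homotopy.
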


To  prove this result, we need two  lemmas. 
%\begin{defn} \label{gamma_family_defn}
%A \emph{$\gamma$-family} is a collection  
%\[
%\{A_i, A_{j(j+1)}, \ 0 \leq i \leq m+1, \ 0 \leq j \leq m\}
%\] 
%of objects of $\bku(M)$ that satisfy the following four conditions:
%\begin{enumerate}
%\item[(a)] For any $r, i, j$,  $A^r_i \subseteq W^r$ and $A^r_{j(j+1)} \subseteq W^r$; 
%\item[(b)] $A_0 = U$ and $A_{m+1} = V$;
%\item[(c)] $A^r_{i(i+1)} \subseteq A^r_i \cap A^r_{i+1}, 1 \leq r \leq k, 0 \leq i \leq m$;
%\item[(d)] For each $r$, the collection $\{A_i^r, A_{j(j+1)}^r\}_{i, j}$ forms a cover of $\gamma^r$, and $A_{j(j+1)}^r \cap \gamma^r \neq \emptyset$.  
%\end{enumerate} 
%\end{defn}
%As an example, the collection $\{U_i, U_{j(j+1)}\}_{ij}$, where $U_i = L_{a_i}(U)$, is a $\lambda_{Lx}$-family by (\ref{lambda_lxai}) and Remark~\ref{uii_rmk}. To any $\gamma$-family one can associate an isomorphism from $F(V)$ to $F(U)$ denoted 
%\[
%\iso(A_0, A_{01}, \cdots, A_{m(m+1)}, A_{m+1}) \quad \text{or just} \quad  \iso(\gamma),
%\]
%  and define as 
%\begin{eqnarray}  \label{fao_iso}
%\begin{array}{ccc}
%\iso(\gamma) = \iso(A_0, A_{01}, \cdots, A_{m(m+1)}, A_{m+1}) & = &   \\
% (F(A_{01}A_0))^{-1} \cdots (F(A_{i(i+1)}A_i))^{-1} F(A_{i(i+1)}A_{i+1}) \cdots F(A_{m(m+1)}A_{m+1}). &  & 
% \end{array}
%\end{eqnarray}
%Notice that  $\iso(\gamma)$ is defined in the same way as the isomorphism from Definition~\ref{iso_luvwk_defn}.  
\begin{lem} \label{abcd_lem}
Let $A, B, C, D \in \bku(M)$ such that for any $1 \leq r \leq k$, $C^r \subseteq A^r \cap B^r$, and $D^r \subseteq A^r \cap B^r$. Suppose there is a path $\beta^r \colon [0, 1] \lra A^r \cap B^r$ such that $\beta^r(0) \in C^r$ and $\beta^r(1) \in D^r$. Then $\iso(A, C, B) = \iso(A, D, B)$, where $\iso(A, C, B) \colon F(A) \lla F(B)$ is defined as 
\begin{eqnarray} \label{iso_acb}
\iso(A, C, B) = (F(CA))^{-1} \circ F(CB). 
\end{eqnarray} 
\end{lem}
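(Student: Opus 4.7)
My plan is to first handle the case where $C$ and $D$ admit a common sub-object in $\bku(M)$ (one that sits inside both), and then exploit the path $\beta = \{\beta^r\}_r$ to reduce the general case to this special case via a finite chain of comparisons. The key enabling fact throughout is Proposition~\ref{isotopy_equiv_prop}-(ii): an inclusion of disjoint unions of open balls that is a bijection on $\pi_0$ is automatically an isotopy equivalence. Since every object of $\bku(M)$ is a disjoint union of $k$ balls with matching component labels, this lets me promote componentwise set-theoretic inclusions to isotopy equivalences essentially for free and, via very-goodness of $F$, to isomorphisms in $\mcalc$.

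For the special case, I would assume there exists $E \in \bku(M)$ with $E^r \in \mcalb(M)$ and $E^r \subseteq C^r \cap D^r$ for all $r$. Then each of the inclusions $E \hookrightarrow C$, $E \hookrightarrow D$, $E \hookrightarrow A$, $E \hookrightarrow B$ (together with $C \hookrightarrow A$, $C \hookrightarrow B$, $D \hookrightarrow A$, $D \hookrightarrow B$) is an isotopy equivalence by Proposition~\ref{isotopy_equiv_prop}-(ii), and $F$ sends each to an isomorphism. Contravariant functoriality then gives $F(EA) = F(EC) \circ F(CA) = F(ED) \circ F(DA)$ and $F(EB) = F(EC) \circ F(CB) = F(ED) \circ F(DB)$, from which
\[
(F(CA))^{-1} \circ F(CB) \;=\; (F(EA))^{-1} \circ F(EB) \;=\; (F(DA))^{-1} \circ F(DB),
\]
which is exactly $\iso(A,C,B) = \iso(A,D,B)$.

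For the general case I would construct an intermediate chain of objects of $\bku(M)$ between $C$ and $D$ along $\beta$, each consecutive pair falling under the special case. For each $r$, $\beta^r([0,1]) \subseteq A^r \cap B^r$ is compact, and $\mcalb(M)$ is a basis for the topology of $M$, so a Lebesgue-number argument yields a common subdivision $0 = s_0 < s_1 < \cdots < s_{N+1} = 1$ and balls $E^r_j \in \mcalb(M)$ for $1 \leq j \leq N$, with $E^r_j \subseteq A^r \cap B^r$ and $\beta^r([s_{j-1},s_j]) \subseteq E^r_j$. Set $E^r_0 := C^r$, $E^r_{N+1} := D^r$ and $E_j := \bigsqcup_{r=1}^{k} E^r_j \in \bku(M)$. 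Because $\beta^r(s_j) \in E^r_j \cap E^r_{j+1}$ for each $j$ (using $\beta^r(0) \in C^r$ and $\beta^r(1) \in D^r$ at the endpoints), I can pick a basis ball $F^r_j \in \mcalb(M)$ inside $E^r_j \cap E^r_{j+1}$, and then $F_j := \bigsqcup_r F^r_j \in \bku(M)$ plays the role of $E$ in the special case applied to $(E_j, E_{j+1})$. This yields $\iso(A, E_j, B) = \iso(A, E_{j+1}, B)$ for every $0 \leq j \leq N$, and chaining gives $\iso(A, C, B) = \iso(A, D, B)$.

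The main point requiring care is bookkeeping: at every stage I must verify that the chosen intermediate balls truly lie in $A^r \cap B^r$ so that all inclusions occur inside a single diagram in $\bku(M)$, and that the subdivision can be made common to all $r$ (which is handled by a common refinement). Once the combinatorics of indexing the components by the fixed set $\{1,\dots,k\}$ is respected, Proposition~\ref{isotopy_equiv_prop}-(ii) takes care of every isotopy-equivalence verification automatically, and no further ingredient is needed beyond functoriality of $F$.
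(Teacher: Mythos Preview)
Your argument is correct and is essentially the same as the paper's: both cover each $\beta^r$ by a finite chain of basis balls inside $A^r\cap B^r$, pick smaller basis balls in consecutive overlaps, and then use very-goodness of $F$ together with functoriality to propagate the equality $\iso(A,C,B)=\iso(A,D,B)$ along the chain. The paper packages the whole chain into a single zigzag diagram with constant rows $A$ and $B$, whereas you isolate the ``common sub-object'' special case first and then iterate; this is purely an organizational difference. One cosmetic point: your use of $F_j$ for the intermediate object clashes with the functor $F$, so pick a different letter.
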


\begin{proof}
By the compactness of $[0, 1]$, there exists a collection $\{X_i^r\}_{i=0}^{n+1}$ of objects of $\mcalb(M)$ such that  $n$ is independent of $r$, $X^r_i \subseteq A^r \cap B^r$ for all $r, i$,  $X^r_0 = C^r, X^r_{n+1} = D^r, $ and $X^r_i \cap X^r_{i+1} \neq \emptyset$ for any $r, i$, and  the family $\{X_i^r\}_{i=0}^{n+1}$ forms an open cover of $\beta^r$ for all $r$. 
By the fact that  $\mcalb(M)$ is a basis for the topology of $M$, there exists another collection $\{X^r_{i(i+1)}\}_{i=0}^n$ of objects of $\mcalb(M)$ such that $X^r_{i(i+1)} \subseteq X^r_i \cap X^r_{i+1}$ for all  $r, i$.  Define 
$
X_i := \cup_{r=1}^k X^r_i,  \text{ and } X_{i(i+1)} := \cup_{r=1}^k X^r_{i(i+1)},
$
and consider the following commutative diagram of isotopy equivalences. 
\[
\xymatrix{A \ar@{=}[r]  & A \ar@{=}[r]  & A  & \cdots  \ar@{=}[l] \ar@{=}[r] & A & A \ar@{=}[r] \ar[l] & A  \\
   C \ar[u] \ar[d]  &  X_{01} \ar[l] \ar[r] \ar[u] \ar[d]  &  X_1 \ar[u] \ar[d] & \cdots \ar[l] \ar[r] \ar@{.>}[u] \ar@{.>}[d] & X_n \ar[u] \ar[d] & X_{n(n+1)} \ar[u] \ar[d] \ar[l] \ar[r] &  D \ar[u] \ar[d] \\
	B \ar@{=}[r]  & B \ar@{=}[r] & B   & \cdots  \ar@{=}[l]  \ar@{=}[r] & B \ar@{=}[r] & B \ar@{=}[r]  & B.}
\]
Applying $F$ to it, and using the fact that each square of the resulting diagram commutes, we get the desired result.  
\end{proof}

One can extend the definition of $\iso(\lambda_L)$ to any path (which does not necessarily come from an isotopy) in the following way. 

\begin{defn} \label{gammac_adm_defn}
For $1 \leq r \leq k$, let $\gamma^r \colon [0, 1] \lra W^r$ be a path such that $\gamma^r(0) \in U^r$ and $\gamma^r(1) \in V^r$, and let $\gamma := \{\gamma^r\}_r$. Let $c = \{c_0, \cdots, c_{m+1}\}$ be a family of points of $[0, 1]$ such that $c_0 = 0, c_{m+1} = 1, $ and $c_i \leq c_{i+1}$ for all $i$. A collection 
$
\{A_i, \ A_{j(j+1)}, \ 0 \leq i \leq m+1, \ 0 \leq j \leq m\}
$
of objects of $\bku(M)$ is called \emph{admissible with respect to $\{\gamma, c\}$} or just \emph{$\{\gamma, c\}$-admissible} if the following four conditions are satisfied for all $r$: (a)  $A^r_0 = U^r$ and $A^r_{m+1} = V^r$; (b) $A^r_{i(i+1)} \subseteq  A^r_i \cap A^r_{i+1}$; (c) $\gamma^r([c_{i-1}, c_i]) \subseteq A^r_i$;  (d) $\gamma^r(c_i) \in A^r_{i(i+1)}$.
\end{defn}
 
\begin{expl}
Consider the collection of paths $\lambda_{Lx} = \{\lambda^r_{Lx}\}_r$  and  the family $\mcalu_{a} = \{U_{i(i+1)}\}$ as before. If we set $U_i = L_{a_i}(U)$, then one can easily check that the collection 
$
\{U_i, U_{j(j+1)}\}_{i, j}
$
is $\{\lambda_{Lx}, a_x\}$-admissible. 
\end{expl}

Associated with an $\{\gamma, c\}$-admissible family $\{A_i, A_{j(j+1)}\}_{i, j}$ is an  isomorphism from $F(V)$ to $F(U)$ denoted 
$\iso\left(\gamma, c, \{A_i\}_{i=0}^{m+1}, \{A_{i(i+1)}\}_{i=0}^m\right)$  or just  $ \iso(\gamma, c, A_i, A_{i(i+1)})$ and defined as 
\begin{align}
\iso(\gamma, c, A_i, A_{i(i+1)}) & := &    (F(A_{01}A_0))^{-1} \cdots (F(A_{i(i+1)}A_i))^{-1} F(A_{i(i+1)}A_{i+1}) \cdots F(A_{m(m+1)}A_{m+1})  \label{fao_iso}. 
\end{align}
Notice that  $\iso(\gamma, c, A_i, A_{i(i+1)})$ is defined in the same way as the isomorphism from Definition~\ref{iso_luvwk_defn}.  
The following lemma says that $\iso(\gamma, c, A_i, A_{i(i+1)})$ does not depend on the choice of $\{c, A_i, A_{i(i+1)}\}$. 

\sloppy

\begin{lem} \label{isoaii_lem}
\begin{enumerate}
\item[(i)] If 
$
\{A_i, A_{j(j+1)}\}_{i, j} \text{ and } \{A_i, A'_{j(j+1)}\}_{i, j}
$
are both $\{\gamma, c\}$-admissible, then one has
$
\iso(\gamma, c, A_i, A_{i+1}) =  \iso(\gamma, c, A_i, A'_{i(i+1)}).
$
\item[(ii)]  If 
$
\{A_i, A_{j(j+1)}\}_{i, j}  \text{ and } \{B_i, B_{j(j+1)}\}_{i, j}
$
are both admissible with respect to $\{\gamma, c\}$, then 
$
\iso(\gamma, c, A_i, A_{i+1}) =  \iso(\gamma, c, B_i, B_{i+1}).
$
\item[(iii)] Let $T \subseteq [0, 1]$ be finite. Let $\{A_i, A_{j(j+1)}\}_{i, j}$ be an $\{\gamma, c\}$-admissible family. Let $\{B_s, B_{(l(l+1))}\}_{s, l}$ be admissible with respect to $\{\gamma, c \cup T\}$. Then we have 
$
\iso(\gamma, c, A_i, A_{i+1}) =  \iso(\gamma, c \cup T, B_s, B_{s(s+1)}).
$
\item[(iv)] Let $d = \{d_0, \cdots, d_{n+1}\}$ be another family of points of $[0, 1]$ such that $d_0 = 0, d_{n+1} = 1$, and $d_i \leq d_{i+1}$ for all $i$. If $\{A_i, A_{j(j+1)}\}_{i, j}$ is $\{\gamma, c\}$-admissible, and if $\{B_s, B_{l(l+1)}\}_{s, l}$ is $\{\gamma, d\}$-admissible, then 
$
\iso(\gamma, c, A_i, A_{i+1}) =  \iso(\gamma, d, B_s, B_{s(s+1)}).
$
\end{enumerate}
\end{lem}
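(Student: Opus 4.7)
The plan is to prove the four parts in the order (i), (ii), (iii), (iv), mirroring the structure of Lemmas~\ref{uii_upii_lem}, \ref{isoa_isoba}, and~\ref{isoab_isoab} from the isotopy case, with part (ii) being the genuinely new ingredient: in the isotopy case the sets $U_i = L_{a_i}(U)$ were forced by $L$, whereas here the $A_i$'s are chosen, so independence from that choice must be proved separately.

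For part (i), I mimic Lemma~\ref{uii_upii_lem}. Since $\gamma^r(c_i) \in A_{i(i+1)}^r \cap {A'}_{i(i+1)}^r$ and $\mcalb(M)$ is a basis of balls, pick $D_{i(i+1)}^r \in \mcalb(M)$ with $\gamma^r(c_i) \in D_{i(i+1)}^r \subseteq A_{i(i+1)}^r \cap {A'}_{i(i+1)}^r$, and set $D_{i(i+1)} = \sqcup_r D_{i(i+1)}^r$. By Proposition~\ref{isotopy_equiv_prop}(ii), each inclusion $D_{i(i+1)} \hra A_{i(i+1)}$ and $D_{i(i+1)} \hra {A'}_{i(i+1)}$ is an isotopy equivalence, since all components are balls. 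Stacking the $A$-zigzag, the $D$-zigzag, and the $A'$-zigzag into a three-row commutative diagram of isotopy equivalences whose outer vertical arrows are identities at each $A_i$, applying $F$, and using that every square commutes, yields the claim.

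For part (ii), I construct a common refinement $\{C_i, C_{i(i+1)}\}$. Set $C_0 = U$ and $C_{m+1} = V$; for $1 \leq i \leq m$ choose $C_i^r \in \mcalb(M)$ with $\gamma^r([c_{i-1}, c_i]) \subseteq C_i^r \subseteq A_i^r \cap B_i^r$, obtained by a compact-set variant of the argument in Lemma~\ref{existence2_lem} applied inside the ball intersection $A_i^r \cap B_i^r$ together with the basis property; then pick $C_{i(i+1)}^r \in \mcalb(M)$ with $\gamma^r(c_i) \in C_{i(i+1)}^r \subseteq C_i^r \cap C_{i+1}^r$. Proposition~\ref{isotopy_equiv_prop}(ii) again makes every inclusion an isotopy equivalence. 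A three-row commutative diagram (top $A$, middle $C$, bottom $B$) with identities at the endpoints, after applying $F$, gives the chain of equalities $\iso(\gamma, c, A_i, A_{i(i+1)}) = \iso(\gamma, c, C_i, C_{i(i+1)}) = \iso(\gamma, c, B_i, B_{i(i+1)})$.

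For part (iii), I induct on $|T|$; the base case $T = \emptyset$ is precisely (ii). For the inductive step it suffices to handle $T = \{t\}$ with $t \in [c_j, c_{j+1}]$. Following the strategy of Lemma~\ref{isoa_isoba}, exhibit a specific $\{\gamma, c \cup \{t\}\}$-admissible family $\{B_s, B_{s(s+1)}\}$ by duplicating $A_{j+1}$ and $A_{j(j+1)}$ in the spirit of formula~(\ref{family_uii}), so that the extra zigzag factor $(F(B_{(j+1)(j+2)} \hra A_{j+1}))^{-1} \circ F(B_{(j+1)(j+2)} \hra A_{j+1})$ is the identity and the whole formula collapses back to $\iso(\gamma, c, A_i, A_{i(i+1)})$; part (ii) then promotes this to arbitrary $\{B_s, B_{s(s+1)}\}$. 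Part (iv) is now immediate: set $e = c \cup d$, pick any $\{\gamma, e\}$-admissible $\{E_i, E_{i(i+1)}\}$ (existence by an analogue of Proposition~\ref{existence_prop}), and apply (iii) once with $T = e \setminus c$ and once with $T = e \setminus d$ to obtain $\iso(\gamma, c, A_i, A_{i(i+1)}) = \iso(\gamma, e, E_i, E_{i(i+1)}) = \iso(\gamma, d, B_s, B_{s(s+1)})$. The main obstacle is the technical step in (ii) of realizing the refinement $C_i^r$ as a single element of $\mcalb(M)$ containing the compact path segment $\gamma^r([c_{i-1}, c_i])$; unlike Remark~\ref{uii_rmk}, where the relevant compact set was a single point, here one must either assume $\mcalb(M)$ is rich enough that compact arcs fit inside basis balls, or subdivide the partition $c$ first using part (iii) to shorten each segment.
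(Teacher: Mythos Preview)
Your treatment of parts (i), (iii), and (iv) matches the paper's: (i) is the direct analogue of Lemma~\ref{uii_upii_lem}, (iii) is proved by induction on $|T|$ using (i) and (ii), and (iv) follows from (iii) via the common refinement $c\cup d$.

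The gap is in part (ii). Your argument requires, for each $1\le i\le m$ and each $r$, a single ball $C_i^r\in\mcalb(M)$ with
\[
\gamma^r([c_{i-1},c_i])\subseteq C_i^r\subseteq A_i^r\cap B_i^r.
\]
There is no reason such a ball exists: $A_i^r\cap B_i^r$ need not be a ball, and $\mcalb(M)$ is only a basis, so it is guaranteed to contain small balls around points, not balls engulfing a prescribed compact arc inside an arbitrary open set. You notice this obstacle yourself, but your proposed fix---refining $c$ via part (iii)---is circular, since your proof of (iii) already invokes (ii) both in the base case $T=\emptyset$ and in the inductive step.

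The paper sidesteps the problem entirely by not attempting to build intermediate sets $C_i$ at all. It chooses only the bridge balls: for each $i$ and $r$, a ball $C_{i(i+1)}^r\in\mcalb(M)$ with $\gamma^r(c_i)\in C_{i(i+1)}^r\subseteq A_{i(i+1)}^r\cap B_{i(i+1)}^r$, which exists because only the single point $\gamma^r(c_i)$ is involved. Then it invokes Lemma~\ref{abcd_lem} three times: once to replace $A_{i(i+1)}$ by $C_{i(i+1)}$ in $\iso(A_i,A_{i(i+1)},A_{i+1})$, once analogously for the $B$'s, and once---this is the key step---to show $\iso(A_i,C_{(i-1)i},B_i)=\iso(A_i,C_{i(i+1)},B_i)$ using the arc $\gamma^r|_{[c_{i-1},c_i]}$ as the path $\beta^r$ inside $A_i^r\cap B_i^r$. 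Combining these three equations telescopes the $A$-zigzag into the $B$-zigzag. Thus Lemma~\ref{abcd_lem} is precisely the tool that absorbs the arc, removing any need to trap it in a single basis ball.
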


\begin{proof}
The proof of (i) works exactly in the same way as that of Lemma~\ref{uii_upii_lem}. For (iii), its proof follows  from (i) and (ii) by  induction on the cardinality of $T$ (this is similar to the proof of Lemma~\ref{isoa_isoba}). The last part, (iv), is an immediate consequence of (iii). Now we prove the second part.  For all $r, i$ one has $A^r_{i(i+1)} \cap B^r_{i(i+1)} \neq \emptyset$ by the property (d) above. So, since $\mcalb(M)$ is a basis for the topology of $M$, there exists $C^r_{i(i+1)} \in \mcalb(M)$  such that  $\gamma^r(c_i) \in C^r_{i(i+1)}$ and 
$
C^r_{i(i+1)} \subseteq A^r_{i(i+1)} \cap B^r_{i(i+1)}.  
$
%Applying $F$ to the inclusions $A_{i(i+1)} \hookrightarrow A_i$, $A_{i(i+1)} \hookrightarrow A_{i+1}$, $B_{i(i+1)} \hookrightarrow B_i$, $B_{i(i+1)} \hookrightarrow B_{i+1}$, $C_{i(i+1)} \hookrightarrow A_{i(i+1)}$, $C_{i(i+1)} \hookrightarrow B_{i(i+1)}$, which are isotopy equivalences of course, and 
Using Lemma~\ref{abcd_lem}, we get the following three equations. 
$\iso(A_i, A_{i(i+1)}, A_{i+1}) = \iso(A_i, C_{i(i+1)}, A_{i+1})$,
and
$\iso(B_i, B_{i(i+1)}, B_{i+1}) = \iso(B_i, C_{i(i+1)}, B_{i+1})$ for  $0 \leq i \leq m$, 
and $\iso(A_i, C_{(i-1)i}, B_i) = \iso(A_i, C_{i(i+1)}, B_i)$, for $1 \leq i \leq m$. 
The desired result easily follows from those equations. 
%If $m =0$, the desired result easily follows from the first two equations. If $m \geq 1$, it follows from all those equations.
%We need another equation, which is obtained as follows. First of all, for all $r$, $1 \leq i \leq m$ one has $C^r_{(i-1)i} \subseteq A^r_i \cap B^r_i$ by (\ref{eqn0_iso}) and (b).  For the same reason, we have $C^r_{i(i+1)} \subseteq A^r_i \cap B^r_i$. Now, by (c) $\gamma^r([c_{i-1}, c_i])$ is a path in the intersection $A^r_i \cap B^r_i$ from $\gamma^r(c_{i-1}) \in C^r_{(i-1)i}$ to $\gamma^r(c_i) \in C^r_{i(i+1)}$. Applying now Lemma~\ref{abcd_lem}, we get 
%\begin{eqnarray}  \label{eqn3_iso}
%\iso(A_i, C_{(i-1)i}, B_i) = \iso(A_i, C_{i(i+1)}, B_i), \ 1 \leq i \leq m. 
%\end{eqnarray}
%Combining (\ref{eqn1_iso}), (\ref{eqn2_iso}), (\ref{eqn3_iso}), and the fact that the top and bottom arrows from the diagram are both the identities, we obtain the desired result \footnote{Notice that (\ref{eqn3_iso}) holds when $m \geq 1$. The case $m =0$ is readily obtained from (\ref{eqn1_iso}) and (\ref{eqn2_iso})}.  
\end{proof}

We can now prove the main result of this subsection. 

\begin{proof}[Proof of Proposition~\ref{isotopy_invariance_prop}]
%Let $x = \{x_1, \cdots, x_k\}$ be a configuration of $k$ points such that $x_r \in U^r, 1 \leq r \leq k$. Let $a_x = \{a_0, \cdots, a_{m+1}\}$ be admissible (in the sense of Definition~\ref{admissible_def}) with respect to  $\{x, L\}$. One can then consider the paths $\lambda^r_{Lx} \colon [0, 1] \lra W^r, 1 \leq r \leq k$,  as before (see (\ref{lambdar_lx})). By Definition~\ref{iso_lambda_defn}, one has  
%\[
%\iso(\lambda_L) = \iso(\lambda_{Lx})  = \iso(\mcalu_{a_x}, a_x, x, L). 
%\]
%In similar fashion, given another isotopy  $L' \colon U \times [0, 1] \lra W$ such that $L'_t(U) \in \mcalb(M)$ for all $t$, one has the isomorphism 
%\[
%\iso(\lambda_{L'}) = \iso(\lambda_{L'x})  = \iso(\mcalu_{a'_x}, a'_x, x, L).  
%\]
Our goal is to show that  $\iso(\lambda_L) = \iso(\lambda_{L'})$. 
%First of all, note that the paths $\lambda^r_{Lx}$ and $\lambda^r_{L'x}$  have the same starting point since 
%$
%\lambda^r_{Lx}(0) = \lambda^r_{L'x}(0) = x_r.
%$
%However,  $\lambda^r_{Lx}$ and $\lambda^r_{L'x}, 1 \leq r \leq k$ does not necessarily have the same ending point. But we can assume without loss of generality that they have because of the path-connectedness of every $V^r$.  
Since $W^r$ is diffeomorphic to an open ball, and then contractible, there exists a homotopy 
$
H^r \colon [0, 1] \times [0, 1] \lra W^r,  (s, t) \mapsto H_t^r(s)
$
such that $H^r_0 = \lambda^r_{Lx}$ and $H^r_1  = \lambda^r_{L'x}$. Let $H_t := \{H^r_t\}_{r=1}^k$.  Certainly the data involved in the definition of  $\iso(\lambda_L)$ determine a family $(U_0, U_{01}, \cdots, U_{m(m+1)}, U_{m+1})$, which is $\{\lambda_{Lx}, a\}$-admissible.    So
$
\iso(\lambda_L) = \iso(H_0, a, U_s, U_{s(s+1)})$ and $\iso(\lambda_{L'}) = \iso(H_1, a', U'_l, U'_{l(l+1)}),
$
where $a'$ is an admissible family with respect to $\{x, L'\}$. By the compactness of $[0, 1] \times [0, 1]$, for each $1\leq r \leq k$, there exist  a  subdivision  $\left\{[c_{i-1}, c_i] \times [d_j, d_{j+1}]\right\}_{i, j}$, $1 \leq i \leq m_r+1$, $0 \leq j \leq n_r$, of the rectangle $[0,1] \times [0, 1]$ (of course $c_0 = d_0 =0,$ and $c_{m_r+1} = d_{n_r+1} =1$), and   a family  $\{A^{rq}_p\}_{p, q}$, $0 \leq p \leq m_r+1$,  $0 \leq q \leq n_r+1$ of objects of $B(W^r)$ that satisfy the following two conditions:
(a)  for all $q$, $A_0^{rq} = U^r$ and $A_{m_r+1}^{rq} = V^r$; (b) $H^r\left([c_{i-1}, c_i] \times [d_j, d_{j+1}]\right) \subseteq A_i^{rj}$ for any $i, j$.
%The second condition implies that for any $i, j$, one has
%\begin{eqnarray} \label{observation}
%H_{d_j}^r(c_i) \in A_i^{rj} \cap A_{i+1}^{rj} \quad \text{and} \quad H_{d_{j+1}}^r(c_i) \in A_i^{rj} \cap A_{i+1}^{rj}.
%\end{eqnarray}
Since the objects of $B(M)$ form a basis for that topology of $M$, there exists $A^{rj}_{i(i+1)} \in B(W^r)$ such that 
$A_{i(i+1)}^{rj} \subseteq A_i^{rj} \cap A_{i+1}^{rj},$   $A_{i(i+1)}^{r(j+1)} \subseteq A_i^{rj} \cap A_{i+1}^{rj},$ and  $H^r_{d_j}(c_i) \in A_{i(i+1)}^{rj}$.
We can assume that all $m_r$'s are equal since one can refine the subdivision as many times as possible.  The same assumption can be made for $n_r$. So from now on, we will write $m$ for $m_r$ and $n$ for $n_r$. 
%We can assume that $m_r$ and $n_r$ are both independent of $r$ (one can refine the subdivision if needed). So from now on, we let $m_r = m$ and $n_r =n$. 
As usual, let 
$
A_i^j := \cup_{r =1}^k A_i^{rj}, \text{ and } A_{i(i+1)}^j := \cup_{r =1}^{k} A_{i(i+1)}^{rj}.  
$
Also let $c = \{c_0, \cdots, c_{m+1}\}$.  Clearly, for each $0 \leq j \leq n+1$, the collection 
$
\left\{A_i^j, A^j_{l(l+1)}\right\}_{i, j}$ with $0 \leq i \leq m+1$  and $0 \leq l \leq m$, is $\{H_{d_j}, c\}$-admissible, and one has the equation %Let $0 \leq j \leq n$. The rest of the proof is devoted to show that 
\begin{eqnarray} \label{iso_hdjc}
\iso\left(H_{d_j},c, A^j_i, A^j_{i(i+1)}\right) = \iso\left(H_{d_{j+1}},c, A^{j+1}_i, A^{j+1}_{i(i+1)}\right),
\end{eqnarray} 
which is obtained  by combining the equations
$\iso\left(A_i^j, A_{i(i+1)}^j, A_{i+1}^j\right) = \iso\left(A_i^j, A_{i(i+1)}^{j+1}, A_{i+1}^j\right)$, $0 \leq i \leq m,$
$\iso\left(A_i^j, A_{(i-1)i}^{j+1}, A_i^{j+1}\right) = \iso\left(A_i^j, A_{i(i+1)}^{j+1}, A_i^{j+1}\right),$ $1 \leq i \leq m,$ 
$\iso\left(A_0^j, A_{01}^{j+1}, A_0^{j+1}\right) = id$, and  $\iso\left(A_{m+1}^j, A_{m(m+1)}^{j+1}, A_{m+1}^{j+1}\right) =id.$
The first two equations come from Lemma~\ref{abcd_lem}, while the other ones come from condition (a) above. Now the desired result follows from  (\ref{iso_hdjc}) and Lemma~\ref{isoaii_lem}-(iv). 
%Combining now (\ref{iso1_eqnn}), (\ref{iso2_eqnn}), (\ref{iso3_eqnn}), and using the definition of $\iso(A_0^j, A_{01}^j, \cdots, A_{m(m+1)}^j, A_{m+1}^j)$  from (\ref{fao_iso}), it is straightforward to get (\ref{iso_hdjc}),
%which holds for  any $0 \leq j \leq n$. Thus 
%\[
%\iso\left(H_0,c, A^0_i, A^0_{i(i+1)}\right) = \iso\left(H_1,c, A^{n+1}_i, A^{n+1}_{i(i+1)}\right).
%\] 
%Since
%\[
%\iso\left(H_0,c, A^0_i, A^0_{i(i+1)}\right) = \iso\left(H_0, a_x, U_s, U_{s(s+1)}\right),
%\]
%and  
%\[
%\iso\left(H_1,c, A^{n+1}_i, A^{n+1}_{i(i+1)}\right) = \iso\left(H_1, a'_x, U'_l, U'_{l(l+1)}\right),
%\]
%by Lemma~\ref{isoaii_lem}-(iv),  the desired result follows by (\ref{iso_lambda_llp}) \footnote{Notice that (\ref{iso2_eqnn}) holds when $m \geq 1$. In the case $m=0$, the diagram (\ref{big_diagramn}) is reduces to 
%\[
%\xymatrix{F(A_0^j) \ar[rd] \ar[r] \ar@{=}[d] & F(A_{01}^j)  & F(A_1^j) \ar[l]  \ar[ld] \ar@{=}[d] \\
%F(A_0^{j+1}) \ar[r] & F(A_{01}^{j+1}) &  F(A_1^{j+1}), \ar[l]   }
%\]
%and the proof of the lemma immediately comes from (\ref{iso1_eqnn}), and (\ref{iso3_eqnn})}.
\end{proof}

\subsection{Characterization of very good homogeneous functors} \label{equiv_cat_thm_proof_subsection}

The aim here is to prove Theorem~\ref{equiv_cat_thm} announced earlier at the end of Subsection~\ref{definition_subsection}.

We will need the results obtained  in the previous subsections, and  three more lemmas. For the first one,  we need the following definition. A category $\mathcal{I}$ is said to be \textit{connected} if for any  objects $a, b \in \mcali$ there exists a zigzag 
$
\xymatrix{b=b_0 & b_1 \ar[l] \ar[r] & \cdots & b_m \ar[l]  \ar[r] & a=b_{m+1}}
$
of morphisms of $\mathcal{I}$. %which is denoted $(b, i_0, b_1, \cdots, b_m, i_m, a)$. 
 If $F \colon \mathcal{I} \lra \mcalc$ is a contravariant functor that sends every morphism to an isomorphism, then to any such a zigzag  one can associate an isomorphism $\text{Iso}(b, b_1, \cdots, b_m, a) \colon F(a) \lra F(b)$ defined in the same way as (\ref{fao_iso}). 
%\begin{eqnarray} \label{iso_ba}
%\text{Iso}(b, i_0, b_1, \cdots, b_m, i_m, a) = (F(i_0))^{-1} F(i_1)  \cdots  (F(i_{m-1}))^{-1}F(i_m).
%\end{eqnarray}
%If the category $\mathcal{I}$ is such that for any objects $c, d \in \mcali$, $\text{Hom}_{\mcali}(c, d)$ is either the emptyset or a single morphism, then $\text{Iso}(b, i_0, b_1, \cdots, b_m, i_m, a)$ is just denoted $\text{Iso}(b, b_1, \cdots, b_m, a)$. This is the case for the categories $\okmu$ and $\bkmu$. 
The following lemma is straightforward. 

\begin{lem} \label{lemma6_iso}
If for any $a, b \in \mathcal{I}$, $\text{Iso}(b, b_1, \cdots, b_m, a)$ does not depend on the choice of the zigzag between $a$ and $b$, then the limit of the  $F$ over $\mcali$ is isomorphic to $F(c)$ for any $c \in \mathcal{I}$. 
%Let $\mathcal{I}$ be a connected small category, and let $\mathcal{C}$ be a category that has all small limits. Consider a contravariant functor $F \colon \mathcal{I} \lra \mcalc$ that satisfies the following two properties: (a) $F$ sends any morphism to an isomorphism; (b) For every $a, b \in \mathcal{I}$, for every zigzag between $a$ and $b$, the associated isomorphism 
%\[\text{Iso}(b, i_0, b_1, \cdots, b_m, i_m, a) \colon F(a) \lra F(b)\]
%is unique.
%\footnote{The uniqueness means that given another zigzag $(b, j_1, c_1, \cdots, c_s, j_s, a)$, one has $\text{Iso}(b, i_1, b_1, \cdots, b_r, i_r, a) = \text{Iso}(b, j_1, c_1, \cdots, c_s, j_s, a)$}  
%Then for any $a \in \mcali$, the limit of the diagram $F$ over $\mcali$ is isomorphic to $F(a)$. That is,
%$
%\underset{b \in \mcali}{\text{lim}}\; F(b) \cong F(a). 
%$
\end{lem}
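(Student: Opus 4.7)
The plan is to realize $F(c)$ as the limit by constructing an explicit cone and then verifying the universal property. Since $\mcali$ is connected, for each object $a \in \mcali$ we may choose a zigzag from $c$ to $a$ and apply the hypothesis to obtain a well-defined isomorphism $\eta_a \colon F(c) \lra F(a)$, taken as the inverse of $\text{Iso}(c, c_1, \ldots, c_m, a)$ (which goes $F(a) \lra F(c)$ by convention). The independence of the choice of zigzag is precisely what makes $\eta_a$ canonical: nothing to choose once $c$ is fixed.

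Next I would check that the family $\{\eta_a\}_{a \in \mcali}$ forms a cone, i.e., that $F(f) \circ \eta_b = \eta_a$ for every morphism $f \colon a \lra b$ of $\mcali$. Given a zigzag $Z$ from $c$ to $b$ defining $\eta_b$, appending the single arrow $\xymatrix{b & a \ar[l]_{f}}$ produces a zigzag $Z'$ from $c$ to $a$, and the associated $\text{Iso}$ along $Z'$ is by construction $F(f) \circ \eta_b^{-1}$ (up to the directional convention). By the zigzag-independence hypothesis this equals $\eta_a^{-1}$, and inverting gives the cone identity. So $(F(c), \{\eta_a\})$ is a cone over $F$.

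It remains to check universality. Given any cone $(X, \{\psi_a \colon X \lra F(a)\}_{a \in \mcali})$, the only candidate for the mediating morphism $X \lra F(c)$ is $\psi_c$ (by considering the component at $c$), so uniqueness is automatic. For existence, I need $\psi_a = \eta_a \circ \psi_c$ for every $a$. This follows by unraveling the cone axioms along any zigzag $c = c_0, c_1, \ldots, c_m, c_{m+1} = a$: each arrow in the zigzag gives a relation $\psi_{c_i} = F(\cdot) \circ \psi_{c_{i+1}}$ or $\psi_{c_{i+1}} = F(\cdot) \circ \psi_{c_i}$, and composing (using that all the $F(\cdot)$ are isomorphisms) expresses $\psi_a$ as the alternating composition applied to $\psi_c$ — and that alternating composition is exactly $\eta_a$. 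Hence $(F(c), \{\eta_a\})$ satisfies the universal property and is the limit.

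The only delicate point I expect is bookkeeping the contravariant direction so the convention of $\text{Iso}(b, b_1, \ldots, a) \colon F(a) \lra F(b)$ matches the direction of $\eta_a \colon F(c) \lra F(a)$; there is no genuine difficulty, but one must be careful to invert at the right moments. Everything else is formal and uses only the stated hypothesis plus the fact that $F$ maps every morphism of $\mcali$ to an isomorphism.
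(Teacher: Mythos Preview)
Your argument is correct and is exactly the standard verification the paper has in mind; the paper itself gives no proof, declaring the lemma ``straightforward.'' The one place to tidy is the line ``the associated $\text{Iso}$ along $Z'$ is by construction $F(f)\circ\eta_b^{-1}$'': as written this does not typecheck, and what you want is $\text{Iso}_{Z'}=\eta_b^{-1}\circ F(f)^{-1}$, whence $\eta_a=F(f)\circ\eta_b$ after inverting---but you already flag this as a directional bookkeeping issue, and the substance is right.
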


%\begin{proof}
%Let $a$ be an object of $\mcali$, and let $b \in \mcali$ be another object. Then, since $\mcali$ is connected by assumption, there exists a zigzag like (\ref{zigzag_ini}). Let $\lambda_b$ denote the $\text{Iso}(b, i_1, b_1, \cdots, b_m, i_m, a)$ from (\ref{iso_ba}). By the property (ii) $\lambda_b \colon F(a) \lra F(b)$ is the unique isomorphism from $F(a)$ to $F(b)$. If $f \colon b' \lra b$ is a morphism of $\mcali$, and if $\lambda_{b'} \colon F(a) \lra F(b')$ denotes the unique isomorphism from $F(a)$ to $F(b')$, then the obvious triangle  
%\[
%\xymatrix{                  &    & F(b) \ar[dd]^-{F(f)} \\
 %         F(a) \ar[rru]^-{\lambda_b} \ar[rrd]_-{\lambda_{b'}}  &   &     \\
	%				                 &      &     F(b')}
%\]
%commutes by (ii). So the collection $\left\{\lambda_b \colon F(a) \lra F(b)\right\}_{b \in \mcali}$ is a cone in $\mcalc$. Certainly, by (ii), this cone satisfies the universal property for limits. Thus $\underset{b \in \mcali}{\text{lim}} \;F(b) \cong F(a)$. 
%\end{proof}

\begin{lem} \label{lemma7_iso}
Let $F \colon \bkmu \lra \mcalc$ be a very good  functor, and let $W \in \okmu$. Consider the full subcategory $\btku(W) \subseteq \bku(W)$ whose objects $U$ have the property that the canonical inclusion $U \hookrightarrow W$ is an isotopy equivalence. Then the limit of the restriction $F| \btku(W)$ is isomorphic to $F(\widetilde{U})$ for any $\widetilde{U} \in \btku(W)$. 
%That is, 
%\[
%\forall \widetilde{U} \in \btku(W), \ \  \underset{U \in \btku(W)}{\text{lim}} F(U) \cong F(\widetilde{U}). 
%\]
Of course the same result holds when the domain of $F$ is replaced by $\okmu$.  
\end{lem}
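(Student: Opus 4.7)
The approach is to verify the three hypotheses of Lemma~\ref{lemma6_iso} for the restriction $G := F|_{\btku(W)}$. Concretely, we must show that (a) $\btku(W)$ is nonempty and connected, (b) $G$ sends every morphism to an isomorphism, and (c) for any two objects $a, b \in \btku(W)$ the isomorphism $\iso(\ldots)$ produced from a zigzag between them is independent of the chosen zigzag. Part (b) is immediate, since the morphisms of $\bku(W)$, and hence of its full subcategory $\btku(W)$, are isotopy equivalences by Definition~\ref{bku_oku_defn}(i), which $F$ turns into isomorphisms.

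For (a), let $U, V \in \btku(W)$. Writing $W = \bigsqcup_r W^r$, match components so that $U^r, V^r \subseteq W^r$ for each $r$, and choose a path $\gamma^r \colon [0,1] \to W^r$ from a point of $U^r$ to a point of $V^r$. By compactness of $[0,1]$ and the basis property of $\mcalb(M)$, cover $\gamma^r([0,1])$ by a finite chain of balls $B^r_0 = U^r, B^r_1, \ldots, B^r_{m+1} = V^r$ from $\mcalb(M)$, all contained in $W^r$ and with consecutive balls intersecting, and then pick $B^r_{i(i+1)} \in \mcalb(M)$ with $B^r_{i(i+1)} \subseteq B^r_i \cap B^r_{i+1}$. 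The disjoint unions $B_i := \bigsqcup_r B^r_i$ and $B_{i(i+1)} := \bigsqcup_r B^r_{i(i+1)}$ lie in $\bku(W)$, and each has exactly one component in each component of $W$, so the inclusion into $W$ induces a bijection on $\pi_0$ and is an isotopy equivalence by Proposition~\ref{isotopy_equiv_prop}(ii). Hence $B_i, B_{i(i+1)} \in \btku(W)$ and the zigzag $U = B_0 \leftarrow B_{01} \to B_1 \leftarrow \cdots \to B_{m+1} = V$ lives in $\btku(W)$.

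The main obstacle is (c). Given a zigzag $Z \colon U = X_0 \leftarrow X_{01} \to X_1 \leftarrow \cdots \to X_{m+1} = V$ in $\btku(W)$, the plan is to first produce a $k$-tuple of paths $\gamma = (\gamma^1, \ldots, \gamma^k)$ in $W$ and a partition $c = \{c_0, \ldots, c_{m+1}\}$ of $[0,1]$ such that $\{X_i, X_{i(i+1)}\}$ is $\{\gamma, c\}$-admissible in the sense of Definition~\ref{gammac_adm_defn}; this is possible because each $X^r_i$ is a path-connected ball containing both $X^r_{(i-1)i}$ and $X^r_{i(i+1)}$, so one can sample $\gamma^r(c_i) \in X^r_{i(i+1)}$ and join consecutive samples by paths within $X^r_i$. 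Formula~(\ref{fao_iso}) then identifies the zigzag-induced isomorphism with $\iso(\gamma, c, X_i, X_{i(i+1)})$, which by Lemma~\ref{isoaii_lem}(iv) depends only on $\gamma$. It remains to show that two such $k$-tuples $\gamma, \gamma'$ (from $U$ to $V$) yield the same isomorphism. Since each $W^r$ is contractible and $U^r, V^r$ are path-connected, the paths $\gamma^r$ and $(\gamma')^r$ admit a homotopy $H^r \colon [0,1] \times [0,1] \to W^r$ with $H^r(0, t) \in U^r$ and $H^r(1, t) \in V^r$ for all $t$. The rectangle-subdivision argument inside the proof of Proposition~\ref{isotopy_invariance_prop} uses only such a homotopy of paths, not an isotopy of open sets, and therefore applies verbatim to yield $\iso(\gamma) = \iso(\gamma')$.

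With (a), (b), (c) verified, Lemma~\ref{lemma6_iso} gives $\lim G \cong F(\widetilde{U})$ for every $\widetilde{U} \in \btku(W)$. The parenthetical extension (where $F$'s domain is $\okmu$) is handled identically: the same chain-of-balls construction produces zigzags in the analogous full subcategory of $\mathcal{O}^{(k)}(W)$, and the same homotopy-of-paths argument establishes zigzag-invariance there.
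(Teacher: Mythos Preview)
Your proof is correct and follows essentially the same route as the paper: both verify the hypothesis of Lemma~\ref{lemma6_iso} by invoking the path/zigzag machinery developed for Proposition~\ref{isotopy_invariance_prop} (via Lemma~\ref{abcd_lem} and Lemma~\ref{isoaii_lem}). The paper's proof is a two-line appeal to Lemma~\ref{abcd_lem} and Proposition~\ref{isotopy_invariance_prop}; you have made explicit the point that the rectangle-subdivision argument in the proof of Proposition~\ref{isotopy_invariance_prop} only uses a homotopy of paths (with endpoints sliding inside $U^r$ and $V^r$), not the ambient isotopy itself, and hence applies to arbitrary zigzags in $\btku(W)$ rather than only those arising from isotopies---this is exactly what the paper's terse citation is implicitly relying on.
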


\begin{proof}
Thanks to Lemma~\ref{abcd_lem} and Proposition~\ref{isotopy_invariance_prop} one can see that the restriction $F| \btku(W)$ satisfies the hypothesis of Lemma~\ref{lemma6_iso}, which completes the proof. 
%Manifestly, the category $\btku(W)$ is connected since each component of $W$ is path-connected. So the property (i) from Lemma~\ref{lemma6_iso} is satisfied. Now we want to check (ii) from the same lemma. Let $U, V \in \btku(W)$, and let $\{U_i, U_{j(j+1), \ 0 \leq i \leq m+1,\  1 \leq j \leq m}\}$ be a family of objects of $\btku(W)$
%such that 
%\[
%U_0 = U, \quad U_{m+1} = V, \quad \text{and} \quad U_{j(j+1)} \subseteq U_j \cap U_{j+1}.
%\]
% Of course this latter inclusion is required to be an isotopy equivalence. Such a family is precisely a typical zigzag 
%\begin{eqnarray} \label{zigzag_bbot}
%\xymatrix{U & U_{01} \ar[l] \ar[r] & U_1 &\cdots \ar[l] \ar[r] & U_m & U_{m(m+1)} \ar[l] \ar[r] & V}
%\end{eqnarray}
%between $U$ and $V$ in the category $\btku(W)$. One can then consider the isomorphism 
%\[
%\iso(U_0, U_{01}, \cdots, U_{m(m+1)}, U_{m+1}) \colon F(U) \lla F(V), 
%\]
%as define in (\ref{fao_iso}). Thanks to Proposition~\ref{isotopy_invariance_prop}, this isomorphism does not depend on the choice of the zigzag from $U$ to $V$, which completes the proof.  
\end{proof}

\begin{lem} \label{very_good_lem}
Let $F \colon \okm \lra \mcalc$ be a very good functor. Then  the functor $\frk \colon \om \lra \mcalc$ defined as 
$
\frk (U) = \underset{V \in \okm}{\lim} F(V) 
$
is also very good \footnote{The functor $\frk$ is nothing but the right Kan extension of $F$ along the inclusion $\okm \hra \om$}.
\end{lem}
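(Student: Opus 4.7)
Let $i \colon U \hra U'$ be an isotopy equivalence and pick a witnessing isotopy $L \colon U \times [0,1] \to U'$ with $L_0 = i$ and $L_1 \colon U \xrightarrow{\cong} U'$ a diffeomorphism. The plan is to construct an explicit two-sided inverse $\psi \colon \frk(U) \to \frk(U')$ to the canonical restriction $\phi := \frk(i) \colon \frk(U') \to \frk(U)$, assembled out of $L$ together with the isotopy-induced isomorphisms from Subsection~\ref{isomorphism_isouaakl_subsection}.

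For each $V' \in \mcalo_k(U')$ set $V := L_1^{-1}(V')$, which lies in $\mcalo_k(U)$ since $L_1$ is a diffeomorphism onto $U'$. The restriction $L|_{V \times [0,1]}$ is then an isotopy in $U'$ from $V$ to $V'$, and Definition~\ref{iso_luvwk_defn} together with Lemmas~\ref{uii_upii_lem}, \ref{isoab_isoab} and \ref{admissible_kkp_lem} produces a well-defined isomorphism $\iso_{V'} \colon F(V') \to F(V)$. I would define the $V'$-component of $\psi$ by
\[
\psi_{V'} := \iso_{V'}^{-1} \circ \pi^U_V \colon \frk(U) \lra F(V'),
\]
where $\pi^U_V$ is the structural projection out of $\frk(U) = \lim_{\mcalo_k(U)} F$. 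To see that $\{\psi_{V'}\}$ is a cone and hence factors through $\frk(U') = \lim_{\mcalo_k(U')} F$, for any inclusion $V'_1 \subseteq V'_2$ I would set $V_j := L_1^{-1}(V'_j)$, select a common admissible partition of $[0,1]$ for the two isotopies $L|_{V_j}$, and pick nested intermediate balls $U^1_{l(l+1)} \subseteq U^2_{l(l+1)}$ in $\mcalb(M)$ (using that $\mcalb(M)$ is a basis); applying $F$ to the resulting commutative ladder and invoking Lemma~\ref{uii_upii_lem} yields the naturality square $F(V'_1 \hra V'_2) \circ \iso_{V'_2}^{-1} = \iso_{V'_1}^{-1} \circ F(V_1 \hra V_2)$, from which the cone identity follows.

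The hard part will be checking that $\phi$ and $\psi$ are mutually inverse. Unwinding $\phi \circ \psi = \id$ at $V_0 \in \mcalo_k(U)$ amounts to the identity $\iso_{V_0}^{-1} \circ \pi^U_{L_1^{-1}(V_0)} = \pi^U_{V_0}$ as maps $\frk(U) \to F(V_0)$, and the difficulty is that the intermediate balls realizing $\iso_{V_0}$ sit inside $L_s(L_1^{-1}(V_0)) \subseteq U'$ and may leave $U$, so the identity is not directly encoded by the cone conditions defining $\frk(U)$. To resolve this I would refine the admissible data, enlarging the partition of $[0,1]$ via Proposition~\ref{kk_admissible_prop} and shrinking the intermediate balls using that $\mcalb(M)$ is a basis, until, after refinement, each successive stage of the zigzag can be replaced by isotopy-equivalent objects of $\mcalo_k(U)$ connecting $L_1^{-1}(V_0)$ to $V_0$ inside $U$; Lemma~\ref{uii_upii_lem} then identifies the rewritten $\iso_{V_0}^{-1}$ with the chain of $F$-isomorphisms coming from the cone on $\frk(U)$, and the desired identity falls out of the cone relations. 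The reverse equality $\psi \circ \phi = \id$ is analogous, carried out inside $\mcalo_k(U')$ using the same admissible-family techniques.
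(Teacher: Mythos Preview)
Your construction of the candidate inverse $\psi$ and the cone/naturality check are essentially the same as the paper's: the paper packages your $\{\iso_{V'}\}$ as a natural isomorphism $\beta\colon F\circ\iota \to F|\mcalo_k(U)$ (with $\iota(V)=L_1(V)$), and your ladder argument is exactly its proof that $\beta$ is natural.

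The gap is in your ``hard part''. The identity $\iso_{V_0}^{-1}\circ\pi^U_{L_1^{-1}(V_0)}=\pi^U_{V_0}$ asks you to match a zigzag that lives in $\mcalo_k(U')$ against the cone conditions for $\frk(U)=\lim_{\mcalo_k(U)}F$. Your proposed fix, refining the partition and shrinking the intermediate balls until the zigzag can be replaced by one inside $\mcalo_k(U)$, does not work: the images $L_s(L_1^{-1}(V_0))$ genuinely may leave $U$ for intermediate $s$, and no amount of refining the subdivision or shrinking the $U_{i(i+1)}$ changes that, since the $U_{i(i+1)}$ must contain $L_s(K)$. Even if you produced some other zigzag in $\mcalo_k(U)$ between $L_1^{-1}(V_0)$ and $V_0$, you would still need to know that the resulting isomorphism agrees with $\iso_{V_0}$; the path-independence results available (Proposition~\ref{isotopy_invariance_prop}, Lemma~\ref{isoaii_lem}) are proved only when the ambient $W$ lies in $\mcalo^{(k)}$, not for a general $U'\in\mcalo(M)$.

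The paper sidesteps this entirely by a different organization. Rather than building a two-sided inverse, it factors the canonical map itself as $\lambda\circ\phi$, where $\phi$ is the isomorphism of limits induced by the isomorphism of indexing categories $\iota\colon\mcalo_k(U)\xrightarrow{\cong}\mcalo_k(U')$, $V\mapsto L_1(V)$, and $\lambda$ is the isomorphism of limits induced by the natural isomorphism $\beta$. Both factors are isomorphisms for structural reasons, so one only has to check that the canonical restriction equals $\lambda\phi$; unwinding, this is the identity $\pi^{U'}_V=\beta[V]\circ\pi^{U'}_{L_1(V)}$, and since the zigzag defining $\beta[V]$ lies entirely in $\mcalo_k(U')$, it follows immediately from the cone relations for $\frk(U')$. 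In your language: the easy direction $\psi\phi=\mathrm{id}_{\frk(U')}$ already suffices once you observe that your $\psi$ is itself an isomorphism, being the composite of the reindexing isomorphism with the map induced by the natural isomorphism $\beta^{-1}$.
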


\begin{proof}
Let $f \colon U \hra U'$ be  an isotopy equivalence of $\om$. Then there exists an isotopy $L \colon U \times [0, 1] \lra U'$ such that $L_0 = f$ and $L_1 \colon U \lra U'$ is a diffeomorphism. Our goal is to show that the canonical map
$
\psi \colon  \underset{V \in \ok(U')}{\lim} F(V) \lra  \underset{V \in \ok(U)}{\lim} F(V)
$ 
is an isomorphism. To do this, we will write $\psi$ as a composition $\psi = \lambda\phi$ of two isomorphisms:  
\[
\phi \colon  \underset{V \in \ok(U')}{\lim} F(V) \lra \underset{V \in \ok(U)}{\lim} F(L_1(V)) \quad \text{and} \quad \lambda \colon \underset{V \in \ok(U)}{\lim} F(L_1(V)) \lra \underset{V \in \ok(U)}{\lim} F(V). 
\]
We proceed in three steps. 

$\bullet$ Construction of $\phi$. Let $\iota \colon \ok(U) \lra \ok(U')$ be the functor defined as $\iota(V) = L_1(V)$. Clearly $\iota$ has an inverse since $L_1$ is a diffeomorphism.  %So $\iota$ is an isomorphism of categories. 
This implies that the  triangle
\[
\xymatrix{\ok(U) \ar[rr]^-{G} \ar[d]_{\iota}^{\cong}  &  &  \mcalc  \\
                \ok(U')  \ar[rru]_-{F},   &    &  }
\]
in which $G := F \iota$, induces an  isomorphism from the limit of $F$ to that of $G$. This isomorphism is nothing but  $\phi$. 

$\bullet$  The map $\lambda$ is induced by a natural isomorphism $\beta \colon G \lra F | \ok(U)$ defined in the following way. Let $V \in \ok(U)$, and let  $K \subseteq V$ be a compact subset such that $\pi_0(KV)$ is surjective.   Then, by Proposition~\ref{existence_prop}, there exists an admissible family $a = \{a_0, \cdots, a_{m+1}\}$ with respect to $\left\{K, L|V\times[0, 1]\right\}$.  By Definition~\ref{admissible_def}, such a family comes together with a collection $\mcalv_a = \{V_{01}, \cdots, V_{m(m+1)}\}$ that satisfies (\ref{admissible_cond}). Now  define $\beta[V] \colon F(L_1(V))  \lra F(V)$ as 
$
\beta[V] := \iso(\mcalv_a, a , K, L)
$
where $\iso(\mcalv_a, a , K, L)$ is the isomorphism introduced in Definition~\ref{iso_luvwk_defn}. 
%is independent of the choice of $\mcalv_a, a,$ and $K$ by Lemmas~\ref{uii_upii_lem}, \ref{isoab_isoab}, \ref{admissible_kkp_lem}. 
The naturality of $\beta$ is rather technical. Let $g \colon V \hra V'$ be a morphism of $\ok(U)$. The idea is to find an open cover, $\{(s-\epsilon, s+\epsilon_s)\}_{s \in I}$ of $I$, for which there is a commutative square 
\begin{eqnarray} \label{fvs_fvps0}
\xymatrix{F(V_{s-\epsilon_s})    &   &  & F(V_{s+\epsilon_s}) \ar[lll]_-{\cong}  \\
   F(V'_{s-\epsilon_s})   \ar[u]    & & &    F(V'_{s+\epsilon_s})  \ar[lll]^-{\cong} \ar[u] }
\end{eqnarray}
for each open interval $(s-\epsilon, s+\epsilon)$. Let $s \in [0, 1]$ and let $V_s:=L(V, s)$. Also let $0 \leq i \leq m+1$ such that $s \in [a_{i}, a_{i+1}]$, and let  
$
K'_s := \overline{V_{i(i+1)}}.
$
Certainly $K'_s$ is a compact subset of $V'_s$ by (\ref{admissible_cond2}) and the fact that $V \subseteq V'$. %Moreover $V_s \subseteq V'_s:= L(V', s)$ since $V \subseteq V'$. Therefore $K'_s$ is a compact subset of $V'_s$. 
Without loss of generality we can assume that $\pi_0(K'_sV'_s)$ is surjective (otherwise, if a component of $V'_s$ does not intersect $K'_s$, it suffices to add to $K'_s$ one point from that component.) Now let $b = \{b_s, \cdots, b_{r+1}\}$, with $b_s =s$ and $b_{r+1} =1$, be an admissible family with respect to $\left\{K'_s, L|V'_s \times [s, 1]\right\}$, and let   $\mathcal{V}' = \{V'_{s(s+1)}, \cdots, V'_{r(r+1)}\}$ be an associated collection satisfying (\ref{admissible_cond}). %which implies that $K'_s \subseteq V'_{s(s+1)}$. Since $K'_s$ contains $V_{i(i+1)}$, it follows that $V_{i(i+1)} \subseteq V'_{s(s+1)}$.  
Define
$
\epsilon_1 := \text{min}(b_{s+1}, a_{i+1}) - s.
$
By applying $F$ to the commutative diagram %\begin{eqnarray} \label{vsvps}
\[
\xymatrix{V_s \ar[d]  &  V_{i(i+1)} \ar[l] \ar[r] \ar[d]  & V_{s+\epsilon_1} \ar[d] \\
          V'_s      &  V'_{s(s+1)}  \ar[l]   \ar[r]  &   V'_{s+\epsilon_1},}
\]
and  by recalling the definition of $\iso(A, C, B)$ from (\ref{iso_acb}), we get the following commutative square
\begin{eqnarray} \label{fvs_fvps}
\xymatrix{F(V_s)    &   &  &  & F(V_{s+\epsilon_1}) \ar[llll]_-{\iso(V_s, V_{i(i+1)}, V_{s+\epsilon_1})}  \\
   F(V'_s)   \ar[u]    & & &  &  F(V'_{s+\epsilon_1}).  \ar[llll]^-{\iso(V'_s, V'_{s(s+1)}, V'_{s+\epsilon_1})} \ar[u] }
\end{eqnarray}
Similarly, there exist $\epsilon_2$ and a commutative square.
\begin{eqnarray} \label{fvs_fvps2}
\xymatrix{F(V_{s-\epsilon_2})    &   &  &  & F(V_{s}) \ar[llll]_-{\iso(V_{s-\epsilon_2}, V_{i(i+1)}, V_{s})}  \\
   F(V'_{s-\epsilon_2})   \ar[u]    & & &  &  F(V'_{s}).  \ar[llll]^-{\iso(V'_{s-\epsilon_2}, V'_{(s-1)s}, V'_{s})} \ar[u] }
\end{eqnarray}
Taking $\epsilon_s = \text{min}(\epsilon_1, \epsilon_2)$, and merging (\ref{fvs_fvps}) and (\ref{fvs_fvps2}), we get (\ref{fvs_fvps0}). Now, by using the compactness of $I$, we have a finite subcover  of $I$ and this produces a finite sequence of squares. Merging these squares, we get the obvious commutative square involving $F(V), F(V'), F(L_1(V)),$ and $F(L_1(V'))$, which proves the naturality of $\beta$.

$\bullet$ By construction, it is straightforward to check that $\psi = \lambda \phi$, which completes the proof. 
%For all  $V \in \ok(U)$,   one has $V_i \subseteq U'$ and $V_{i(i+1)} \subseteq U'$ for any $i$. This implies that the following triangle commutes. 
%\[
%\xymatrix{\frk(V)   &     &   \frk(L_1(V))  \ar[ll]_-{\beta[V]}   \\
  %                    &    \frk (U') \ar[lu]  \ar[ru] &  }
%\]
%Furthermore,  $\frk(V) \cong F(V)$ since $V$ is a terminal object of $\ok(V)$. For the same reason, we have $\frk(L_1(V)) \cong F(L_1(V))$.   Passing to the limit when $V$ runs over $\ok(U)$, we then deduce that $\psi = \lambda \phi$, which completes the proof. 
\end{proof}

\begin{rmk}
In \cite{wei99}, Lemma 3.8 asserts  the same thing as our Lemma~\ref{very_good_lem}  but for good functors $\om \lra \text{Top}$ into spaces instead. One might then ask the question to know why we provided another proof here, or why we did not adapt the proof of Weiss to our case.  The main reason is the fact that Weiss' proof uses geometric realizations of categories, which  lie naturally in spaces (and not in $\mcalc$!). 
\end{rmk}

We can now prove the main result of the section.

\begin{proof}[Proof of Theorem~\ref{equiv_cat_thm}]
%Recall the categories $\mcalf(\bkmu; \mcalc)$ and $\mcalf_k(\om, \mcalc)$ from Definition~\ref{cat_vghf_defn}. 
We want to prove that the categories $\mcalf(\bkmu; \mcalc)$ and $\mcalf_k(\om, \mcalc)$ are equivalent. Our strategy consists of doing that through two new categories. The first one, denoted   $\mcalf(\okmu; \mcalc)$, is the category of very good functors from $\okmu$ to $\mcalc$. And the second, denoted $\mcalf_k(\okm; \mcalc)$, is the category of very good functors  $F \colon \okm \lra \mcalc$ such that  $F| \mcalo_{k-1}(M) =0,$
where $0$ denotes the zero object of $\mcalc$. These categories fit into the diagram
\[
\xymatrix{\mcalf(\bkmu; \mcalc) \ar@<1ex>[r]^-{\psi_1} &  \mcalf(\okmu; \mcalc) \ar@<1ex>[l]^-{\phi_1} \ar@<1ex>[r]^-{\psi_2} &  \mcalf_k(\okm; \mcalc) \ar@<1ex>[l]^-{\phi_2} \ar@<1ex>[r]^-{\psi_3} &  \mcalf_k(\om, \mcalc) \ar@<1ex>[l]^-{\phi_3} }
\]
in which
 $\phi_1, \phi_2,$ and $\phi_3$ are the restriction functors,
 $\psi_1$ and $\psi_3$ are defined as 
$\psi_1(F)(U) = \underset{B \in \btku(U)}{\lim} F(B)$ and  $\psi_3(F)  = \frk,$
where $\frk$ is the functor defined in the statement of Lemma~\ref{very_good_lem},    and 
$\psi_2$ is defined as 
\[
\psi_2(F)(U)  =  \left\{ \begin{array}{cc}
                          F(U) & \text{ if }  U \in \okmu \\
													0   & \text{otherwise}. 
                         \end{array} \right.
\] 
Here  $\btku(U)$ is the category introduced in the statement of Lemma~\ref{lemma7_iso}.  From now on, our goal is to prove the following three claims. For $1 \leq i \leq 3$, the $i$th claim says that the functor $\psi_i$ is an equivalence of categories with inverse $\phi_i$. 

For the first claim, let $F \colon \bkmu \lra \mcalc$ be an object of $\mcalf(\bkmu; \mcalc)$. We first need  to show that $\psi_1(F)$ is very good. This easily follows from two applications of Lemma~\ref{lemma7_iso}. Certainly one has $\phi_1\psi_1 \cong id$ and $\psi_1\phi_1 \cong id$. 
The second claim follows immediately from the definitions. For the the third one,  
%Given $F \in \mcalf(\okmu; \mcalc)$, the functor $\psi_2(F)$ is very good because of the fact that if $i \colon U \hookrightarrow V$ is a morphism of $\okm$ then $i$ is an isotopy equivalence if and only if $\pi_0(i)$ is an isomorphism by Proposition~\ref{isotopy_equiv_prop}-(ii). The rest of the proof is obvious since by definition one easily has $\phi_2 \psi_2 =id$ and $\psi_2 \phi_2 =id$.
let $F \in  \mcalf_k(\okm; \mcalc)$. By Lemma~\ref{very_good_lem},  the functor $\psi_3(F) = \frk$ is very good. Moreover $\psi_3(F)$ is polynomial of degree $\leq k$ by Definition~\ref{polynomial_defn}-(ii). Furthermore, recalling the functor $T_k$ from Definition~\ref{polynomial_defn}-(iii), one has  
\[
\begin{array}{ccc}
T_{k-1} \frk (U)  = \underset{V\in \mcalo_{k-1}(U)}{\lim} \frk (V)  &  =  & \underset{V\in \mcalo_{k-1}(U)}{\lim} \left( \underset{W \in \ok(V)}{\lim} F(W)\right) \\
                                           &   \cong  & \underset{V\in \mcalo_{k-1}(U)}{\lim} F(V) \   \  \text{ since $V$ is a terminal object of $\ok(V)$} \\
                                           & \cong  & 0 \  \  \text{ since $F| \mcalo_{k-1}(M)= 0$.}
\end{array}
\]
Hence $\psi_3(F)$ is a very good homogeneous functor of degree $k$. Certainly one has natural isomorphisms $\phi_3\psi_3 \cong id$ and $\phi_3\psi_3 \cong id$, which completes the proof of the theorem. 
\end{proof}

%\begin{rmk}
%Notice that Theorem~\ref{equiv_cat_thm} also holds when $\mcalc$ is replaced by any category, with a zero object, that has all small limits. 
%\end{rmk}

As a consequence of  Theorem~\ref{equiv_cat_thm}, we have the following in which $F_k(M)$  denotes the unordered configuration space of $k$ points in $M$ with the subspace topology. 

\begin{coro} \label{equiv_cat_coro}
The category of very good homogeneous functors of degree $k$ is equivalent to the category of very good linear functors $\mcalo(F_k(M)) \lra \mcalc$. That is, there is an equivalence between  $\fkov$ and $\mcalf_1(\mcalo(F_k(M)); \mcalc)$, provided that $\mcalc$ has a zero object and all small limits. 
\end{coro}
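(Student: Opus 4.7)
The plan is to apply Theorem~\ref{equiv_cat_thm} twice: once to the manifold $M$ at level $k$, and once to the (smooth) manifold $F_k(M)$ at level $1$, and then identify the two resulting \lq\lq basis categories\rq\rq{}. Concretely, Theorem~\ref{equiv_cat_thm} applied to $M$ gives
\[
\mathcal{F}_k(\mathcal{O}(M); \mathcal{C}) \simeq \mathcal{F}(\mathcal{B}^{(k)}(M); \mathcal{C}),
\]
while the same theorem applied to $F_k(M)$, with the degree taken to be $1$, gives
\[
\mathcal{F}_1(\mathcal{O}(F_k(M)); \mathcal{C}) \simeq \mathcal{F}(\mathcal{B}^{(1)}(F_k(M)); \mathcal{C}).
\]
So it is enough to exhibit an isomorphism of categories $\mathcal{B}^{(k)}(M) \cong \mathcal{B}^{(1)}(F_k(M))$.

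To this end, I would first fix a basis $\mathcal{B}(M)$ for the topology of $M$ consisting of open subsets diffeomorphic to balls, and then build a basis $\mathcal{B}(F_k(M))$ for $F_k(M)$ out of it, by declaring its elements to be the open subsets of the form
\[
\langle B_1, \ldots, B_k \rangle := \{\{x_1,\ldots,x_k\} \in F_k(M) \mid \exists \text{ ordering with } x_i \in B_i \},
\]
where $B_1,\ldots,B_k \in \mathcal{B}(M)$ are pairwise disjoint. Since the $B_i$ are disjoint, the symmetric-group action on ordered tuples landing in $B_1 \times \cdots \times B_k$ is free with trivial orbits on the resulting subspace, so $\langle B_1,\ldots,B_k \rangle$ is diffeomorphic to $B_1 \times \cdots \times B_k$, hence to an open ball. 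Any point of $F_k(M)$ clearly admits arbitrarily small such neighbourhoods, so this really is a basis of the required type. The assignment $B_1 \sqcup \cdots \sqcup B_k \longmapsto \langle B_1,\ldots,B_k \rangle$ is then a bijection between the objects of $\mathcal{B}^{(k)}(M)$ and those of $\mathcal{B}^{(1)}(F_k(M))$.

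The main step is to match morphisms under this bijection. An inclusion $B_1 \sqcup \cdots \sqcup B_k \hookrightarrow B_1' \sqcup \cdots \sqcup B_k'$ is an isotopy equivalence in $\mathcal{O}(M)$ precisely when the induced map on components is a bijection, by Proposition~\ref{isotopy_equiv_prop}; the same criterion governs the induced inclusion $\langle B_1,\ldots,B_k \rangle \hookrightarrow \langle B_1',\ldots,B_k' \rangle$, since both sides are diffeomorphic to disjoint unions of balls. More constructively, any isotopy $L$ of a disjoint union of $k$ balls in $M$ (sending each $B_i$ into a single $B_{\sigma(i)}'$) lifts to an isotopy $\tilde L(\{x_1,\ldots,x_k\},t) := \{L_t(x_1),\ldots,L_t(x_k)\}$ of $\langle B_1,\ldots,B_k\rangle$ in $F_k(M)$; the disjointness of the $B_i$ (and $B_i'$) throughout the isotopy guarantees that $\tilde L$ avoids the fat diagonal, so it indeed lands in $F_k(M)$. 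Conversely, an isotopy in $F_k(M)$ between two of these basis elements projects coordinatewise to an isotopy of disjoint unions in $M$. This gives the desired isomorphism of categories $\mathcal{B}^{(k)}(M) \cong \mathcal{B}^{(1)}(F_k(M))$.

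Combining the three equivalences yields $\mathcal{F}_k(\mathcal{O}(M); \mathcal{C}) \simeq \mathcal{F}_1(\mathcal{O}(F_k(M)); \mathcal{C})$, which is the corollary. The main obstacle I anticipate is the morphism-matching step: one must verify that the lifted isotopies stay inside $F_k(M)$ (which is where disjointness of the $B_i$ is crucial) and that the correspondence is strictly functorial under composition. Everything else is a bookkeeping application of Theorem~\ref{equiv_cat_thm}.
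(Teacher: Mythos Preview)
Your proposal is correct and follows essentially the same route as the paper: apply Theorem~\ref{equiv_cat_thm} once to $M$ in degree $k$ and once to $F_k(M)$ in degree $1$, then identify $\mathcal{B}^{(k)}(M)$ with $\mathcal{B}^{(1)}(F_k(M))$ via the assignment $B_1\sqcup\cdots\sqcup B_k \mapsto \langle B_1,\ldots,B_k\rangle$. The paper simply asserts this last isomorphism as ``canonical'' without further comment, whereas you supply the explicit morphism-matching argument; one small simplification is that since each $L_t$ is already an embedding, injectivity alone forces the lifted map to miss the fat diagonal, so you need not track disjointness ``throughout the isotopy''.
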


\begin{proof}
Recall   $\mcalb(M)$ from the paragraph just before Definition~\ref{bku_oku_defn}. Let $\mathcal{B}^{(1)}(F_k(M)) \subseteq \mcalo^{(1)}(F_k(M))$ be the full subcategory  whose objects are the product of exactly $k$ pairwise disjoint objects of $\mcalb(M)$. Clearly objects of $\mathcal{B}^{(1)}(F_k(M))$ form a basis for the topology of $F_k(M)$. It is also clear that the category $\bkmu$ is canonically isomorphic to $\mathcal{B}^{(1)}(F_k(M))$. %the isomorphism sending $\coprod_{i=1}^k B_i$ to $\prod_{i=1}^k B_i$. 
This implies that the categories $\mcalf\left(\bkmu; \mcalc\right)$ and $\mcalf\left(\mathcal{B}^{(1)}(F_k(M)); \mcalc\right)$ are equivalent. Moreover, the categories $\mcalf\left(\bkmu; \mcalc\right)$ and $\mcalf\left(\mathcal{B}^{(1)}(F_k(M)); \mcalc\right)$ are equivalent as well as $\mcalf\left(\mathcal{B}^{(1)}(F_k(M)); \mcalc\right)$ and $\mcalf_1\left(\mcalo(F_k(M)); \mcalc\right)$ 
%That is,
%\begin{eqnarray} \label{equiv1_cat}
%\mcalf\left(\bkmu; \mcalc\right) \simeq \mcalf\left(\mathcal{B}^{(1)}(F_k(M)); \mcalc\right).
%\end{eqnarray}
%On the other hand, one has  
%\begin{eqnarray} \label{equiv2_cat}
%\mcalf\left(\bkmu; \mcalc\right) \simeq \mcalf_k\left(\om; \mcalc\right) 
%\end{eqnarray}
%and
%\begin{eqnarray} \label{equiv3_cat}
%\mcalf\left(\mathcal{B}^{(1)}(F_k(M)); \mcalc\right) \simeq \mcalf_1\left(\mcalo(F_k(M)); \mcalc\right)
%\end{eqnarray}  
by Theorem~\ref{equiv_cat_thm}. This proves the corollary. 
%(\ref{equiv1_cat}),  (\ref{equiv2_cat}), and  (\ref{equiv3_cat}). 
\end{proof}

%In the follow up paper \cite{paul_don17-2}, the target category of our functors is a model category, and we  work with \textit{good functors} instead (namely, functors that send isotopy equivalences to weak equivalences). We  show \cite[Theorem 1.3]{paul_don17-2}, which can be viwed as the \lq\lq good version\rq\rq{} of Corollary~\ref{equiv_cat_coro}. 

\section{Very good  functors}  \label{vgf_section}

The goal of this section is to prove the main result of the paper: Theorem~\ref{main_thm_paper}. The proof, which will be done at the end of Subsection~\ref{proof_main_thm_subsection}, goes through two big steps. The first step (Theorem~\ref{equiv_cat_thm}) has been already accomplished in Section~\ref{vghf_section}. The second one is  Theorem~\ref{crucial_thm} below, which roughly says that the category of very good functors is equivalent to the category of functors from the fundamental groupoid of $M$ to $\mcalc$. Both steps are connected by Proposition~\ref{equiv_cat_vectn_prop}, which involves the concept of \textit{very good covers} that we now explain. 

 %We begin with the notion of \textit{very good cover}, which is will be needed in the proof of Theorem~\ref{main_thm_paper}. 

%Let $M$ be a smooth manifold endowed with its canonical topology. 

\subsection{Very good covers} \label{vgc_subsection}

In this subsection, we introduce the notion of \textit{very good cover} (see Definition~\ref{gc_defn} below), $\mcalu$, of $M$. We show that such a cover produces a natural basis, $\mcalb_{\mcalu}$, of open balls for the topology of $M$. As posets, $\mcalu$ is smaller than $\mcalb_{\mcalu}$, but the categories $\mcalf(\mcalu; \mcalc)$ and  $\mcalf(\mcalb_{\mcalu}; \mcalc)$ of very good  functors are equivalent by Proposition~\ref{equiv_cat_vectn_prop} whose proof is the main goal here. 
%At the end of the subsection, we state Theorem~\ref{crucial_thm}. 

\begin{defn} \label{gc_defn}
An open cover $\mcalu = \{U_{\sigma}\}_{\sigma}$ of $M$ is called \emph{very good} if it satisfies the following four conditions.
\begin{enumerate}
\item[(C0)] Each $U_{\sigma}$ is diffeomorphic to an open ball.
\item[(C1)] For every $\sigma, \lambda$, the intersection $U_{\sigma} \cap U_{\lambda}$ is either the emptyset or a finite union of elements of $\mcalu$.
\item[(C2)] For every $\lambda$, the set $\left\{U_{\sigma}| \ U_{\sigma} \subseteq U_{\lambda}\right\}$ is finite.
\item[(C3)] For every open subset $B$ diffeomorphic to an open ball such that $B$ is contained in some $U_{\lambda}$, there exists a smallest (with respect to the order $U_{\sigma} \leq U_{\lambda}$ if and only if $U_{\sigma} \subseteq U_{\lambda}$) $U_{\sigma_B} \in \mcalu$ such that $B \subseteq U_{\sigma_B}$. 
\end{enumerate}
\end{defn}

\begin{rmk} \label{vg_cover_rmk}
If one replaces (C1) by

$(\widetilde{C}1)$ for every $\sigma, \lambda$, the intersection $U_{\sigma} \cap U_{\lambda}$ is either the emptyset or  an element of $\mcalu$, 

then (C2) will imply (C3). Indeed, suppose we have $(\widetilde{C}1)$ and (C2), and let $B \subseteq U_{\lambda}$ for some $\lambda$. Then the set $A = \{\alpha | \ B \subseteq U_{\alpha} \subseteq U_{\lambda}\}$ is finite because of (C2). Take then $U_{\sigma_B} = \cap_{\alpha_i \in A} U_{\alpha_i}$. This latter intersection lies in $\mcalu$ because of $(\widetilde{C}1)$. We thus get another definition of a very good cover with only three axioms: (C0),  $(\widetilde{C}1)$ and (C2). 
\end{rmk}

\begin{expl} \label{very_good_cover_expl}
Take $M = S^1 = \{(x, y) \in \rbb^2| \ x^2 + y^2 =1\}$.
\begin{enumerate}
 \item[(i)] Consider the open subsets $U_{\sigma_1} = S^1 \backslash \{(0, 1)\}, U_{\sigma_2} = S^1 \backslash \{(0, -1)\}, U_{\sigma_3} = \{(x, y) \in S^1| \ -1 \leq x < 0\}$ and $U_{\sigma_4} = \{(x, y) \in S^1| \ 0 < x \leq 1\}$. Then it is straightforward to check that the family $\mcalu  = \{U_{\sigma_i}\}_i$ is a very good cover of $S^1$. Note that the axiom $(\widetilde{C}1)$ does not hold here since $U_{\sigma_1} \cap U_{\sigma_2} = U_{\sigma_3} \cup U_{\sigma_4}$. 
\item[(ii)] Another example of a very good cover of $S^1$ is a cover with six open arcs $\mcalu' = \{U_{\lambda_i}\}_{i=1}^3 \cup \{U_{\lambda_{ij}}\}_{i < j}$ such that $U_{\lambda_{ij}} = U_{\lambda_i} \cap U_{\lambda_j}$. Contrary to $\mcalu$, the cover $\mcalu'$ satisfies $(\widetilde{C}1)$. 
\end{enumerate}
\end{expl}

Notice that the cover $\mcalu'$ %from Example~\ref{very_good_cover_expl} 
comes from a triangulation of $S^1$ with three $0$-simplices, and three $1$-simplices, while the cover $\mcalu$ 
%from the same example 
is not determined by a triangulation. In general, given a \textit{triangulation} $\tm$ 
%\footnote{By \textit{triangulation} of $M$, we mean a simplicial complex homeomorphic to $M$.} 
of a smooth manifold $M$, %if $S(\tm)$ denotes the set of simplices of $\tm$, 
one can always define a cover 
$
\utm = \{U_{\sigma}\}_{\sigma \in \tm}
$ 
of $M$ such that each $U_{\sigma}$ is diffeomorphic to an open ball, and 
\begin{eqnarray} \label{axiom_triangulation_cover}
\sigma_1 \cap \sigma_2 = \sigma \quad \mbox{if and only if} \quad U_{\sigma_1} \cap U_{\sigma_2} = U_{\sigma}.
\end{eqnarray}
Such a cover can be obtained in the following way. First take two barycentric subdivisions of $\tm$, and then define $U_{\sigma}$ as the interior of the star of $\sigma$. This  is indeed homeomorphic to an open ball $B^n, n:= \mbox{dim}(M)$  \cite[Proposition 6.3]{gallier08}. To see that $U_{\sigma}$ is diffeomorphic to $B^n$,   we need to deal with two cases. If $n \neq 4$, then there exists a unique smooth structure on $\rbb^n$ (see \cite[Section 2.4]{buo03}), and  this implies that the smooth manifold $U_{\sigma}$ is diffeomorphic to $B^n$. If $n = 4$, one can see that the smooth manifolds  $U_{\sigma}$ and $B^n$ are \textit{combinatorially equivalent} \footnote{Two smooth manifolds are said to be \textit{combinatorially equivalent} if they possess isomorphic $C^2$ triangulations.}, which implies the desired result by  Corollary 6.6 from \cite{mun60}. 
%by suitably thickening each simplex by a fixed $0<\epsilon <<1$ small enough. By convention $U_{\emptyset} = \emptyset$. 
From now on, a triangulation $\tm$ is fixed once and for all.  

\begin{prop} \label{vg_cover_prop}
The cover $\utm$ is very good.
\end{prop}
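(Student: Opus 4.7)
My plan is to check the four axioms (C0)--(C3) one at a time. Axiom (C0) has already been verified in the paragraph preceding the statement: each $U_\sigma$ is diffeomorphic to an open ball by uniqueness of smooth structures on $\mathbb{R}^n$ when $n \neq 4$, and by a combinatorial-equivalence argument when $n = 4$. So this axiom requires no new work.

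For the second axiom, I would aim for the stronger form $(\widetilde{C}1)$ from Remark~\ref{vg_cover_rmk}, which will then feed into (C3). Given $U_{\sigma_1}, U_{\sigma_2} \in \utm$, the simplicial-complex axiom for $\tm$ tells me that $\sigma_1 \cap \sigma_2$ is either empty or a common face $\sigma \in \tm$. In the latter case, (\ref{axiom_triangulation_cover}) gives $U_{\sigma_1} \cap U_{\sigma_2} = U_\sigma \in \utm$. In the former case, I would show $U_{\sigma_1} \cap U_{\sigma_2} = \emptyset$ by noting that any point in the intersection lies in a unique open simplex $\tau$ having both $\sigma_1$ and $\sigma_2$ as faces, which would then force $\sigma_1 \cap \sigma_2$ to be a common face and contradict the hypothesis.

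For (C2), I would translate $U_\sigma \subseteq U_\lambda$ into the equivalent form $U_\sigma \cap U_\lambda = U_\sigma$ and apply (\ref{axiom_triangulation_cover}) to reduce this to a combinatorial face relation between $\sigma$ and $\lambda$. Local finiteness of the triangulation (automatic for a triangulation of a smooth manifold) then bounds the number of simplices of $\tm$ standing in this relation to a fixed $\lambda$, giving (C2). Finally, with $(\widetilde{C}1)$ and (C2) established, (C3) follows at once from Remark~\ref{vg_cover_rmk}.

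The main point of care will be unpacking the correspondence (\ref{axiom_triangulation_cover}) in the empty-intersection case and in the direction of (C2), since both require tracking how the construction of $U_\sigma$ as the interior of the star (after two barycentric subdivisions) translates simplicial incidence relations back into containment of open subsets of $M$. Once that dictionary is in place, the verification of all four axioms becomes a short combinatorial exercise.
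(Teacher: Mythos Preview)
Your approach is correct and coincides with the paper's: verify (C0) and the stronger axiom $(\widetilde{C}1)$ directly from the construction and (\ref{axiom_triangulation_cover}), deduce (C2) from (\ref{axiom_triangulation_cover}), and then obtain (C3) via Remark~\ref{vg_cover_rmk}. One small remark: once (\ref{axiom_triangulation_cover}) gives $\sigma\cap\lambda=\sigma$, the set $\{U_\sigma\mid U_\sigma\subseteq U_\lambda\}$ is indexed by the faces of $\lambda$, which is finite for any simplex---so local finiteness of the triangulation is not actually what is being used in (C2).
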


\begin{proof}
The axioms (C0) and $(\widetilde{C}1)$ are satisfied by construction, while (C2) follows from (\ref{axiom_triangulation_cover}). 
\end{proof}

%For the next proposition, recall that the topology of $M$ is the canonical one. Namely, the topology generated by the domains of charts of the maximal atlas. 
%(which defines the manifold structure of $M$).  

\begin{prop} \label{basis_prop}
Let $\mcalu$ be any very good cover of $M$. Then the set
\[
\mcalb_{\mcalu} = \left\{B \mbox{ diffeomorphic to an open ball such that } B \subseteq U_{\sigma} \mbox{ for some } U_{\sigma} \in \mcalu \right\}
\]
forms a basis for the topology of $M$. 
\end{prop}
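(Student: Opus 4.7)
The plan is to show the two equivalent conditions that characterize a basis: that every element of $\mcalb_{\mcalu}$ is open in $M$ (trivial, since each such $B$ is by definition diffeomorphic to an open ball and hence open) and that for every open $V \subseteq M$ and every $x \in V$ there exists some $B \in \mcalb_{\mcalu}$ with $x \in B \subseteq V$. The second condition is the only thing that requires argument.

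First I would use the fact that $\mcalu$ is a cover to pick some $U_{\sigma}$ containing $x$. Then $V \cap U_{\sigma}$ is an open neighborhood of $x$ in $M$. Since $M$ is a smooth manifold, I can pick a smooth chart $\varphi \colon W \lra \rbb^n$ around $x$ with $W \subseteq V \cap U_{\sigma}$ and $\varphi(x) = 0$, and then for sufficiently small $\varepsilon > 0$ the preimage $B := \varphi^{-1}(B_{\varepsilon}(0))$ is diffeomorphic to an open ball in $\rbb^n$ and satisfies $x \in B \subseteq V \cap U_{\sigma}$. In particular $B \subseteq U_{\sigma}$, so $B \in \mcalb_{\mcalu}$, and $B \subseteq V$, as required.

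Notice that this proof only uses the fact that $\mcalu$ is an open cover of $M$ together with axiom (C0) (that each $U_{\sigma}$ is diffeomorphic to an open ball, which guarantees the $U_{\sigma}$'s themselves lie in $\mcalb_{\mcalu}$). Axioms (C1), (C2), and (C3) are not needed here; they are the content of the definition that will matter in the later constructions that invoke $\mcalb_{\mcalu}$ together with $\mcalu$ simultaneously (in particular for Proposition~\ref{equiv_cat_vectn_prop}).

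There is no real obstacle in this proof: it reduces to the standard fact that in any smooth manifold the open subsets diffeomorphic to balls contained in a given open set form a local basis at each point. The only thing to double-check is the wording of condition (C3), to make sure the phrase ``smallest $U_{\sigma_B}$'' is not required to exist before we declare $B$ to be in $\mcalb_{\mcalu}$; inspecting the definition of $\mcalb_{\mcalu}$ in the statement, we only need $B \subseteq U_{\sigma}$ for \emph{some} $U_{\sigma} \in \mcalu$, so the construction above suffices.
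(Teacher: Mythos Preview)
Your proof is correct and follows essentially the same elementary route as the paper: verify directly that every point of every open set is contained in some $B \in \mcalb_{\mcalu}$ inside that open set. The paper's one-line proof cites axiom (C1) together with the inclusion $\mcalu \subseteq \mcalb_{\mcalu}$; you instead observe---correctly---that (C1) is not actually needed, since once you have a small chart-ball $B \subseteq V \cap U_{\sigma}$ the containment $B \subseteq U_{\sigma}$ already puts $B$ in $\mcalb_{\mcalu}$. So your argument is slightly sharper in its hypotheses, but otherwise the two proofs coincide.
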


\begin{proof}
This follows immediately from the axiom (C1), and the fact that $\mcalu \subseteq \mcalb_{\mcalu}$. 
%By definition it is clear that $\mcalu \subseteq \mcalb_{\mcalu}$. So $\mcalb_{\mcalu}$ is a cover of $M$. Moreover, for $A, B  \in \mcalb_{\mcalu}$, there exist $U_{\sigma}, U_{\lambda} \in \mcalu$ such that $A \subseteq U_{\sigma}$ and $B \subseteq U_{\lambda}$. This implies that $A \cap B \subseteq U_{\sigma} \cap U_{\lambda}$. But by the axiom $(C1)$ there is $U_{\alpha_1}, \cdots, U_{\alpha_k} \in \mcalu$ such that $U_{\sigma} \cap U_{\lambda} = \cup_i U_{\alpha_i}$. So for any point $x \in A \cap B$, one can find an open ball $D$ containing $x$ and contained in some $U_{\alpha_i}$ (it suffices to take $D$  small enough). This completes the proof. 
\end{proof}

%If one thinks $\mcalu$ and  $\mcalb_{\mcalu}$ as categories (morphisms being inclusions),  then one can easily see that $\mcalu$ is a subcategory of $\mcalb_{\mcalu}$ by definition. Consider now the very good cover $\utm$ of $M$ as above, and let $\butm$ be its associated basis as in Proposition~\ref{basis_prop}.  By definition, any morphism of $\butm$ is an inclusion of one open subset diffeomorphic to an open ball inside another one. This implies every morphism of $\butm$ is an isotopy equivalence. 

\begin{rmk} \label{bofm_butm}
Recall the notation ``$\mcalb(M)$'' introduced right before Definition~\ref{bku_oku_defn}. If one takes $\mcalb(M) = \butm$, then  by Definition~\ref{bku_oku_defn}-(i) it is easy to see that the posets $\bou(M)$ and $\butm$ coincide. That is, 
$
\bou(M) = \butm. 
$
\end{rmk}

Recall the notation $\mcalf(\mcala; \mcalc)$ from Section~\ref{notation_section}. 

%Let $\mcalc$ be a category. If $\mcala$ is a subcategory of $\om$, we will write $\mcalf(\mcala; \mcalc)$ to denote the category of very good contravariant functors (see Definition~\ref{verygood_defn}) from $\mcala$ to $\mcalc$.  Of course, morphisms of  $\mcalf(\mcala; \mcalc)$ are natural transformations. 

\begin{prop} \label{equiv_cat_vectn_prop}
Let $\mcalc$ be any category. Then the categories $\mcalf(\butm; \mcalc)$ and $\mcalf(\utm; \mcalc)$ are equivalent. 
\end{prop}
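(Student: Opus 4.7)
The plan is to establish the equivalence by producing explicit functors in both directions and checking they are quasi-inverse. Since every $U_\sigma \in \utm$ is diffeomorphic to an open ball and is contained in itself, it lies in $\butm$, so $\utm$ is naturally a subposet of $\butm$. This gives a restriction functor
\[
R \colon \mcalf(\butm; \mcalc) \lra \mcalf(\utm; \mcalc), \qquad R(G) = G|_{\utm}.
\]
This is well-defined: any isotopy equivalence in $\utm$ is in particular an inclusion in $\om$ between objects of $\butm$, so a morphism in $\butm$, so $G$ sends it to an isomorphism.

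For the candidate inverse, I would use axiom (C3) crucially. Given $F \in \mcalf(\utm; \mcalc)$, for every $B \in \butm$ let $U_{\sigma_B} \in \utm$ be the smallest element of $\utm$ containing $B$ (which exists by (C3)), and set $E(F)(B) := F(U_{\sigma_B})$. For a morphism $B \hra B'$ in $\butm$, minimality of $U_{\sigma_{B'}}$ applied to the chain $B \subseteq B' \subseteq U_{\sigma_{B'}}$ gives $U_{\sigma_B} \subseteq U_{\sigma_{B'}}$, and I define $E(F)(B \hra B') := F(U_{\sigma_B} \hra U_{\sigma_{B'}})$. Functoriality is immediate from minimality and the contravariance of $F$. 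Moreover $E(F)$ is automatically very good: the inclusion $U_{\sigma_B} \hra U_{\sigma_{B'}}$ is a morphism between two sets each diffeomorphic to an open ball (hence connected), so Proposition~\ref{isotopy_equiv_prop}-(ii) makes it an isotopy equivalence, which $F$ sends to an isomorphism.

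Finally, I would verify the natural isomorphisms $R \circ E \cong \id$ and $E \circ R \cong \id$. For the first, if $U_\sigma \in \utm$ then $U_\sigma$ itself is the smallest element of $\utm$ containing $U_\sigma$, so $(R \circ E)(F) = F$ on the nose. For the second, given $G \in \mcalf(\butm; \mcalc)$ and $B \in \butm$, the inclusion $B \hra U_{\sigma_B}$ is an isotopy equivalence (both connected open balls, again by Proposition~\ref{isotopy_equiv_prop}-(ii)) and lies in $\butm$, so $G$ sends it to an isomorphism $G(U_{\sigma_B}) \srel G(B)$. Naturality in $B$ comes from applying $G$ to the commutative square of inclusions with corners $B, B', U_{\sigma_B}, U_{\sigma_{B'}}$ in $M$. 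This assembles the desired natural isomorphism $E \circ R \cong \id_{\mcalf(\butm; \mcalc)}$.

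The main point on which everything rests is axiom (C3), which makes the assignment $B \mapsto U_{\sigma_B}$ canonical and functorial, together with Proposition~\ref{isotopy_equiv_prop}-(ii), which converts connectedness of open balls into the isotopy-equivalence property needed to invoke very goodness. Unlike Theorem~\ref{equiv_cat_thm}, there is no need for the admissible-family machinery of Subsection~\ref{admissible_family_subsection} here, so the main obstacle is really just the bookkeeping to check functoriality and naturality; I do not anticipate any genuinely difficult step.
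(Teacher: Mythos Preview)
Your proposal is correct and follows essentially the same approach as the paper: define the restriction functor in one direction, use axiom (C3) to define the inverse via $B \mapsto U_{\sigma_B}$, and check the two composites are (naturally isomorphic to) the identity. In fact you supply more detail than the paper does (the paper leaves the very goodness of the extension and the natural isomorphism $\psi\phi \cong \id$ as a straightforward check); one small slip is that it is the minimality of $U_{\sigma_B}$, not of $U_{\sigma_{B'}}$, that yields $U_{\sigma_B} \subseteq U_{\sigma_{B'}}$, but your conclusion is correct.
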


\begin{proof}
Define $\phi \colon \mcalf(\butm; \mcalc) \lra \mcalf(\utm; \mcalc)$ as the restriction to $\utm$. That is,  $\phi(G):= G|\utm$.  
To define a functor $\psi$ in the other way, let $F \colon \utm \lra \mcalc$ be an object of $\mcalf(\utm; \mcalc)$. For  $B \in \butm$ define $\psi(F)(B):= F(U_{\sigma_B})$, where $U_{\sigma_B}$ is provided by the axiom (C3) from Definition~\ref{gc_defn}. Again from the same axiom, one can easily define $\psi(F)$ on morphisms. If $\eta \colon F \lra F'$ is a morphism of $\mcalf(\utm; \mcalc)$, we define $\psi(\eta)(B):= \eta[U_{\sigma_B}]$. It is straightforward to check that $\phi \psi = id$ and $\psi \phi \cong id$. 

\end{proof}

We close this subsection with the statement of Theorem~\ref{crucial_thm}. 
%On the one hand, consider the category $\mcalf(\utm; \mcalc)$  of very good contravariant functors from $\utm$ to $\mcalc$. On the other hand, we have the following category. 

\begin{defn} \label{fpivect_defn}
Let $\Pi(M)$ be the fundamental groupoid of $M$. Define $\fpic$ as the category whose objects are contravariant functors $F \colon \Pi(M) \lra \mcalc$, and whose morphisms are  natural transformations. 
\end{defn}

%The following result says that these two categories are equivalent.  

%Let $\mcalf(\butm; \mcalc)$ be the category of very good contravariant functors. On the other let $\kbb$ be the ground field, and let $GL(F(U_{\lv}))$ denote the usual group of $\kbb$-linear isomorphisms from $\kbbn$ to $\kbbn$, 
%\[
%GL(F(U_{\lv})) = \{\varphi \colon \kbbn \lra \kbbn| \ \mbox{$\varphi$ is a $\kbb$-linear isomorphism}\},
%\]
%and let $\homt(\pio(M), GL(F(U_{\lv})))$ be the set of homomorphisms of groups between $\pio(M)$ and $GL(F(U_{\lv}))$.  In that set, define the conjugacy relation $\sim$ by  $f \sim g$ if and only if there exists $\varphi \in GL(F(U_{\lv}))$ such that $f(x) = \varphi^{-1} g(x) \varphi$ for all $x \in \pi_1(M)$, and consider the quotient $\homt(\pi_1(M), GL(F(U_{\lv})))\slash \sim$, which is just a set.  

\begin{thm} \label{crucial_thm}
Let $\mcalc$ be any category.  Then the categories $\mcalf\left(\utm; \mcalc\right)$ and $\fpic$ are equivalent. 
%That is, 
%\[
%\mcalf\left(\utm; \mcalc\right) \simeq \fpic.
%\]
\end{thm}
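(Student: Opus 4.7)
The plan is to construct a pair of functors
\[
\Psi \colon \mcalf(\utm; \mcalc) \lra \fpic, \qquad \Phi \colon \fpic \lra \mcalf(\utm; \mcalc),
\]
and to prove $\Phi \Psi \cong \id$ and $\Psi \Phi \cong \id$. Using the standard edge-path presentation of $\Pi(M)$, in which objects are vertices of $\tm$ and morphisms are homotopy classes of edge-paths $f = (v_0, \dots, v_r)$, I would define $\Psi(F)(v) := F(U_{\lv})$ on objects. To an edge-path $f$, associate the zigzag $\mcalu_f$ in $\utm$ consisting of the inclusions $U_{\vi} \hra U_{\vii} \hookleftarrow U_{\vit}$; since each $U_\sigma$ is diffeomorphic to an open ball and $\pi_0$ of each inclusion is a bijection, Proposition~\ref{isotopy_equiv_prop} shows every map in $\mcalu_f$ is an isotopy equivalence, so $F$ turns $\mcalu_f$ into a zigzag of isomorphisms whose ``composition along $\mcalu_f$'' is defined to be $\Psi(F)([f])$. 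On morphisms $\eta \colon F \lra F'$, set $\Psi(\eta)[v] := \eta[U_{\lv}]$. The main verification here is invariance under the two elementary moves generating edge-path homotopy: the move $(v, w, v) \sim (v)$ is immediate as $F(i)^{-1} \circ F(i) = \id$, while the move $(u, v, w) \sim (u, w)$ across a $2$-simplex $\langle u v w\rangle$ follows from the commutative diagram of inclusions in $\utm$ relating $U_{\langle u \rangle}, U_{\langle v \rangle}, U_{\langle w \rangle}$ and their pairwise and triple joins, which $F$ sends to a commutative diagram of isomorphisms implementing precisely that identity.

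For $\Phi$, given $G \in \fpic$ I would build $\Phi(G) \colon \utm \lra \mcalc$ by induction on the skeleta of $\tm$. On the $0$-skeleton, put $\Phi(G)(U_{\lv}) := G(v)$. For each simplex $\sigma$ of dimension $\geq 1$, choose an ordered basepoint vertex $v_0 \in \sigma$, set $\Phi(G)(U_\sigma) := G(v_0)$, and define the structure morphism of each inclusion $U_{\langle w\rangle} \hra U_\sigma$ (for $w$ a vertex of $\sigma$) by applying $G$ to the direct edge-path from $v_0$ to $w$ inside $\sigma$; the morphism attached to $U_\tau \hra U_\sigma$ for a lower-dimensional face $\tau$ is then forced by functoriality, with well-definedness reducing to the observation that any two edge-paths between fixed vertices that lie entirely inside $\sigma$ are homotopic in $M$ (since $\sigma$ is contractible). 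On morphisms $\alpha \colon G \lra G'$ in $\fpic$, set $\Phi(\alpha)[U_\sigma] := \alpha[v_0]$.

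The main obstacle will be verifying that $\Phi(G)$ is indeed a well-defined very good functor on all of $\utm$. The poset $\utm$ contains arbitrary inclusions $U_\sigma \hra U_\tau$ coming from face relations in $\tm$, and functoriality must be checked on every composable pair; additionally, the construction must be shown independent of the basepoint choices, up to canonical isomorphism. This is where the structural properties of a very good cover (Definition~\ref{gc_defn}) are essential: axiom (C0) ensures each $U_\sigma$ is contractible so $G$-images of different edge-paths within $\sigma$ coincide, while axiom (C1) lets one decompose general intersection-related inclusions combinatorially. Once $\Phi(G)$ is established, the verifications $\Psi\Phi \cong \id_{\fpic}$ and $\Phi\Psi \cong \id_{\mcalf(\utm; \mcalc)}$ are routine: on the first, $\Psi\Phi(G)$ agrees with $G$ on vertices by construction, and on an edge-path $f$ the zigzag composition defining $\Psi$ collapses by functoriality of $G$ to the single morphism $G([f])$; on the second, the natural isomorphism $\Phi\Psi(F) \cong F$ has component at $U_\sigma$ given by the isomorphism induced by $F$ on the inclusion $U_{\langle v_0\rangle} \hra U_\sigma$ (which $F$ inverts since it is very good), with naturality following from the same $2$-simplex commutativity used for $\Psi$.
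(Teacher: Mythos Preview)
Your proposal is correct and follows essentially the same route as the paper: define $\Psi$ via the edge-path model of $\Pi(M)$ and zigzag composition along $\mcalu_f$, check well-definedness via the $2$-simplex commutativity (the paper's Lemma~\ref{circlular_lem}), build $\Phi$ by induction on skeleta with a chosen base vertex in each simplex, and exhibit the natural isomorphism $\Phi\Psi(F)\cong F$ via $F$ applied to the inclusion of the base vertex's open set. The paper makes one specific choice you leave open---it always takes the \emph{last} vertex $v_k$ of $\langle v_0\cdots v_k\rangle$ as basepoint---which buys the exact equality $\Psi\Phi = \id$ rather than merely $\Psi\Phi\cong\id$; and it packages the inductive extension of $\Phi_G$ to higher skeleta into two standalone statements (Propositions~\ref{extension2_prop} and~\ref{extension_prop}) that make the well-definedness you flag as ``the main obstacle'' fully explicit, but the underlying argument is the one you describe.
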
  

To prove this theorem, we will construct two functors: 
$
\Psi \colon \mcalf\left(\utm; \mcalc\right) \lra \fpic
$
in Subsection~\ref{construction_psi_subsection},  and 
$
\Phi \colon \fpic \lra \mcalf\left(\utm; \mcalc\right)
$
in Subsection~\ref{construction_phi_subsection}.  Next, in Subsection~\ref{proof_main_thm_subsection}, we will show that $\Phi \Psi \cong id$ and $\Psi \Phi \cong id$.

\subsection{Construction of the functor $\Psi \colon \mcalf\left(\utm; \mcalc\right) \lra \fpic$}  \label{construction_psi_subsection}

To construct $\Psi$, we first need to replace the fundamental groupoid $\Pi(M)$ by another category easier to work with.

%We continue to use the same notations as in Subsection~\ref{vgc_subsection}.  As said the title, the goal of this subsection is to construct a functor 
%\[
%\Psi \colon \mcalf\left(\utm; \mcalc\right) \lra \fpic
%\] 
%from the category of very good contravariant functors to the category $\fpic$ introduced in Definition~\ref{fpivect_defn}. To do this we first need the notion of \textit{edge-path}, which will allow us to get a simpler definition of the fundamental groupoid $\Pi(M)$. Recall the classical definition of $\Pi(M)$. It is a category whose objects are points in $M$, morphisms being homotopy classes of paths. 

\begin{defn} \label{edge_loop_defn}
(i) An \emph{edge-path} is a sequence $(v_0, \cdots, v_r), r \geq 0,$ of vertices of $\tm$ such that two adjacent vertices, $v_i$ and $v_{i+1}$, are faces of the same $1$-simplex  in $\tm$. 
%When $r =0$,  $(v_0)$ is called the \emph{constant edge-path} at $v_0$.  
(ii) An \emph{edge-loop} is an edge-path starting and ending at the same vertex. 
%\item[(iii)] An edge-path $(v_0, v_1, \cdots, v_r, v_{r+1})$ is called \emph{reduced} if it does not contain any sub-sequence like $(v_i, v_{i+1}, v_i)$. An edge-loop  is \emph{reduced} if the underlying edge-path $(v, v_1, \cdots, v_r)$ is reduced.  
\end{defn}

 Let $P\tm$ denote the set of edge-paths in which we define  the equivalence relation $\sim$ generated by $(v_i, v_j, v_i, v_l) \sim (v_i, v_l)$, and $(v_i, v_j, v_k) \sim (v_i, v_k)$ if and only if $v_i, v_j,$ and  $v_k$ are the vertices of a $2$-simplex of $\tm$. 
%\[
%\mbox{Type1:} \quad  (v_0, \cdots, v_h, v_i, v_j, v_k, v_l, \cdots, v_r) \simeq (v_0, \cdots, v_h, v_i, v_l, \cdots, v_r)
%\]
%if and only if $v_k = v_i$,
%and
%\[
%\mbox{Type2:} \quad (v_0, \cdots, v_i, v_j, v_k, \cdots, v_r) \simeq (v_0, \cdots, v_i, v_k, \cdots, v_r) 
%\]
%if and only if $v_i, v_j, v_k$ determine a $2$-simplex in $\tm$. 
Let $P\tm \slash \sim$ be the category  whose objects are vertices of $\tm$, and whose morphisms are equivalence classes $[f]$ of edge-paths. By abuse of notation we will write $f$ for $[f]$. Compositions are induced by the  concatenation operation.  The following result is well known, and it is readily obtained from \cite[Proposition 1.26]{hatcher02}. 

\begin{thm} \label{pim_thm}
The fundamental groupoid of $M$ is isomorphic to the category $P\tm \slash \sim$. 
%That is,
%\[
%\Pi(M) \cong P\tm \slash \simeq. 
%\]
\end{thm}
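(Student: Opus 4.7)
The plan is to reduce the statement to Hatcher's Proposition 1.26 by constructing an explicit functor $\Phi \colon P\tm /\sim \,\lra \Pi(M)$ and verifying its well-definedness, fully faithfulness, and (essential) surjectivity. Without loss of generality one may assume $M$ is connected, since both sides of the claimed isomorphism split as disjoint unions of components, and the vertices of $\tm$ meet every component of $M$.

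First I would define $\Phi$ on objects as the inclusion $v \mapsto v$ of the vertex set of $\tm$ into $M$, and on morphisms by sending an edge-path $f = (v_0, \cdots, v_r)$ to the homotopy class of the concatenation $e_{v_0 v_1} \cdot e_{v_1 v_2} \cdots e_{v_{r-1} v_r}$, where $e_{v_i v_{i+1}}$ is the linear path along the $1$-simplex $\langle v_i v_{i+1}\rangle$. Under the usual identification of $\Pi(M)$ with the groupoid whose objects are just the vertices of $\tm$ (taking a single point from every component suffices, but every vertex lies in a component, so this groupoid is equivalent to the full $\Pi(M)$), this yields a candidate isomorphism at the level of object sets. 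Functoriality with respect to concatenation is immediate from the definition.

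Next I would verify that $\Phi$ respects the two relations generating $\sim$. For the first relation $(v_i, v_j, v_i, v_l) \sim (v_i, v_l)$, the corresponding path in $M$ traverses the edge $\langle v_i v_j\rangle$ and returns along it, and this backtracking is nullhomotopic via the straight-line homotopy on $\langle v_i v_j\rangle$, leaving $e_{v_i v_l}$. For the second relation $(v_i, v_j, v_k) \sim (v_i, v_k)$ when $\langle v_i v_j v_k\rangle$ is a $2$-simplex of $\tm$, the concatenation $e_{v_i v_j}\cdot e_{v_j v_k}$ is homotopic rel endpoints to $e_{v_i v_k}$ via the straight-line homotopy inside the convex triangle $\langle v_i v_j v_k\rangle$. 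Hence $\Phi$ descends to a well-defined functor on $P\tm/\sim$.

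The remaining task is to show $\Phi$ is fully faithful. Fullness: any continuous path in $M$ with endpoints at vertices is, by simplicial approximation applied after a sufficiently fine subdivision, homotopic rel endpoints to an edge-path. Faithfulness: a homotopy between two edge-paths can be triangulated and, via simplicial approximation, made cellular with respect to $\tm$, so it decomposes into a finite sequence of elementary moves, each of which either backtracks along an edge (handled by the first relation) or slides across a $2$-simplex (handled by the second relation). This is precisely the content of \cite[Proposition~1.26]{hatcher02}, stated there for the fundamental group based at a vertex but applicable unchanged to paths between vertices once a spanning tree of the $1$-skeleton is fixed. The main obstacle is this last simplicial approximation argument; since the reference does the work, I would invoke it directly rather than reprove it, noting only how to extend from based loops to morphisms between distinct vertices in $\Pi(M)$ via concatenation with edge-paths along the chosen maximal tree $m\tm$.
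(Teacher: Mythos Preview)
Your proposal is correct and aligns with the paper's approach: the paper does not give a proof at all but simply states that the result ``is readily obtained from \cite[Proposition~1.26]{hatcher02},'' which is exactly the reference you invoke and unpack. Your sketch of the functor $\Phi$, the verification that it respects the two generating relations, and the appeal to simplicial approximation for full faithfulness are all standard and sound; the only minor point worth noting is that the result is literally an equivalence of categories rather than an isomorphism (since $\Pi(M)$ has all points of $M$ as objects), which you correctly handle by restricting to the full subgroupoid on the vertices of $\tm$.
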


From now on, one should think $\Pi(M)$ as the category $P\tm \slash \sim$. Consider the poset $\utm$ from Subsection~\ref{vgc_subsection}, and let $f = (v_0, \cdots, v_r)$ be an edge-path. For $0 \leq i \leq r-1$, one has the inclusions $U_{\vi} \subseteq U_{\langle v_iv_{i+1}\rangle}$ and $U_{\vit} \subseteq  U_{\langle v_iv_{i+1}\rangle}$, which we denote $p_{i(i+1)} \colon U_{\vi} \hookrightarrow U_{\langle v_iv_{i+1}\rangle}$ and $q_{i(i+1)} \colon U_{\vit} \hookrightarrow U_{\langle v_iv_{i+1}\rangle}.$ 
%Now we need to introduce some notation before defining $\Psi$. First, recall  the subposet \[\utm = \{U_{\sigma}\}_{\sigma \in S(\tm)} \subseteq \om\] from (\ref{cover_utm}), where $S(\tm)$ is the set of simplices of $\tm$. An object $U_{\sigma}$ is a suitable thickening (diffeomorphic to an open ball) of the simplex $\sigma$ such that $U_{\sigma} \cap U_{\lambda}$ is either the emptyset or $U_{\sigma \cap \lambda}$. Given an edge-path $f = (v_0, \cdots, v_r)$, we let  $\langle v_iv_{i+1} \rangle$ denote the edge connecting $v_i$ to $v_{i+1}$. For every $0 \leq i \leq r-1$, one has two canonical inclusions 
%\begin{eqnarray} \label{piqi}
%p_{i(i+1)} \colon U_{\vi} \hookrightarrow U_{\langle v_iv_{i+1}\rangle} \quad \mbox{and} \quad q_{i(i+1)} \colon U_{\vit} \hookrightarrow U_{\langle v_iv_{i+1}\rangle} 
%\end{eqnarray} 

\begin{rmk} \label{orf_rmk} The edge-path $f = (v_0, \cdots, v_r)$ is canonically oriented from $v_0$ to $v_r$. (i) If $(v_i, v_j, v_i)$ is an edge-path, then the corresponding  edges $\vij$ and $\vji$ differ  by their orientations. But, by the definition of $U_{\sigma}$, one has $U_{\vij} = U_{\vji}$. (ii) Clearly, one has $p_{ij} = q_{ji}$ and $q_{ij} = p_{ji}$ since $U_{\vij} = U_{\vji}$. So $p_{ij} \neq p_{ji}$ and $q_{ij} \neq q_{ji}$ whenever $i \neq j$. 
\end{rmk}

The following  definition is that of $\Psi$ on objects. We will define it on morphisms at the end of this subsection. 

\begin{defn} \label{psi_objects_defn}
Define 
%$
%\Psi \colon \mcalf\left(\utm; \mcalc\right) \lra \fpic
%$ 
%as  
$
\Psi(F)(v) := F(U_{\lv}).
$
If $f = (v_0, \cdots, v_r)$ is an edge-path, define $\Psi(F)(f) \colon \Psi(F)(v_r) \lra \Psi(F)(v_0)$  as the composite
\begin{eqnarray} \label{psif_defn}
\Psi(F)(f) := F(p_{01})(F(q_{01}))^{-1} \cdots F(p_{i(i+1)})(F(q_{i(i+1)}))^{-1} \cdots F(p_{(r-1)r})(F(q_{(r-1)r})^{-1}.
\end{eqnarray}
%on objects as follows. Let $F \colon \utm \lra \mcalc$ be very good.
%\begin{enumerate}
%\item[$\bullet$] On objects, that is, on vertices of $\tm$,
%\begin{eqnarray} \label{psifv}
%\Psi(F)(v) := F(U_{\lv}).
%\end{eqnarray}
%\item[$\bullet$] On morphisms, if $f = (v_0, \cdots, v_r)$ is an edge-path, define $\Psi(F)(f) \colon \Psi(F)(v_0) \lla \Psi(F)(v_r)$ as 
%\begin{eqnarray} \label{psif_defn}
%\Psi(F)(f) := F(p_{01})(F(q_{01}))^{-1} \cdots F(p_{i(i+1)})(F(q_{i(i+1)}))^{-1} \cdots F(p_{(r-1)r})(F(q_{(r-1)r})^{-1}
%\end{eqnarray}
\end{defn}

For the sake of simplicity, we will write $\Psi_F$ for $\Psi(F)$. To check that $\Psi_F(f)$ is well defined, we  need the following lemma.

\begin{lem} \label{circlular_lem}
Let $v_0, v_1$, and $v_2$ be the vertices of a $2$-simplex $\langle v_0v_1v_2\rangle$, and let $(v_0, v_1, v_2, v_0)$ be an edge-loop. Then 
\[
F(p_{01})(F(q_{01}))^{-1} F(p_{12})(F(q_{12}))^{-1} F(p_{20})(F(q_{20}))^{-1} = id
\]
\end{lem}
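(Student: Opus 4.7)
The plan is to use the 2-simplex $\sigma = \vzot$ itself as a common ``anchor'' that all three edges factor through. Since $U_{\langle v_i\rangle}$, $U_{\langle v_iv_j\rangle}$, and $U_{\sigma}$ are all diffeomorphic to open balls and the chain of inclusions $U_{\langle v_i\rangle} \subseteq U_{\langle v_iv_j\rangle} \subseteq U_{\sigma}$ holds by construction of $\utm$, Proposition~\ref{isotopy_equiv_prop} tells us that every such inclusion is an isotopy equivalence. Because $F$ is very good, it sends all of them to isomorphisms. This is the only non-formal input, and it is immediate from the setup.

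Write $a_i \colon U_{\langle v_i\rangle} \hra U_{\sigma}$ and $a_{ij} \colon U_{\langle v_iv_j\rangle} \hra U_{\sigma}$ for the inclusions into $U_{\sigma}$. The triangles $a_i = a_{ij}\circ p_{ij}$ and $a_j = a_{ij}\circ q_{ij}$ commute, so after applying $F$ we get $F(a_i) = F(p_{ij})\circ F(a_{ij})$ and $F(a_j) = F(q_{ij})\circ F(a_{ij})$. Since $F(a_{ij})$, $F(p_{ij})$, $F(q_{ij})$ are all isomorphisms, we can solve these to obtain the key identity
\[
F(p_{ij})\circ F(q_{ij})^{-1} \;=\; F(a_i)\circ F(a_j)^{-1}\colon F(U_{\langle v_j\rangle})\lra F(U_{\langle v_i\rangle}).
\]

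Applying this identity to each of the three edges $(v_0v_1)$, $(v_1v_2)$, $(v_2v_0)$ of the 2-simplex and composing, the middle factors $F(a_1)^{-1}F(a_1)$ and $F(a_2)^{-1}F(a_2)$ telescope to the identity, leaving $F(a_0)\circ F(a_0)^{-1} = \id_{F(U_{\langle v_0\rangle})}$, which is exactly the claim.

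The main (very mild) obstacle is verifying that $F$ actually inverts every one of $p_{ij}$, $q_{ij}$, $a_i$, $a_{ij}$; once this is noted via Proposition~\ref{isotopy_equiv_prop}, the rest is a bookkeeping exercise with the commutative triangles over $U_{\sigma}$. No admissible-family machinery from Section~\ref{vghf_section} is required here, because the existence of the common enclosing ball $U_{\sigma}$ makes the ``composition along a zigzag'' trivially well-defined on the boundary of a 2-simplex.
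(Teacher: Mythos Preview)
Your proof is correct and is essentially the same argument as the paper's: both factor everything through the open set $U_{\vzot}$ of the $2$-simplex and use that all the relevant inclusions are isotopy equivalences, hence sent to isomorphisms by $F$. The paper packages this as a commutative diagram with $F(U_{\vzot})$ at the center, the edge-opens $F(U_{\vij})$ around it (via the face maps $d^i$), and the vertex-opens on the outside, then observes that each square commutes; your write-up extracts the identity $F(p_{ij})F(q_{ij})^{-1}=F(a_i)F(a_j)^{-1}$ from those same squares and telescopes, which is just the diagram chase made explicit.
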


\begin{proof}
Consider the inclusions 
$d^0 \colon U_{\vot} \hookrightarrow U_{\vzot},$   $d^1 \colon U_{\vtz} \hookrightarrow U_{\vzot},$ and $d^2 \colon U_{\vzo} \hookrightarrow U_{\vzot}.$
Also consider the diagram 
\[
\xymatrix{   &      &   F(U_{\vt})    &    &      \\
                &  F(U_{\vtz}) \ar[ru]^-{F(p_{20})} \ar[ldd]_-{F(q_{20}}  &   &   F(U_{\vot})  \ar[lu]_-{F(q_{12})} \ar[rdd]^-{F(p_{12})}  &    \\
						&       &  F(U_{\vzot}) \ar[lu]_-{F(d^1)} \ar[ru]^-{F(d^0)}  \ar[d]_-{F(d^2)}  &   &       \\	
                F(U_{\vz})   &    &    F(U_{\vzo})  \ar[rr]_-{F(q_{01})} \ar[ll]^-{F(p_{01})} &      &   F(U_{\vo}) }
\]
The desired result follows from the fact that  each square of that diagram commutes. 
%the underlying diagram without $F$ (remember that $F$ is contravariant) is a diagram of inclusions, which is obviously commutative,  it follows that $(\ref{triangle_diagram})$ is commutes. Equivalently,  every square of that diagram commutes, and this gives the following  equalities. 

%\begin{align*}
%F(p_{01})(F(q_{01}))^{-1} F(p_{12})(F(q_{12}))^{-1} F(p_{20})(F(q_{20}))^{-1}  &  =   &  \\
% F(p_{01})F(d^2) (F(d^0))^{-1} F(d^0) (F(d^1))^{-1}  (F(q_{20}))^{-1} &  =  &  \\
%F(p_{01})F(d^2) (F(d^1))^{-1}  (F(q_{20}))^{-1}   &    =   &    \\
%F(p_{01})  (F(p_{01}))^{-1} F(q_{20}) (F(q_{20}))^{-1}   &  =  &   \\
%id. & &                                                                                                                                 
%\end{align*}
%The first equality comes from $(F(q_{i_1}))^{-1} F(p_{i_2}) = F(l_3)(F(l_1))^{-1}$ and $(F(q_{i_2}))^{-1} F(p_{i_3}) = F(l_1)(F(l_2))^{-1}$. And the third equality follows from $F(l_3)(F(l_2))^{-1} = (F(p_{i_1}))^{-1} F(q_{i_3})$. 
\end{proof}

\begin{prop} \label{psif_well_define_prop}
$\Psi_F(f) = \Psi_F(g)$ when $f \sim g$. 
\end{prop}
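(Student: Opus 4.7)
The plan is to use the fact that the equivalence relation $\sim$ on edge-paths is \emph{generated} by the two elementary moves $(v_i,v_j,v_i,v_l)\sim(v_i,v_l)$ and $(v_i,v_j,v_k)\sim(v_i,v_k)$ (the latter when $v_i,v_j,v_k$ span a $2$-simplex). So, after observing that $\Psi_F$ is multiplicative under concatenation of edge-paths, it will suffice to verify that both moves preserve $\Psi_F$.

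First I would record multiplicativity: by inspecting the formula (\ref{psif_defn}) one sees immediately that if $f=(v_0,\dots,v_r)$ and $g=(v_r,\dots,v_s)$ then $\Psi_F(f*g)=\Psi_F(f)\circ\Psi_F(g)$. Consequently, if $f$ and $g$ are equivalent edge-paths obtained by performing a single generating move on a sub-segment, the multiplicativity reduces the check to that sub-segment alone. By an obvious induction on the length of a chain of elementary moves connecting $f$ to $g$, it is therefore enough to treat the two moves separately.

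For the first move, compute
\[
\Psi_F((v_i,v_j,v_i,v_l)) \;=\; F(p_{ij})(F(q_{ij}))^{-1}\, F(p_{ji})(F(q_{ji}))^{-1}\, F(p_{il})(F(q_{il}))^{-1}.
\]
By Remark~\ref{orf_rmk} we have $p_{ji}=q_{ij}$ and $q_{ji}=p_{ij}$, so the two middle factors become
\[
F(p_{ij})(F(q_{ij}))^{-1}\,F(q_{ij})(F(p_{ij}))^{-1} \;=\; \mathrm{id},
\]
and the expression collapses to $F(p_{il})(F(q_{il}))^{-1}=\Psi_F((v_i,v_l))$.

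For the second move, Lemma~\ref{circlular_lem} gives
\[
F(p_{01})(F(q_{01}))^{-1}\,F(p_{12})(F(q_{12}))^{-1}\,F(p_{20})(F(q_{20}))^{-1}=\mathrm{id}.
\]
Hence $\Psi_F((v_0,v_1,v_2))=\bigl(F(p_{20})(F(q_{20}))^{-1}\bigr)^{-1}=F(q_{20})(F(p_{20}))^{-1}$, which (again using $p_{20}=q_{02}$ and $q_{20}=p_{02}$ from Remark~\ref{orf_rmk}) equals $F(p_{02})(F(q_{02}))^{-1}=\Psi_F((v_0,v_2))$. The general case of this move, $(v_i,v_j,v_k)\sim(v_i,v_k)$, reduces to this one by relabeling. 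There is no real obstacle here: the only delicate point is the orientation bookkeeping in Remark~\ref{orf_rmk}, and once that is in place both verifications are short manipulations using Lemma~\ref{circlular_lem} and multiplicativity.
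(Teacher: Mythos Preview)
Your proof is correct and follows exactly the same approach as the paper: the paper's one-line proof simply cites the definition of $\Psi_F$, Remark~\ref{orf_rmk}, and Lemma~\ref{circlular_lem}, and you have spelled out precisely how these three ingredients handle the two generating moves. Your explicit use of multiplicativity under concatenation and the orientation identities $p_{ji}=q_{ij}$, $q_{ji}=p_{ij}$ is exactly what the paper intends by ``the definition of $\Psi_F$'' and ``Remark~\ref{orf_rmk}.''
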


\begin{proof}
This follows immediately from the definition of $\Psi_F,$ Remark~\ref{orf_rmk}, and Lemma~\ref{circlular_lem}. 
\end{proof}

Now we define $\Psi$ on morphisms.
\begin{defn}  
If $\eta \colon F \lra F'$ is a morphism of $\mcalf(\utm; \mcalc)$, define $\Psi_{\eta} \colon \Psi_F \lra \Psi_{F'}$ as 
$
\Psi_{\eta}[v]  := \eta[U_{\lv}]. 
$
\end{defn}
%Now let $\eta \colon F \lra F'$ be a natural transformation between two very good functors $F, F' \colon \utm \lra \mcalc$. For any object $v \in \Pi(M)$, define $\Psi(\eta)[v] := \eta[U_{\lv}]$. 
The map $\Psi_{\eta}$ is  a member of $\underset{\fpic}{\text{hom}}(\Psi_F, \Psi_{F'})$ because of the following. For  
%\begin{prop} \label{psi_morphisms_prop}
%The collection $\{\eta[U_{\lv}]\}_{v \in \Pi(M)}$ is a natural transformation from $\Psi_F$ to $\Psi_{F'}$. 
%\end{prop}
an edge-path $f = (v_0, v_1, \cdots, v_r)$, one can consider the following diagram
\[
\xymatrix{F(U_{\langle v_0\rangle})  \ar[d]_-{\eta[U_{\lv}]}  &  F(U_{\langle v_0v_1\rangle}) \ar[l]_-{F(p_{01})} \ar[r] \ar[d]^-{\eta[U_{\langle v_0v_1 \rangle}]} &    \cdots &  F(U_{\langle v_{r-1}v_r\rangle}) \ar[rr]^-{F(q_{(r-1)r})} \ar[d]_-{\eta[U_{\langle v_{r-1}v_r\rangle}]} \ar[l]  & & F(U_{\langle v_r\rangle}) \ar[d]^-{\eta[U_{\langle v_r\rangle}]}  \\
F'(U_{\langle v_0\rangle})  &  F'(U_{\langle v_0v_1\rangle}) \ar[l]^-{F'(p_{01})} \ar[r]  &  \cdots &  F'(U_{\langle v_{r-1}v_r\rangle}) \ar[rr]_-{F'(q_{(r-1)r})} \ar[l] &  & F'(U_{\langle v_r\rangle})   }
\]
in which the horizontal arrows are obtained by applying $F$ and $F'$ to $p_{i(i+1)}$ and 
$q_{i(i+1)}$. Since  each square of that diagram commutes by the naturality of $\eta$, it follows that $\Psi_{\eta}$ is a natural transformation. 

\subsection{Construction of the functor $\Phi \colon \fpic \ \lra \mcalf\left(\utm; \mcalc\right)$}  \label{construction_phi_subsection}

%In Subsection~\ref{construction_psi_subsection} we defined a functor 
%$
%\Psi \colon \mcalf\left(\utm; \mcalc\right) \lra \fpic.
%$ 
%Here the aim is to construct a functor in the other way, namely
%$
%\Phi \colon \fpic \ \lra \mcalf\left(\utm; \mcalc\right). 
%$
%In Subsection~\ref{proof_main_thm_subsection} we will show that $\Phi$ is the \lq\lq inverse\rq\rq{} for $\Psi$. 

We begin with some notation. 
%At the beginning of the section, before Proposition~\ref{vg_cover_prop}, we have fixed a triangulation $\tm$ of $M$. In this subsection we continue to use the same triangulation.  We let $\tkm$ denote the $k$-skeleton of $\tm$. 
For a subcomplex $\mathcal{K} \subseteq \tm$, we will write $\mcalu(\mcalk) \subseteq \utm$ for the full subcategory  whose objects are $U_{\sigma}$ with $\sigma$ running over the set of simplices of $\mcalk$. For a category $\mcala$ we will write $ob(\mcala)$ for the collection of objects, and $mor(\mcala)$ for the collection of morphisms of $\mcala$. Recall the notation $\mcalt^M_p$ from Section~\ref{notation_section}.

Let $G \colon \Pi(M) \lra \mcalc$ be a contravariant functor, that is, $G \in \fpic$. We wish to define a very good functor 
$\Phi_G \colon \utm \lra \mcalc,$
where $\phig$ stands for $\Phi(G)$. Our strategy is to first define $\phig$ on $\mcalu(\too)$, then on $\mcalu(\tom)$, and $\mcalu(\ttm)$, and lastly, by  using Proposition~\ref{extension_prop} below, we will extend the definition on the rest of the triangulation.

On the $0$-skeleton $\too$, define $\phig(U_{\lv}):=G(v)$.  On the $1$-skeleton $\tom$, let $\langle v_0v_1\rangle$ be an edge, and let 
$d^0 :=q_{01} \colon U_{\langle v_1\rangle} \hra U_{\langle v_0v_1\rangle}$ and  $d^1 := p_{01} \colon U_{\langle v_0\rangle} \hra U_{\langle v_0v_1\rangle}.$
%Then the sequence $(v_0, v_1)$ represents the canonical edge-path from $v_0$ to $v_1$.  
Define 
\[
\phig(U_{\langle v_0v_1\rangle}) := G(v_1), \quad \text{and} \quad
\phig(d^i) := \left\{ \begin{array}{ccc}
                      id & \text{if}  & i = 0 \\
											G((v_0, v_1)) & \text{if} & i =1.
                     \end{array} \right.
\]
Of course $G((v_0, v_1))$ is an isomorphism  since every morphism of $\Pi(M)$ is invertible. So $\phig$ thus defined is a very good  functor. By definition it satisfies the following two conditions: (a) $\phig(U_{\lv}) = G(v)$ for all $v \in \too$; (b) $\phig(d^1)(\phig(d^0))^{-1}  = G((v_0, v_1))$ for any edge $\langle v_0v_1\rangle$. 

\begin{prop}
Given any other very good  functor $F \colon \mcalu(\tom) \lra \mcalc$ satisfying (a) and (b), 
there exists a natural isomorphism $\beta \colon \phig \lra F$.
\end{prop}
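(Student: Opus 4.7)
The plan is to define $\beta$ componentwise, using condition (a) at vertices to force $\beta$ to be the identity there, and then using naturality at $d^0$ to solve uniquely for the component at each edge.

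First I would set $\beta[U_{\lv}] := \id_{G(v)}$ for every vertex $v$; this is forced, since (a) gives $\phig(U_{\lv}) = G(v) = F(U_{\lv})$. Next, for an edge $\langle v_0 v_1\rangle$, the naturality square for $d^0 \colon U_{\langle v_1\rangle} \hra U_{\langle v_0 v_1\rangle}$ demands $F(d^0) \circ \beta[U_{\langle v_0 v_1\rangle}] = \phig(d^0)$. Since $d^0$ is an isotopy equivalence and $F$ is very good, $F(d^0)$ is invertible, so there is no choice but to put
\begin{equation*}
\beta[U_{\langle v_0 v_1\rangle}] \; := \; (F(d^0))^{-1} \circ \phig(d^0).
\end{equation*}
Each such component is a composition of isomorphisms, hence an isomorphism; so once naturality is verified, $\beta$ will automatically be a natural isomorphism.

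The one nontrivial thing to check is the naturality square for $d^1$, i.e.\ that $F(d^1) \circ \beta[U_{\langle v_0 v_1\rangle}] = \phig(d^1)$. Substituting the definition of $\beta[U_{\langle v_0 v_1\rangle}]$, this is equivalent to
\begin{equation*}
F(d^1) \circ (F(d^0))^{-1} \; = \; \phig(d^1) \circ (\phig(d^0))^{-1},
\end{equation*}
and this is precisely condition (b) applied to $F$ and to $\phig$: both sides equal $G((v_0,v_1))$. Since the only nonidentity morphisms of $\mcalu(\tom)$ are the inclusions $d^0, d^1$ of vertices into edges (plus compositions with identities), checking these two squares exhausts naturality.

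I do not expect any serious obstacle here; the whole statement is essentially the observation that conditions (a) and (b) pin down $F$ up to unique natural isomorphism on the $1$-skeleton. The mildly delicate point is merely to confirm that writing $\beta[U_{\langle v_0 v_1\rangle}]$ as $(F(d^0))^{-1}\phig(d^0)$ versus $(F(d^1))^{-1}\phig(d^1)$ gives the same morphism, which is exactly what (b) guarantees. This rigidity is also what will later let us extend $\phig$ inductively over the higher skeleta in the spirit of Subsection~\ref{construction_phi_subsection}.
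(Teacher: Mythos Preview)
Your proof is correct and follows essentially the same approach as the paper: define $\beta$ to be the identity at vertices and $(F(d^0))^{-1}$ at edges (your formula $(F(d^0))^{-1}\circ\phig(d^0)$ reduces to this since $\phig(d^0)=\id$ by construction), then check naturality using condition (b). Your write-up is in fact more explicit than the paper's, which simply declares the naturality check ``straightforward.''
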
  

\begin{proof}
Define $\beta$ as 
\[
\beta[U_{\sigma}] = \left\{ \begin{array}{ccc}
                                           id &  \text{if}  & \sigma \in \too \\
																					(F(d^0))^{-1}  & \text{if} & \sigma = \langle v_0v_1\rangle.
                                           \end{array}  \right.
\]
It is straightforward to check the naturality of $\beta$. 
%One can easily check the naturality of $\beta$ as shown the following diagram.
%\[
%\xymatrix{G(v_0) \ar[d]_{id}  & &  G(v_1)  \ar[ll]_-{G((v_0, v_1))} \ar[rr]^-{id}  \ar[d]_-{(F(d^0))^{-1}}  & &  G(v_1) \ar[d]^-{id}  \\
%  G(v_0)     &  &   F(U_{\langle v_0v_1 \rangle}) \ar[ll]^-{F(d^1)}  \ar[rr]_-{F(d^0)} & &   G(v_1).}
%\] 
%The lefthand square commutes by the property (b), and the righthand square is obviously commutative. 
\end{proof}

Now we define $\phig$ on the $2$-skeleton $\ttm$. Let $\tau = \langle v_0v_1v_2\rangle$ be a $2$-simplex, and let $\partial \mcalu(\tau) \subseteq \mcalu(\tau)$ denote the full subposet whose $ob(\partial \mcalu(\tau)) = ob(\mcalu(\tau)) \backslash \{U_{\tau}\}$. Let $F \colon \partial \mcalu(\tau) \lra \mcalc$ be a very good functor, and let $\iota \colon \partial \mcalu(\tau) \hookrightarrow \mcalu(\tau)$ be the inclusion functor. A contravariant functor $\fb \colon \mcalu(\tau) \lra \mcalc$ is called an \textit{extension} of $F$ if (i) $\fb$ is very good, and (ii) $\fb \circ \iota = F$. 

\begin{prop} \label{extension2_prop}
Such an extension $\fb$ exists if and only if 
\begin{eqnarray} \label{extension_cond}
F(p_{01}) \left(F(q_{01})\right)^{-1}F(p_{12}) \left(F(q_{12})\right)^{-1} F(p_{20}) \left(F(q_{20})\right)^{-1} = id. 
\end{eqnarray}
Moreover $\fb$ is unique up to isomorphism. 
\end{prop}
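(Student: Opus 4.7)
The plan is to treat necessity and sufficiency separately, then handle uniqueness. Every inclusion in $\mcalu(\tau)$ is an inclusion between connected open subsets diffeomorphic to open balls, so by Proposition~\ref{isotopy_equiv_prop}(ii) each is an isotopy equivalence; consequently, any very good extension $\fb$ must send every inclusion involving $U_\tau$ to an isomorphism in $\mcalc$. For necessity, if $\fb$ exists then the commutative diagram used in the proof of Lemma~\ref{circlular_lem} applies verbatim, with $U_\tau$ playing the role of $U_{\vzot}$ and with face inclusions $d^0 \colon U_{\vot} \hra U_\tau$, $d^1 \colon U_{\vtz} \hra U_\tau$, $d^2 \colon U_{\vzo} \hra U_\tau$; the identical diagram chase applied to $\fb$ yields (\ref{extension_cond}), since $\fb$ restricts to $F$ on $\partial \mcalu(\tau)$.

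For sufficiency, suppose (\ref{extension_cond}) holds. Set $\fb := F$ on $\partial \mcalu(\tau)$ and $\fb(U_\tau) := F(U_{\langle v_0\rangle})$. The six morphisms in $\mcalu(\tau)\setminus \partial \mcalu(\tau)$ are the three vertex inclusions $\iota_i \colon U_{\langle v_i\rangle} \hra U_\tau$ and the three face inclusions $d^0, d^1, d^2$ just named. I will force $\fb(\iota_0) := \mbox{id}$. The two factorizations $\iota_0 = d^2 \circ p_{01} = d^1 \circ q_{20}$ then force
\[
\fb(d^2) := (F(p_{01}))^{-1}, \qquad \fb(d^1) := (F(q_{20}))^{-1}.
\]
Next, the factorization $\iota_1 = d^2 \circ q_{01}$ forces $\fb(\iota_1) := F(q_{01})(F(p_{01}))^{-1}$, and then $\iota_1 = d^0 \circ p_{12}$ forces $\fb(d^0) := (F(p_{12}))^{-1} F(q_{01})(F(p_{01}))^{-1}$. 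Finally, $\fb(\iota_2)$ is determined in two potentially different ways, via $\iota_2 = d^0 \circ q_{12}$ and via $\iota_2 = d^1 \circ p_{20}$; equating the two resulting expressions and rearranging yields exactly (\ref{extension_cond}), so the hypothesis guarantees a well-defined $\fb(\iota_2)$. Functoriality on all remaining compositions reduces to the factorizations above, and very-goodness holds because every newly introduced value of $\fb$ is a composite of isomorphisms.

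For uniqueness, given two extensions $\fb, \fb'$, define $\beta \colon \fb \lra \fb'$ by $\beta[U] := \mbox{id}$ for $U \in \partial \mcalu(\tau)$ and $\beta[U_\tau] := (\fb'(d^2))^{-1} \circ \fb(d^2)$, which is an isomorphism since $\fb, \fb'$ are very good. Naturality with respect to $d^2$ is tautological; naturality with respect to $d^0, d^1$ and the $\iota_i$ follows by rerunning the same factorizations used in the construction, together with the fact that both $\fb, \fb'$ satisfy (\ref{extension_cond}). The main obstacle, concentrated in sufficiency, is the consistency check for $\fb(\iota_2)$: this is precisely where the hypothesis is genuinely used, and verifying it amounts to matching the two derived expressions against the correct cyclic form of (\ref{extension_cond}), possibly after a change of cyclic starting vertex analogous to the symmetry already present in Lemma~\ref{circlular_lem}.
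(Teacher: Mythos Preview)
Your proof is correct and follows essentially the same approach as the paper: necessity via Lemma~\ref{circlular_lem}, sufficiency by an explicit construction anchored at one chosen value, and uniqueness via an explicit natural isomorphism that is the identity on $\partial\mcalu(\tau)$. The only cosmetic difference is the anchor: the paper sets $\fb(U_\tau):=F(U_{\langle v_1v_2\rangle})$ with $\fb(d^0)=id$, whereas you set $\fb(U_\tau):=F(U_{\langle v_0\rangle})$ with $\fb(\iota_0)=id$; both choices force the remaining values and leave exactly one consistency check, which in either case is equivalent to (\ref{extension_cond}).
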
 

\begin{proof}
If $\fb$ exists, then the equation (\ref{extension_cond}) holds by Lemma~\ref{circlular_lem}. Now assume that we have (\ref{extension_cond}), and define  $\fb := F$ on the poset $\partial \mcalu(\tau)$. Also define  $\fb(U_{\tau}) := F(U_{\langle v_1v_2\rangle}).$
On morphisms $d^i \colon U_{\langle v_0 \cdots \widehat{v}_i \cdots v_2\rangle} \hra U_{\langle v_0 \cdots v_2\rangle}$, define 
\[
\fb(d^0) := id, \ \ \  \fb(d^1) := \left( F(p_{20})\right)^{-1}F(q_{12}), \ \ \mbox{ and } \ \ \fb(d^2) := \left( F(q_{01})\right)^{-1}F(p_{12}). 
\]
If $a_1$ and $a_2$ are two composable morphisms of $\mcalu(\tau)$, define $\fb(a_2a_1) := \fb(a_1)\fb(a_2)$. To see that $\fb$ is well defined on compositions, we need to show that equations
$
\fb(p_{12})\fb(d^0) = \fb(q_{01}) \fb(d^2),  
$
$
\fb(q_{12})\fb(d^0) = \fb(p_{20}) \fb(d^1),
$
and
$
\fb(p_{01})\fb(d^2) = \fb(q_{20}) \fb(d^1)
$
hold. The first two follow directly from the definition of $\fb(d^0), \fb(d^1)$ and $\fb(d^2)$, while the latter follows from (\ref{extension_cond}).
%is equivalent to $F(p_{01})\fb(d^2) = F(q_{20}) \fb(d^1)$ since $p_{01}$ and $q_{20}$ are morphisms of $\partial \mcalu(\tau)$. We check this latter equation. 
%\begin{align*}
%F(p_{01})\fb(d^2)  &  = & F(p_{01})\left(F(q_{01})\right)^{-1}F(p_{12}) \mbox{  by the definition of $\fb(d^2)$} \\
 %                      & = & F(q_{20}) \left(F(p_{20})\right)^{-1}F(q_{12}) \mbox{ by (\ref{extension_cond})} \\
	%										& = & F(q_{20})\fb(d^1) \mbox{ by the definition of $\fb(d^1)$}. 
%\end{align*}
Thus $\fb$ is a well defined  functor, which is clearly very good and satisfies $\fb \circ \iota = F$. 

To prove the uniqueness, let $F' \colon \mcalu(\tau) \lra \mcalc$ be another extension of $F$. Define $\beta \colon \fb \lra F'$ as  
\[
\beta[U_{\sigma}] = \left\{ \begin{array}{cc}
                             \left(F'(d^0)\right)^{-1} & \mbox{ if $\sigma = \tau$} \\
														 id  & \mbox{ otherwise}.
                             \end{array} \right.
\]
By the definitions, it is straightforward to check that $\beta$ is a natural isomorphism. 
%By definition, every $\beta[U_{\sigma}]$ is an isomorphism. For the naturality of $\beta$, since $F' \circ \iota = F$ and $\fb \circ \iota = F$, we only need to check it on $d^0, d^1$, and $d^2$. 
%\begin{enumerate}
%\item[$\bullet$] On $d^0 \colon U_{\vot} \hookrightarrow U_{\tau}$ the resulting square,
%\[
%\xymatrix{F(U_{\vot}) \ar[rrr]^-{\beta[U_{\vot}] = id} &  & &  F(U_{\vot}) \\
 %         \fb(U_{\tau}) \ar[u]^-{id} \ar[rrr]_-{\beta[U_{\tau}] = (F'(d^0))^{-1}} & &  & F'(U_{\tau}), \ar[u]_-{F'(d^0)}}
%\]
%is clearly commutative. 
%\item[$\bullet$] On $d^1 \colon U_{\vzt} \hookrightarrow U_{\tau}$ the resulting square,
%\[
%\xymatrix{F(U_{\vzt}) \ar[rr]^-{id} &   &  F(U_{\vzt}) \\
 %         \fb(U_{\tau}) \ar[u]^-{\fb(d^1)} \ar[rr]_-{(F'(d^0))^{-1}} &  & F'(U_{\tau}), \ar[u]_-{F'(d^1)}}
%\]
%is commutative.  Indeed, in the category $\mcalu(\tau)$, one has the equality $d^0q_{12} = d^1p_{20}$. Applying $F'$ to  that equality we get $F(q_{12})F'(d^0) = F(p_{20})F'(d^1)$, which implies that $$F'(d^1) \left(F'(d^0)\right)^{-1} = \left(F(p_{20})\right)^{-1}F(q_{12}) = \fb(d^1).$$ The latter equality comes from the above definition of $\fb(d^1)$. 
%\item[$\bullet$] On $d^2 \colon U_{\vzo} \hookrightarrow U_{\tau}$ we have a similar proof as in the previous case by using \lq\lq$d^2q_{01} = d^0p_{12}$\rq\rq{} instead of \lq\lq$d^0q_{12} = d^1p_{20}$\rq\rq{}. 
%\end{enumerate}
%So $\fb$ is the unique extension  of $F$ up to isomorphism. 
\end{proof}

The functor $\phig \colon \mcalu(\tom) \lra \mcalc$ certainly satisfies (\ref{extension_cond}) because of the following reason. Since $\tau =\langle v_0v_1v_2 \rangle$ is a $2$-simplex, it follows that $(v_0, v_1, v_2, v_0) \sim (v_0)$. Therefore one has 
$
G((v_0, v_1)) G((v_1, v_2)) G((v_2, v_0)) = id.
$
Hence, thanks to Proposition~\ref{extension2_prop}, we can extend $\phig$ to $\mcalu(\mcalt^M_2)$ up to isomorphism, and the new functor is still denoted $\phig$.  

Now we define $\phig$ on the rest of the triangulation. Let $\delk, k \geq 0,$ denote the poset whose objects are nonempty subsets of $\{0, 1, \cdots, k\}$, and whose morphisms are inclusions. Consider the dual category $\delko$. Given an object $S$ of $\delk$ and $i \notin S$, the  inclusion $S \hookrightarrow S \cup \{i\}$ gives rise to a unique morphism $d_i \colon S \cup \{i\} \lra S$ in $\delko$. In fact $d_i$ consists of taking out $i$.   We will write $\{a_1, \cdots, \widehat{i}, \cdots, a_p\}$ for $\{a_1, \cdots, a_p\} \backslash \{i\}$.  The main observation here is the fact that every morphism of $\delko$ can be written as a composition of $d_i$'s. 
%given two objects $S$ and $S'$ of $\delko$, there is only one morphism between them, which can be written in different many ways.  For instance, if $S'$ is on the form $S' = S\backslash \{i_1, \cdots, i_p\}$ then the composition $d_{i_{\sigma(1)}} \circ \cdots \circ d_{i_{\sigma(p)}} \colon S \lra S'$ does not depend on the permutation $\sigma \in \Sigma_p$ we choose. This composition is the morphism from $S$ to $S'$. 
Let $\ddelko \subseteq \delko$ be the full subposet of all objects except $\{0, 1 \cdots, k\}$, and let $\iota \colon \ddelko \hookrightarrow \delko$ be the inclusion functor. The following result gives us a way to extend $\phig$ to $\mcalu(\tkm)$ when $k \geq 3$. 
%is a generalization of Proposition~\ref{extension2_prop} to high dimensional simplices. %We gives us a way to extend $\phig$ to $\mcalu(\tkm)$ when $k \geq 3$.  

\begin{prop} \label{extension_prop}
Assume $k \geq 3$, and let $\phi \colon \ddelko \lra \mcalc$ be a covariant functor that sends every  morphism to an isomorphism. Then there exists a unique functor (up to isomorphism) $\phib \colon \delko \lra \mcalc$ such that (i) the image of any morphism under $\phib$ is an isomorphism, and (ii) $\phib \circ \iota = \phi$. 
%the obvious triangle 
%\[
%\xymatrix{\ddelko \ar[rr]^-{\phi} \ar[d]_{\iota}  &  &  \mcalc \\
%          \delko  \ar[rru]_-{\phib} &    & }
%\]
%commutes.    
\end{prop}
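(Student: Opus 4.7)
\textbf{Proof proposal for Proposition~\ref{extension_prop}.}

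The plan is to define $\phib$ by specifying it on the top object and on the face maps emanating from it, then verify the resulting data satisfies the simplicial commutation relations. Set $F_S := \{0,\ldots,k\} \setminus S$ for any $S \subseteq \{0,\ldots,k\}$; so $F_i$, $F_{ij}$, $F_{0ij}$ are the codimension-$1$, $2$, $3$ faces. Every morphism of $\delko$ factors as a composition of face maps $d_i$, subject only to the commutation relations $d_j \circ d_i = d_i \circ d_j$ encoded by the squares $(T,T\setminus\{i\},T\setminus\{j\},T\setminus\{i,j\})$. So to define $\phib$ it suffices to (a) extend $\phib$ to the top object, (b) define $\phib(d_i)$ for each face map $d_i \colon \top \to F_i$, and (c) verify commutation of every square involving $\top$.

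First I would set $\phib(\top) := \phi(F_0)$, $\phib(d_0) := \id$, and for $i \geq 1$,
\[
\phib(d_i) := \phi\bigl(d_0\colon F_i \to F_{0i}\bigr)^{-1}\, \phi\bigl(d_i\colon F_0 \to F_{0i}\bigr).
\]
This is well defined since $\phi$ sends every morphism of $\ddelko$ to an isomorphism. Note that both factors live inside $\ddelko$. The commutation of the square with vertices $\top, F_0, F_j, F_{0j}$ (i.e.\ the pair $\{0,j\}$) is immediate from this definition and $\phib(d_0)=\id$.

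The heart of the argument is verifying commutation of the square for a pair $1 \leq i < j$; this is the only step that uses $k\geq 3$. Consider the codimension-$3$ face $F_{0ij}$, which is a legitimate object of $\ddelko$ precisely when $k\geq 3$. Applying $\phi$ to the three commuting squares
\[
(F_0,F_{0i},F_{0j},F_{0ij}),\quad (F_i,F_{0i},F_{ij},F_{0ij}),\quad (F_j,F_{0j},F_{ij},F_{0ij})
\]
of $\ddelko$, one rewrites $\phi(d_0\colon F_{ij}\to F_{0ij})\circ\phi(d_j\colon F_i\to F_{ij})\circ\phib(d_i)$ by substituting the formula for $\phib(d_i)$ and pushing $\phi(d_0)$ through the first square, the $\phi(d_0)^{-1}\phi(d_0)$ cancels, and two further applications of the simplicial identities identify it with $\phi(d_0)\circ\phi(d_i\colon F_j\to F_{ij})\circ\phib(d_j)$. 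Since $\phi(d_0\colon F_{ij}\to F_{0ij})$ is an isomorphism, this yields $\phi(d_j)\circ\phib(d_i)=\phi(d_i)\circ\phib(d_j)$, the desired commutativity. The main obstacle is precisely this bookkeeping of several layered simplicial identities, and it is the unique place where the hypothesis $k\geq 3$ is indispensable, since otherwise $F_{0ij}$ would be empty and we would lack a pivot face on which to cancel.

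For uniqueness up to isomorphism, given a second extension $\phib'$ of $\phi$, define $\alpha\colon \phib\to\phib'$ by $\alpha[S]:=\id$ for every $S\in\ddelko$ and $\alpha[\top]:=\phib'(d_0)$. Naturality on $d_0$ is automatic, and naturality on $d_i$ for $i\geq 1$ follows by applying $\phib'$ to the commuting square $(\top,F_0,F_i,F_{0i})$ of $\delko$: this produces exactly the identity $\phib'(d_i)=\phib(d_i)\circ\phib'(d_0)$. Since $\phib'(d_0)$ is an isomorphism by hypothesis (i), $\alpha$ is a natural isomorphism, completing the proof.
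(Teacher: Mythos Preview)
Your proof is correct and takes essentially the same approach as the paper's: both set $\phib(\top)=\phi(F_0)$, $\phib(d_0)=\id$, and $\phib(d_i)=\phi(d_0)^{-1}\phi(d_i)$ for $i\geq 1$, then verify the simplicial commutation relations and prove uniqueness via the obvious natural isomorphism. Your argument is in fact more explicit than the paper's about where $k\geq 3$ is used (the pivot face $F_{0ij}$); the paper simply asserts that the $i,j\geq 1$ case ``follows from the case $i=0$ and the equations $\phi(d_i)\phi(d_j)=\phi(d_j)\phi(d_i)$'' without naming the codimension-$3$ face. One small slip: your $\alpha[\top]=\phib'(d_0)$ has source $\phib'(\top)$ and target $\phib(\top)=\phi(F_0)$, so the naturality identity $\phib'(d_i)=\phib(d_i)\circ\phib'(d_0)$ you derive is exactly naturality for $\alpha\colon\phib'\to\phib$, not $\phib\to\phib'$; either reverse the direction of $\alpha$ or take $\alpha[\top]=\phib'(d_0)^{-1}$ (which is what the paper does).
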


\begin{proof}
We begin by showing that $\phib$ exists. On the poset $\ddelko$, define  $\phib := \phi$. Also define $\phib(\{0, \cdots, k\}) := \phi(\{1, \cdots, k\})$. On morphisms, we first define  $\phib(d_0) := id$, where $d_0 \colon \{0,  \cdots, k\} \lra \{1, \cdots, k\}$. Next define $\phib(d_i), 1 \leq i \leq k$, in such a way that the following square commutes. 
%On objects of $\delko$, we only need to define $\phib(\{0, \cdots, k\})$ since  by definition, the set of objects of $\ddelko$ is obtained from the set of objects of $\delko$ by removing only $\{0, \cdots, k\}$.  Set 
%\[
%\phib(\{0, \cdots, k\}) = \phi(\{1, \cdots, k\}).
%\]
%On morphisms,  define first $\phib(d_0)$ by  $\phib(d_0) = id$ where $d_0 \colon \{0, 1,  \cdots, k\} \lra \{1, \cdots, k\}$.
%Next define $\phib(d_i), 1 \leq i \leq k$, as follows. Consider the following commutative diagram in the category $\delko$. 
\[
\xymatrix{\phib(\{0, \cdots, k\}) \ar[rr]^-{\phib(d_i)} \ar[d]_-{\phib(d_0) = id}  &  &  \phib(\{0, \cdots, \widehat{i}, \cdots, k\}) \ar[d]^-{\phib(d_0) = \phi(d_0)} \\
           \phib(\{1, \cdots, k\}) \ar[rr]_-{\phib(d_i) = \phi(d_i)} &  &  \phib(\{1, \cdots, \widehat{i}, \cdots, k-1\}).}
\] 
Namely, define
 %Notice that the top  and the lefthand vertical morphisms belong to  $\delko$, while the bottom  and the righthand vertical  morphisms lie in $\ddelko$. Define 
$
\phib (d_i) := \left(\phi(d_0)\right)^{-1} \phi (d_i) \phib (d_0). 
$
On the compositions, we define $\phib$ in the obvious way. Since there could be many different ways to go from one object of $\delko$ to another one, one needs to check that $\phib$ is well defined on compositions. To do that, it is enough to show that the  equations  
$\phi(d_i)\phib(d_j)  = \phi(d_j)\phib(d_i)$, $0 \leq i < j \leq k,$ hold. The case $i=0$ follows immediately  from the definition of $\phib(d^i)$, while the other cases follow from the case $i=0$ and the equations 
$\phi(d_i)\phi(d_j) = \phi(d_j)\phi(d_i), i \neq j$.
%we have \[\phi(d_0)\phib(d_j) = \phi(d_0) \left(\phi(d_0)\right)^{-1} \phi(d_j)\phib(d_0) = \phi(d_j)\phib(d_0).\] In the first equality, we have replaced $\phib(d_j)$ by its definition (\ref{phib_di_fornula}). So (\ref{phiphib_equations}) holds when $i =0$. That is, 
%\begin{eqnarray} \label{stepo_eqn}
%\phi(d_0)\phib(d_j)  = \phi(d_j)\phib(d_0), 1 \leq j \leq k. 
%\end{eqnarray}
%\item[$\bullet$] Using the fact that $\phi$ is a functor by assumption, for any $d_i, d_j, i \neq j,$ in $\ddelko$,   we have 
%\begin{eqnarray} \label{phi_equations}
%\phi(d_i)\phi(d_j) = \phi(d_j)\phi(d_i).
%\end{eqnarray}
% For any $0 \leq i < j \leq k$,
%\begin{align*}
%\phi(d_0)\phi(d_j)\phib(d_i) & = & \phi(d_j)\phi(d_0)\phib(d_i) \ \ \mbox{ by (\ref{phi_equations})} \\
 %                            & = & \phi(d_j)\phi(d_i)\phib(d_0) \ \ \mbox{ by (\ref{stepo_eqn})} \\
	%													&  = & \phi(d_i)\phi(d_j) \phib(d_0)  \ \ \mbox{ by (\ref{phi_equations})}  \\
		%												& = & \phi(d_i) \phi(d_0)\phib(d_j) \ \ \mbox{ by (\ref{stepo_eqn})} \\
		%												& = & \phi(d_0) \phi(d_i) \phib(d_j)  \ \ \mbox{ by (\ref{phi_equations})}.
%\end{align*}
Thus $\phib$ is a functor which by definition satisfies conditions (i) and (ii) of the proposition. 

To prove the uniqueness part, let $F \colon \delko \lra \mcalc$ be another functor satisfying (i) and (ii).  By the definitions, it is straightforward to show that the map $\beta \colon \phib \lra F$ defined as  
\[
\beta[S] := \left\{ \begin{array}{ccc}
									 (F(d_0))^{-1} & \mbox{if} & S = \{0, \cdots, k\} \\
									 id & \mbox{if} & S \in \ddelko 
                  \end{array} \right.
\]
is a natural isomorphism. This ends the proof. 
\end{proof}

Now we define by induction $\Phi(\eta):= \Phi_{\eta}$ where  $\eta \colon G \lra G'$ is a morphism of $\fpic$. 
%To define $\Phi$ on morphisms, let $\eta \colon G \lra G'$ be a morphism of $\fpic$. We  define $\Phi(\eta):=\phie \colon \phig \lra \phigp$ by induction on the skeletons.  
On the $1$-skeleton, define 
\[
\phie[U_{\sigma}] := \left\{ \begin{array}{ccc}
                                       \eta[v]  & \text{if} & \sigma = v \in \too  \\
                                       \eta[v_1] & \text{if}  & \sigma = \vzo.                                     
                                        \end{array}  \right.
\]
 Assume that  we have defined $\phie$ on $\mcalu(\mcalt^M_{k-1}),$ for some $k \geq 2$. For a $k$-simplex $\vzk$, we define
$
\phie[U_{\vzk}] := \phie[U_{\vok}]. 
$
%\begin{eqnarray}  \label{phie_ob2}
%\phie[U_{\vzk}] := \phie[U_{\vok}]. 
%\end{eqnarray} 
By induction on the skeletons, one can easily verify that the collection 
 $
 \left\{ \phie[U_{\sigma}] \right\}_{U_{\sigma} \in \utm}
 $
 is a natural transformation from $\phig$ to $\phigp$.

\subsection{Proof of the main result of the paper}   \label{proof_main_thm_subsection}

The goal of this subsection is to prove the main result of the paper: Theorem~\ref{main_thm_paper}. Before we do this, we will first prove Theorem~\ref{crucial_thm} announced earlier at the end of Subsection~\ref{vgc_subsection}. We will need  two lemmas.

%\textbf{Summary of Subsections~\ref{vgc_subsection}, \ref{construction_psi_subsection}, \ref{construction_phi_subsection}.}

%\begin{lem} \label{psiphi_id_lem}
% $\Psi \Phi = id$. 
%\end{lem} 

%\begin{proof}
%This follows immediately from the definitions.  
%\end{proof}

%Now we show that the composition $\Phi\Psi$ is naturally isomorphic to the identity through the following two lemmas. 

\begin{lem} \label{phipsi_lem}
For any very good functor $F \colon \utm \lra \mcalc$, there is a natural isomorphism 
$
\beta[F] \colon \phisf \stackrel{\cong}{\lra} F.
$
\end{lem}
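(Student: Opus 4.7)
My plan is to construct $\beta[F]$ by induction on the skeletons of $\tm$, exploiting the uniqueness (up to natural isomorphism) built into the three-stage construction of $\Phi$ in Subsection~\ref{construction_phi_subsection}. At each stage I will verify that $F$ itself (suitably restricted) satisfies the characterizing conditions used to build $\phisf$, and then invoke the relevant universal property to force the desired isomorphism.

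First, on the $0$-skeleton I take $\beta[F][U_{\lv}] := \id$, since $\phisf(U_{\lv}) = \Psi_F(v) = F(U_{\lv})$ holds by construction. On $\mcalu(\tom)$, the restriction $F|\mcalu(\tom)$ is very good and satisfies conditions (a) and (b) of the proposition immediately following the $1$-skeleton definition of $\Phi$: condition (a) reduces to the $0$-skeleton case, and (b) is the tautology $F(p_{01})(F(q_{01}))^{-1} = \Psi_F((v_0,v_1))$, which is exactly Definition~\ref{psi_objects_defn}. That proposition then supplies a natural isomorphism on $\mcalu(\tom)$ with $\beta[F][U_{\vzo}] := (F(q_{01}))^{-1}$.

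For a $2$-simplex $\tau$, both $\phisf|\mcalu(\tau)$ and $F|\mcalu(\tau)$ are very good extensions of their restrictions to $\partial\mcalu(\tau)$, and the compatibility equation~(\ref{extension_cond}) required of $F$ by Proposition~\ref{extension2_prop} is precisely the content of Lemma~\ref{circlular_lem}. I will transport $F|\mcalu(\tau)$ along the already-constructed $\beta[F]|\partial\mcalu(\tau)$ to obtain a second extension of $\phisf|\partial\mcalu(\tau)$, use the uniqueness part of Proposition~\ref{extension2_prop} to identify it with $\phisf|\mcalu(\tau)$ via an isomorphism that is the identity on the boundary, and then compose with the boundary isomorphism to produce $\beta[F]$ on $\mcalu(\tau)$. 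The same strategy, with Proposition~\ref{extension_prop} replacing Proposition~\ref{extension2_prop}, handles $k$-simplices for $k \geq 3$; the required input that every morphism of $\partial\mcalu(\sigma)$ be sent to an isomorphism is automatic because $F$ is very good.

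The hardest part will be that Propositions~\ref{extension2_prop} and~\ref{extension_prop}, as stated, give uniqueness of extensions of a \emph{fixed} boundary functor, whereas I need the slightly stronger fact that any isomorphism between two boundary functors lifts to an isomorphism of their extensions. I plan to deduce this from the explicit formulas for the comparison isomorphism given in the proofs of those propositions, where $\beta$ is declared to be the identity off the top simplex and an explicit inverse on it. A direct alternative, if this feels too indirect, is to set $\beta[F][U_\sigma] := (F(j_\sigma))^{-1}$ with $j_\sigma \colon U_{v_k} \hra U_\sigma$ the canonical inclusion of the last vertex of $\sigma$, and to check naturality by observing that on each face map the two sides of the naturality square reduce, after using the explicit values of $\phisf$ on faces computed in the proofs of Propositions~\ref{extension2_prop} and~\ref{extension_prop}, to $F$ applied to two chains of inclusions representing the same inclusion of subsets into $U_\sigma$.
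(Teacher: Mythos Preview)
Your proposal is correct, and in fact your ``direct alternative'' at the end is exactly what the paper does: it sets $\beta[F][U_{\lv}]=\id$, $\beta[F][U_{\vzo}]=(F(d^0))^{-1}$, and then recursively $\beta[F][U_{\vzk}]:=(F(d^0))^{-1}\beta[F][U_{\vok}]$, which unwinds precisely to $(F(j_\sigma))^{-1}$ for $j_\sigma\colon U_{\vk}\hra U_\sigma$ the inclusion of the last vertex. The naturality check is then left as a direct verification from the explicit formulas for $\phisf$ on face maps.

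Your primary route --- invoking the uniqueness clauses of Propositions~\ref{extension2_prop} and~\ref{extension_prop} and transporting along the boundary isomorphism --- is sound and more conceptual, but as you correctly note it needs the slight strengthening that an isomorphism on the boundary lifts to one on the extension. That strengthening is indeed immediate from the explicit $\beta$ written in those proofs (identity off the top cell, a single inverse on it), so there is no gap; it simply costs you an extra conjugation step that the paper's direct formula avoids. Either way you land on the same component maps.
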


\begin{proof}
By induction. On the $1$-skeleton, define 
\[
\beta[F][U_{\sigma}] := \left\{ \begin{array}{ccc}
                             id  & \text{if}  & \sigma = v \in \too \\
														 (F(d^0))^{-1} & \text{if}  & \sigma = \vzo.
                               \end{array} \right.
\]
Assuming that $\beta[F]$ is defined on $\mcalu(\tm_{k-1})$ for some $k \geq 2$,  we define  
$
\beta[F][U_{\vzk}] := (F(d^0))^{-1} \beta[F][U_{\vok}], 
$
where $d^0 \colon U_{\vok} \hookrightarrow U_{\vzk}$. By the definitions, one can easily check that $\beta[F]$ is a natural isomorphism.            
\end{proof}
 
Again by induction, one can prove the following lemma just by using the definitions. 

\begin{lem} \label{phipsi_natiso_lem}
The collection $\beta = \left\{\beta[F]\right\}_{F \in \mcalf(\utm; \mcalc)}$ defines a natural isomorphism from $\Phi\Psi$ to $id$. 
\end{lem}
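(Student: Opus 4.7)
The plan is to verify that the family $\beta = \{\beta[F]\}_F$ is natural in $F$, since Lemma~\ref{phipsi_lem} already shows each component $\beta[F] \colon \Phi\Psi(F) \to F$ is a natural isomorphism \emph{of functors on $\utm$}. Concretely, given any morphism $\eta \colon F \to F'$ in $\mcalf(\utm;\mcalc)$ and any $U_\sigma \in \utm$, I need to establish the identity
\begin{equation*}
\eta[U_\sigma] \circ \beta[F][U_\sigma] \;=\; \beta[F'][U_\sigma] \circ \phise[U_\sigma],
\end{equation*}
where $\phise = \Phi(\Psi(\eta))$. Since both $\beta[F]$ and $\phise$ were constructed inductively on the skeleta of $\tm$, it is natural to verify this identity by the same induction.

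For the base case $\sigma = v \in \too$, both $\beta[F][U_v]$ and $\beta[F'][U_v]$ are the identity, while by the definitions of $\Psi$ on objects and $\Phi$ on $0$-simplices we have $\phise[U_v] = \Psi_\eta[v] = \eta[U_v]$, so the square commutes trivially. For a $1$-simplex $\sigma = \vzo$, we have $\beta[F][U_{\vzo}] = (F(d^0))^{-1}$ and $\phise[U_{\vzo}] = \eta[U_{\vo}]$ by the definition of $\Phi$ on morphisms in degree~$1$. The required identity then reduces to
\begin{equation*}
\eta[U_{\vzo}] \circ (F(d^0))^{-1} \;=\; (F'(d^0))^{-1} \circ \eta[U_{\vo}],
\end{equation*}
which is precisely the naturality of $\eta$ applied to the inclusion $d^0 \colon U_{\vo} \hookrightarrow U_{\vzo}$, rearranged using invertibility of $F(d^0)$ and $F'(d^0)$ (isomorphisms because $F,F'$ are very good and $d^0$ is an isotopy equivalence).

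For the inductive step, assume the naturality square commutes on $\mcalu(\tm_{k-1})$, and let $\sigma = \vzk$ be a $k$-simplex with $k \geq 2$. Using the recursive formulas
\begin{equation*}
\beta[F][U_{\vzk}] = (F(d^0))^{-1} \, \beta[F][U_{\vok}], \qquad \phise[U_{\vzk}] = \phise[U_{\vok}],
\end{equation*}
and the same formulas with $F$ replaced by $F'$, the naturality square for $U_{\vzk}$ decomposes into two pieces: first one applies naturality of $\eta$ at the inclusion $d^0 \colon U_{\vok} \hookrightarrow U_{\vzk}$ to rewrite $\eta[U_{\vzk}] \circ (F(d^0))^{-1}$ as $(F'(d^0))^{-1} \circ \eta[U_{\vok}]$, and then one invokes the inductive hypothesis at $U_{\vok}$ to replace $\eta[U_{\vok}] \circ \beta[F][U_{\vok}]$ with $\beta[F'][U_{\vok}] \circ \phise[U_{\vok}]$. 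Stringing these two substitutions together yields exactly the right-hand side $\beta[F'][U_{\vzk}] \circ \phise[U_{\vzk}]$.

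The only real obstacle is purely bookkeeping: one must choose the \emph{same} face $d^0 \colon U_{\vok} \hookrightarrow U_{\vzk}$ in both recursive definitions so that the inductive step lines up, which is indeed the convention fixed in Lemma~\ref{phipsi_lem} and in Subsection~\ref{construction_phi_subsection}. Once that is noted, the argument is a formal iteration of the naturality of $\eta$ and the inductive hypothesis, with no further geometric input. Since each $\beta[F]$ is already an isomorphism by Lemma~\ref{phipsi_lem}, the naturality established above promotes $\beta$ to a natural \emph{isomorphism} $\Phi\Psi \cong id$, as required.
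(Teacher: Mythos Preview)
Your argument is correct and follows exactly the approach the paper indicates: the paper merely says the lemma is proved ``by induction, just by using the definitions,'' and you have carefully carried out that induction on the skeleta of $\tm$, using at each stage only the recursive formulas for $\beta[F]$ and $\phise$ together with the naturality of $\eta$ at the face map $d^0$.
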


We can now prove Theorem~\ref{crucial_thm}, and  Theorem~\ref{main_thm_paper}. 

\begin{proof}[Proof of Theorem~\ref{crucial_thm}]
From  the definitions, one has $\Psi\Phi = id$, and from  Lemma~\ref{phipsi_natiso_lem}, we have $\Phi\Psi \cong id$.  
\end{proof}

\sloppy

\begin{proof}[Proof of Theorem~\ref{main_thm_paper}]
First of all, by Corollary~\ref{equiv_cat_coro} the category $\mcalf_k(\om; \mcalc)$ of very good homogeneous functors $\om \lra \mcalc$ of degree $k$ is equivalent to the category $\mcalf_1(\mcalo(F_k(M)); \mcalc)$ of very good linear functors $\mcalo(F_k(M)) \lra \mcalc$. %That is, $\mcalf_k\left(\om; \mcalc\right) \simeq \mcalf_1\left(\mcalo(F_k(M)); \mcalc\right).$
Let $\mcalt^{F_k(M)}$ denote a triangulation of $F_k(M)$. Consider the associated cover $\mcalu(\mcalt^{F_k(M)})$, which is very good by Proposition~\ref{vg_cover_prop}. Also consider the basis $\mcalb_{\mcalu(\mcalt^{F_k(M)})}$
%\[
%\mcalb_{\mcalu(\mcalt^{F_k(M)})} = \left\{B \subseteq F_k(M) \text{ diffeomorphic to an open ball| \ $B \subseteq U_{\sigma}$ for some $U_{\sigma} \in \mcalu(\mcalt^{F_k(M)})$}\right\},
%\]
for the topology of $F_k(M)$ (see Proposition~\ref{basis_prop}). By Theorem~\ref{equiv_cat_thm}, one has the equivalence  
$
\mcalf_1\left(\mcalo(F_k(M)); \mcalc\right) \simeq \mcalf\left(\bou(F_k(M)); \mcalc\right).
$
Since $\bou(F_k(M)) = \mcalb_{\mcalu(\mcalt^{F_k(M)})}$ by Remark~\ref{bofm_butm}, this latter equivalence becomes 
$
\mcalf_1\left(\mcalo(F_k(M)); \mcalc\right) \simeq \mcalf\left(\mcalb_{\mcalu(\mcalt^{F_k(M)})}; \mcalc\right).
$
Applying now Proposition~\ref{equiv_cat_vectn_prop}, we get 
$
\mcalf\left(\mcalb_{\mcalu(\mcalt^{F_k(M)})}; \mcalc\right) \simeq \mcalf\left(\mcalu(\mcalt^{F_k(M)}); \mcalc\right).
$
Lastly,  we have the equivalence 
%\begin{eqnarray} \label{main_thm_eqn5}
$
\mcalf\left(\mcalu(\mcalt^{F_k(M)}); \mcalc\right) \simeq \fpifc
$
by Theorem~\ref{crucial_thm}. Combining all these equivalences, we get the desired result. 
\end{proof}

\section{Connection to representation theory} \label{vghf_rep_section}

The goal of this short  section is to prove Corollary~\ref{main_coro_paper}, which establishes a connection between very good homogeneous functors and representation theory. Specifically, it says that the category of very good homogeneous functors of degree $k$ is equivalent to that of representations of $\pio(F_k(M))$ provided that $F_k(M)$ is connected.

To prove Corollary~\ref{main_coro_paper}, we will  need Proposition~\ref{fpim_rep_prop} below in which it is important to view $\Pi(M)$ as in Section~\ref{vgf_section} (see Theorem~\ref{pim_thm}). It is also important to view the fundamental group $\pio(M)$ as the set of equivalence classes of edge-loops starting and ending at the same vertex, namely $w$. For a group $G$, recall the category $\text{Rep}_{\mcalc} (G)$ from the introduction. 

%Before we state it, we need to recall some notation.  Let $m\tom$ denote the maximal tree of $\tm$. As we saw in Theorem~\ref{pim_thm}, the fundamental groupoid $\Pi(M)$ of $M$ can be viewed as the category whose objects are vertices of $\tm$, and whose morphisms are equivalence classes of edge-paths with respect to the relation \lq\lq$\simeq$\rq\rq{} introduced just after Definition~\ref{edge_loop_defn} (of \textit{edge-path} and of \textit{edge-loop}). Of course, the composition in $\Pi(M)$ is defined by the concatenation operation. In similar fashion, the fundamental group $\pio(M)$ can be viewed as the set of equivalence classes of edge-loops starting and ending at the same vertex, namely $w$. From now on, we fix $w$ once and for all. 

%We also have to recall some definitions. Let $\fpic$ be the category of contravariant functors from $\Pi(M)$ to $\mcalc$. The following is the definition of the category of representations. 

%\begin{defn} \label{repi_defn}
%For a group $G$, define $\repi$ as the category whose objects are  pairs $(A, \rho)$ where $A$ is an object of $\mcalc$, and $\rho \colon G \lra \aut(A)$ is a homomorphism of groups from $G$ to the group of automorphisms $\aut(A)$. A morphism  from $(A, \rho)$ to $(A', \rho')$ consists of a morphism $\varphi \colon A \lra A'$ in $\mcalc$ such that for all $x \in G$, the following square commutes. 
%\begin{eqnarray} \label{action_cond}
%\xymatrix{A   \ar[d]_-{\rho(x)} \ar[rr]^-{\varphi}  &  &  A' \ar[d]^-{\rho'(x)}  \\
 %        A \ar[rr]_-{\varphi}   &   &   A' }
%\end{eqnarray}
%\end{defn}
\begin{prop} \label{fpim_rep_prop}
Assume that $M$ is connected. Then the category $\fpic$ from Definition~\ref{fpivect_defn} is equivalent to the category $\repi$ of representations of $\pio(M)$ in $\mcalc$. That is,
$
\fpic \simeq \repi. 
$
\end{prop}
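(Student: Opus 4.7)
The plan is to exploit the fact that when $M$ is connected, the fundamental groupoid $\Pi(M)$ is equivalent (as a category) to the one-object groupoid $B\pio(M,w)$ determined by any basepoint $w$, and then transport this equivalence to the functor categories.

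First I would fix a vertex $w$ of $\tm$ and, using the maximal tree $m\tm \subseteq \tm$, choose for every vertex $v$ a distinguished edge-path $\gamma_v$ from $w$ to $v$, with $\gamma_w$ the constant path at $w$. Let $B\pio(M,w)$ denote the one-object category whose morphisms are equivalence classes of edge-loops at $w$, and let $j \colon B\pio(M,w) \hra \Pi(M)$ be the inclusion sending the unique object to $w$. Because $M$ is connected, every vertex is isomorphic to $w$ in $\Pi(M)$ via $\gamma_v$, so $j$ is fully faithful and essentially surjective, hence an equivalence of categories.

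Next I would define the functor $R \colon \fpic \lra \repi$ on objects by $R(F) := (F(w), \rho_F)$, where $\rho_F \colon \pio(M,w) \lra \aut(F(w))$ sends $[\ell] \mapsto F([\ell^{-1}])$ (the inverse is needed because $F$ is contravariant, so $[\ell] \mapsto F([\ell])$ is an antihomomorphism); on a natural transformation $\eta \colon F \lra F'$, set $R(\eta) := \eta[w]$, which is an intertwiner because of the naturality of $\eta$ applied to the loops at $w$. In the opposite direction, given $(A, \rho) \in \repi$, I would build $\Phi_{(A,\rho)} \colon \Pi(M) \lra \mcalc$ by putting $\Phi_{(A,\rho)}(v) := A$ for every vertex $v$ and, for each morphism $f \colon v \lra v'$ of $\Pi(M)$, setting
\[
\Phi_{(A,\rho)}(f) := \rho\bigl([\gamma_v \cdot f \cdot \gamma_{v'}^{-1}]^{-1}\bigr) \colon A \lra A.
\]
Functoriality follows at once from $\rho$ being a group homomorphism, and the assignment on morphisms of $\repi$ is the identity on underlying morphisms of $\mcalc$.

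Finally I would check that $R$ and $\Phi$ are mutually inverse up to natural isomorphism. The composite $R \circ \Phi$ is literally the identity on $\repi$ by construction (since $\gamma_w$ is trivial). For $\Phi \circ R \cong \id$, given $F \in \fpic$ I would define a natural isomorphism $\beta_F \colon \Phi_{R(F)} \lra F$ by $\beta_F[v] := F([\gamma_v])^{-1} \colon F(w) \lra F(v)$; its naturality on a morphism $f \colon v \lra v'$ reduces, after composing with $F$ applied to $\gamma_v, \gamma_{v'}$, to the tautology that the loop $\gamma_v \cdot f \cdot \gamma_{v'}^{-1}$ at $w$ is mapped by $F$ to $F([\gamma_v \cdot f \cdot \gamma_{v'}^{-1}])$.

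The main obstacle is bookkeeping rather than conceptual: one must keep the variance straight (the contravariance of $F$ forces the inverse in the definition of $\rho_F$) and verify that every construction is independent, up to the natural isomorphism $\beta$, of the choice of the tree of paths $\{\gamma_v\}$. This is the standard translation between representations of a group and functors out of the associated one-object groupoid, lifted across the equivalence $j$, and no further input beyond the connectedness of $M$ is needed.
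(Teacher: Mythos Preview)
Your approach is essentially identical to the paper's: both fix a basepoint $w$, use the maximal tree to produce distinguished edge-paths, and define the two functors and the natural isomorphism in the same way.  There is, however, a variance slip.  Under the paper's conventions an edge-path $(v_0,\ldots,v_r)$ is a morphism $v_0\to v_r$ in $\Pi(M)$ and categorical composition is concatenation, so for loops $f,g$ at $w$ one has $[f]\cdot[g]=[f*g]=g\circ f$ in $\Pi(M)$; since $F$ is contravariant, $F(g\circ f)=F(f)F(g)$, and therefore $[\ell]\mapsto F([\ell])$ is \emph{already} a group homomorphism $\pio(M,w)\to\aut(F(w))$.  Your extra inverse turns it into an antihomomorphism, and correspondingly your $\Phi_{(A,\rho)}$ becomes covariant rather than contravariant.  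The paper simply sets $\rho_F(f):=F(f)$ and $\Lambda(V,\rho)(f):=\rho(g_{wv_0}\,f\,g_{v_rw})$ with no inverse; dropping your inverses makes your argument match the paper's exactly.
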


\begin{proof}
We  define two functors 
\[
\xymatrix{\fpic \ar@<1ex>[rr]^-{\Theta} & & \repi \ar@<1ex>[ll]^-{\Lambda}}
\]
%and then we will show that $\Theta \Lambda \cong id$ and $\Lambda \theta \cong id$. 
as $\Theta(F) := (F(w), \rhof)$ where $\rhof(f) := F(f)$ for any edge-loop $f$. If $\eta \colon F \lra F'$ is a natural transformation, we define 
$\Theta(\eta) \colon (F(w), \rhof) \lra (F'(w), \rhofp)$ as  $\Theta(\eta) := \eta[w]$. Now define $\Lambda(V, \rho)(v) := V$. 
 To define $\Lambda(V, \rho)$ on morphisms of $\Pi(M)$, we need to introduce the following definition. An edge-path $f= (v_0, \cdots, v_r)$ is called \textit{reduced} if $v_i \neq v_j$ whenever $i \neq j$. Given two different vertices $v$ and $v'$, there exists a unique reduced edge-path, denoted $g_{vv'} = (v, v_1, \cdots, v_{r-1}, v')$, in the maximal tree $m\tom$.  This is because $m\tom$ is contractible since $M$ is connected by assumption. If $f = (v_0, \cdots, v_r)$  is an edge-path we define $\Lambda(V, \rho)(f):= \rho(\ftild)$  where $\ftild := g_{wv_0} fg_{v_rw}$.  On morphisms of $\repi$, we define $\Lambda$ in the obvious way. That is, $\Lambda(\varphi)[v] := \varphi$. By the definitions, it is straightforward to verify that  $\Theta\Lambda = id$ and $\Lambda\Theta \cong \id$, which completes the proof.  
\end{proof}

%We are now ready to prove Corollary~\ref{main_coro_paper}. 

\begin{proof}[Proof of Corollary~\ref{main_coro_paper}]
This follows immediately from Theorem~\ref{main_thm_paper} and Proposition~\ref{fpim_rep_prop}. 
\end{proof}

\section{Very good vector bundles}  \label{vgvb_section}

%In Section~\ref{generator_homogeneous_functor_section} we studied homogeneous functors  of degree $k$ from $\om$ to the category $\ch$ of chain complexes  in an abelian category $\mathcal{A}$. One of the most important results we derived  is Theorem~\ref{generated_thm}, which says that the category of homogeneous  functors (which is actually a triangulated category by Theorem~\ref{thick_sub_thm}) is generated by a certain subcategory $\mathcal{I}_k$ of \lq\lq very good functors\rq\rq{}. In this section we prove that $\mathcal{I}_k$, with $\mathcal{A} = \mcalc$ (the category of finite dimensional vector spaces), is equivalent to a particular class of vector bundles that we call \textit{very good vector bundles} (see Definition~\ref{very_good_vb_defn} below). More precisely, we prove Theorem~\ref{vgm_equivalence_thm}. We also prove Theorem~\ref{vbm_abelian_thm}, which asserts that our category of very good vector bundles is abelian.  Notice that the classical category of vector bundles over a fixed base is not abelian. 

We prove Theorem~\ref{vgm_equivalence_thm} below which states that  the category of very good functors, studied in  the previous sections, is equivalent to a particular class of vector bundles (which we call \textit{very good vector bundles} (see Definition~\ref{vgvb_defn} below)). We also prove Theorem~\ref{vbm_abelian_thm}, which states that our category of very good vector bundles is abelian.  %Notice that the traditional category of vector bundles over a fixed base is not abelian. 
Throughout this section, we will write $\fvect$ for the category of finite dimensional vector spaces over a field $\kbb$. 

%show that the category of very good functors (that we studied in previous sections) is equivalent to a particular class of vector bundles that we call \textit{very good vector bundles} (see Definition~\ref{vgvb_defn} below). Specifically, we prove Theorem~\ref{vgm_equivalence_thm}. We also prove Theorem~\ref{vbm_abelian_thm}, which asserts that our category of very good vector bundles is abelian.  Notice that the traditional category of vector bundles over a fixed base is not abelian. 

%Here the target category for our functors  is no longer $\mcalc$, as in previous sections, but $\fvect$ instead. 

% We will write  $\bomu$ to denote the category whose  objects are the open subsets of $M$ diffeomorphic to an open ball. Morphisms of $\bomu$ are inclusions. The objects of $\bomu$ are required to form a basis for the topology of $M$. Fix $\bomu$ once and for all in this section. 

\subsection{Definition and examples}  \label{vgvb_examples_subsection}

%\begin{defn}
%A functor $F \colon \om \lra \mcalc$ from $\om$ to finite dimensional vector spaces is called \emph{very good} if it sends isotopy equivalences to isomorphisms. 
%\end{defn}

Roughly speaking, a \textit{very good vector bundle} is a vector bundle in the classical sense endowed with an extra structure (which is a very good covariant functor) that satisfies the axioms for a vector bundle, and an additional axiom (which is some kind of compatibility between local trivializations). To be more precise, we have the following definition. 
%Recall that a functor from a subcategory of $\om$ to vector spaces is called \textit{very good} if it sends isotopy equivalences to isomorphisms.

\begin{defn} \label{vgvb_defn}
Let $\bomu$ as in Definition~\ref{bku_oku_defn}. The category  $\vgvb$  of \emph{very good vector bundles} is defined as follows. An object of  $\vgvb$ is a triple $(E, \pi, F_{\pi})$ where $E$ is a topological space,  $\pi \colon E \lra  M$ is a continuous surjection, and $F_{\pi} \colon \bomu \lra \fvect$ is a very good covariant functor. Such a triple is endowed with a family of homeomorphisms $\left\{\varphi^{\pi}_U \colon U \times F_{\pi} (U) \lra \pi^{-1}(U)\right\}_{U \in \bomu}$ that satisfy the following four axioms:
    \begin{enumerate}
		     \item[(VB0)] for every $x \in M$, the preimage $\pi^{-1}(x)$ is a vector space;
		     \item[(VB1)] for all $U \in \bomu$, and for all $(x, v) \in U \times F_{\pi} (U)$, $(\pi \circ \vfi_U^{\pi})(x, v) = x$;
				 \item[(VB2)]  for all $U \in \bomu$, and for all $x \in U$, the map $\vfi^{\pi}_{xU} \colon F_{\pi}(U) \lra \pi^{-1}(x)$ defined by $\vfi^{\pi}_{xU} (v) = \vfi^{\pi}_U (x, v)$ is an isomorphism of vector spaces. 
				 \item[(VB3)] For any morphism $j \colon U \hookrightarrow V$ of $\bomu$, the following square commutes.
				\[
				  \xymatrix{U \times F_{\pi}(U) \ar[rr]^-{\vfi^{\pi}_U} \ar[d]_{j \times F_{\pi}(j)} & & \pi^{-1}(U) \ar@{^{(}->}[d] \\
					          V \times F_{\pi}(V) \ar[rr]^-{\vfi^{\pi}_V} &  &  \pi^{-1}(V). }
				\]
		\end{enumerate}
  A morphism	from $(X, p,  F_p)$ to $(Y, q, F_q)$ consists of a pair $(f, \eta)$ where $f \colon X \lra Y$ is a map making the obvious triangle commute, and $\eta \colon F_p \lra F_q$ is a natural transformation such that for every $U \in \bomu$, the following square commutes. 
					\[
				  \xymatrix{U \times F_p(U) \ar[rr]^-{id \times \eta[U]} \ar[d]_{\vfi^p_U} & & U \times F_q(U)  \ar[d]^-{\vfi^{q}_U} \\
					       p^{-1}(U)    \ar[rr]^-{f} &  &  q^{-1}(U). }
				  \]
				%Here the restriction of $f \colon X \lra Y$ to $p^{-1}(U)$ is still denoted $f$, and $\eta[U]$ is the component of $\eta$ at $U$.  
%An object of $\vgvb$ is  called a \emph{very good vector bundle}. So  $\vgvb$ is called the \emph{category of very good vector bundles with respect to $\bomu$} or simply the \emph{category of very good vector bundles}. 
\end{defn}

By definition, any very good vector bundle is a vector bundle in the classical sense. But the converse is not true as shown Example~\ref{vb_not_verygood} below. 

\begin{expl} \label{vb_verygood}
Consider the Mobius bundle $(E, \pi)$ where $E = [0, 1] \times \rbb \slash \sim$ and \lq\lq $\sim$\rq\rq{} is the equivalence relation $\sim$ is generated by $(0, t) \sim (1, -t)$ for all $t \in \rbb$. Of course $\pi \colon E \lra M=S^1 \cong [0, 1]\slash 0 \sim 1$ is the canonical projection.  Also consider the convariant functor $F_{\pi} \colon \bomu \lra \fvect$ defined as $F_{\pi}(U) = \rbb$, and 
\[
F_{\pi}(U \hookrightarrow V) = \left\{ \begin{array}{cc}
                                        -id_{\rbb} & \text{if $U \subseteq (0, 1)$ and $[0] = [1] \in V$} \\
																				id_{\rbb} &  \text{otherwise}. 
                                       \end{array} \right.                            
\]
One can easily verify that $(E, \pi, F_{\pi})$ is a very good vector bundle. 
\end{expl}

Many other examples of  classical vector bundles are very good including line bundle over the real projective space $\rbb \pbb^n$. %As an example of a vector bundle which is not very good, we have the following.  

\begin{expl} \label{vb_not_verygood}
The canonical complex line bundle over $\cp^1 \cong S^2 =M$ is not a very good vector bundle essentially because of the fact that the second component of the local trivializations   $\vfi_U^{\pi}(x, z)$ depends on both variables $x$ and $z$.   
\end{expl}

\begin{rmk}  \label{mobius_rmk}
Consider the Mobius bundle $(E', \pi')$ where $E' = [0, 1]\times \rbb \slash (0, t) \sim (1, -2t)$, and $\pi' \colon E' \lra S^1$ is defined by $\pi'([x, t]) = [x]$. Also consider the vector bundle $(E, \pi)$ from  Example~\ref{vb_verygood}. Certainly the bundle $(E', \pi')$  is very good as in Example~\ref{vb_verygood}. The point is that $(E', \pi', F_{\pi'})$ and $(E, \pi, F_{\pi})$ are isomorphic as vector bundles, but not as very good vector bundles. 
\end{rmk}

\subsection{Proving that the category $\vgvb$  is abelian}  \label{vgvb_abelian_subsection}

To  prove that  $\vgvb$  is abelian, we need the following lemma.

\begin{lem} \label{ker_coker_lem}
Let $(f, \eta) \colon (X, p, F_p) \lra (Y, q, F_q)$ be a morphism of $\vgvb$. Let $U, V \in \bomu$ such that $U \cap V \neq \emptyset$. Then for any $x \in U$, and $y \in V$ ($x$ and $y$ need not lie in $U \cap V$), there are  isomorphisms $\noy f_x \cong F(U) \cong F(V) \cong \noy f_y,$ and $\coker f_x \cong G(U) \cong G(V) \cong \coker f_y,$ of vector spaces. Here $f_x \colon p^{-1}(x) \lra q^{-1}(x)$ is the canonical linear map induced by $f$, $F(U)$ and $G(U)$ are the kernel and  cokernel of $\eta[U] \colon F_p(U) \lra F_q(U)$ respectively. 
\end{lem}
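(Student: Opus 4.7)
The strategy is to reduce everything to the two axioms (VB2) and (VB3) of a very good vector bundle, together with the commuting square in the definition of a morphism of $\vgvb$ and the very-goodness of $F_p, F_q$. First I would show that for any $x\in U$ the fiber map $f_x$ is, up to the trivialization isomorphisms, the same as $\eta[U]$. Concretely, unpacking the square in Definition~\ref{vgvb_defn} for the morphism $(f,\eta)$ at the point $x\in U$, and using (VB2) to see that $\vfi^p_{xU}$ and $\vfi^q_{xU}$ are isomorphisms, I obtain
\[
 f_x\circ \vfi^p_{xU}=\vfi^q_{xU}\circ \eta[U],
\]
so $f_x=\vfi^q_{xU}\circ \eta[U]\circ(\vfi^p_{xU})^{-1}$. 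This immediately gives $\noy f_x\cong \noy\eta[U]=F(U)$ and $\coker f_x\cong\coker \eta[U]=G(U)$.

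The second step is to show $F(U)\cong F(V)$ (and similarly $G(U)\cong G(V)$) whenever $U\cap V\neq\emptyset$. The idea is to pick an open ball $W\in\bomu$ contained in $U\cap V$, which exists because $\mcalb(M)$ is a basis for the topology (it is enough to pick $W\in\mcalb(M)$ with $W\subseteq U\cap V$). Since $W,U,V$ are all open balls, the inclusions $W\hookrightarrow U$ and $W\hookrightarrow V$ are isotopy equivalences by Proposition~\ref{isotopy_equiv_prop}, and hence are morphisms of $\bomu$. Because $F_p$ and $F_q$ are very good, $F_p(W\hookrightarrow U)$, $F_p(W\hookrightarrow V)$, $F_q(W\hookrightarrow U)$ and $F_q(W\hookrightarrow V)$ are all isomorphisms. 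By naturality of $\eta$, each inclusion induces a map of short exact sequences
\[
 0\lra F(W)\lra F_p(W)\lra F_q(W)\lra G(W)\lra 0,
\]
into the analogous sequences at $U$ and $V$, in which the two middle vertical maps are isomorphisms. By the five lemma (or directly, by chasing the naturality square of $\eta$ along $W\hookrightarrow U$ and $W\hookrightarrow V$), the induced maps $F(W)\to F(U)$, $F(W)\to F(V)$, $G(W)\to G(U)$, $G(W)\to G(V)$ are isomorphisms. Composing, we obtain $F(U)\cong F(W)\cong F(V)$ and $G(U)\cong G(W)\cong G(V)$.

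Finally, for arbitrary $x\in U$ and $y\in V$, combining Step~1 with Step~2 yields
\[
 \noy f_x\cong F(U)\cong F(V)\cong \noy f_y,\qquad \coker f_x\cong G(U)\cong G(V)\cong \coker f_y,
\]
which is the conclusion. The main subtle point is Step~2: one must verify that $W\in\bomu$ can be chosen inside $U\cap V$ and that the inclusions $W\hookrightarrow U$, $W\hookrightarrow V$ really are isotopy equivalences in the sense of Definition~\ref{isotopy_defn}, so that one is entitled to invoke the very-good property. Both are automatic since $U,V,W$ are open balls, but they are the hinge of the argument; the rest is a direct unwinding of the axioms.
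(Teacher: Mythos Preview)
Your proposal is correct and follows essentially the same approach as the paper. The paper's proof is more terse---it simply asserts that ``the functor $U\mapsto F(U)$ is very good'' and lists the ingredients (nonempty intersection, basis property, morphisms of $\bomu$ being isotopy equivalences)---whereas you spell out the five-lemma/naturality argument that makes this explicit; both arguments hinge on the same commuting square from (VB2) and the definition of a morphism, and on passing through a ball $W\subseteq U\cap V$.
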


\begin{proof}
We will prove the first set of isomorphisms; the proof of the second set is similar. The first isomorphism, $\noy f_x \cong F(U)$,  is readily obtained from the following commutative diagram in which the isomorphisms $\vfi^p_{xU}$ and $\vfi^q_{xU}$ are provided by the axiom (VB2). 
\begin{eqnarray} \label{crucial_ab_diag}
\xymatrix{F(U) =\noy \eta[U] \ar@{^{(}->}[r] \ar@{.>}[d]_-{\lambda_{xU}}^-{\cong} & F_p(U) \ar[r]^-{\eta[U]} \ar[d]_-{\vfi^p_{xU}}^-{\cong} & F_q(U) \ar[d]_-{\vfi^q_{xU}}^-{\cong} \\ 
       \noy f_x \ar@{^{(}->}[r] & p^{-1}(x) \ar[r]^-{f_x} & q^{-1}(x).}
\end{eqnarray}
%In that diagram, the isomorphisms $\vfi^p_{xU}$ and $\vfi^q_{xU}$ are provided by the axiom (VB2), the  square on the right commutes because of the commutative property of (\ref{axiom2_morphism}), and  $\lambda_{xU}$ is  the obvious isomorphism, which is indeed the restriction of $\vfi^p_{xU}$ to $\noy \eta[U]$. 
The second isomorphism follows from the  facts (i) the intersection of $U$ and $V$ is not empty, (ii) objects of $\bomu$ form a basis for the topology of $M$, (iii) every morphism of $\bomu$ is an isotopy equivalence, and (iv) the functor $U \mapsto F(U)$ is very good. The last isomorphism is obtained in the same way as the first one. 
\end{proof}

\begin{thm} \label{vbm_abelian_thm}
The category $\vgvb$ of very good vector bundles over a manifold $M$ is an abelian category. 
\end{thm}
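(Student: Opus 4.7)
The plan is to verify the axioms of an abelian category one by one, leveraging Lemma~\ref{ker_coker_lem} and the fact that $\fvect$ is abelian. Throughout, I will exploit the observation that once $F_p$ is fixed, all fiber data of $(X, p, F_p)$ is essentially determined by the functor $F_p$ via the local trivializations $\vfi_U^p$; morphisms of very good vector bundles are therefore governed by the natural transformations $\eta$.

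First I would exhibit the zero object: take $Z = M \times \{0\}$, $\pi_Z$ the first projection, and $F_{\pi_Z}$ the constant functor at the zero vector space. The (VB0)--(VB3) axioms are trivial. For the additive structure, given two objects $(X, p, F_p)$ and $(Y, q, F_q)$, define their biproduct as the fiberwise direct sum $(X \oplus Y, p \oplus q, F_p \oplus F_q)$, where $(F_p \oplus F_q)(U) := F_p(U) \oplus F_q(U)$ and the local trivializations are the componentwise ones $\vfi_U^p \oplus \vfi_U^q$; compatibility with inclusions (VB3) is immediate from that of each summand. Addition of morphisms $(f, \eta), (f', \eta') \colon (X, p, F_p) \lra (Y, q, F_q)$ is defined fiberwise using the vector space structure on $q^{-1}(x)$, which corresponds under $\vfi^q_{xU}$ to addition of natural transformations $\eta + \eta'$ in $\mcalf(\bomu; \fvect)$.

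Next I would construct kernels and cokernels. For a morphism $(f, \eta) \colon (X, p, F_p) \lra (Y, q, F_q)$, set $K := \bigsqcup_{x \in M} \noy f_x \subseteq X$ with the subspace topology, $\pi_K := p|_K$, and $F_{\pi_K}(U) := \noy \eta[U]$ functorially (the restriction of $F_p(j)$ sends kernels to kernels since $\eta$ is natural). The local trivialization $\vfi^{\pi_K}_U \colon U \times \noy \eta[U] \lra \pi_K^{-1}(U)$ is defined pointwise by the isomorphism $\lambda_{xU}$ of diagram (\ref{crucial_ab_diag}); its continuity follows from the continuity of $\vfi_U^p$ and the fact that $\noy \eta[U]$ is a linear subspace of $F_p(U)$. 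The axiom (VB3) for $F_{\pi_K}$ is inherited from (VB3) for $F_p$ by restriction to the kernel subspaces. Very-goodness of $F_{\pi_K}$ follows from Lemma~\ref{ker_coker_lem}: for an isotopy equivalence $U \hra V$, the horizontal isomorphisms in (\ref{crucial_ab_diag}) together with the induced isomorphism $\noy f_x \cong \noy f_y$ force $\noy \eta[U] \cong \noy \eta[V]$ via $F_p(U \hra V)$. Cokernels are constructed dually, using the second set of isomorphisms in Lemma~\ref{ker_coker_lem} and quotienting fiberwise.

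Finally I would verify that every monomorphism is the kernel of its cokernel and every epimorphism is the cokernel of its kernel. The key reduction is this: a morphism $(f, \eta)$ is mono (respectively epi) in $\vgvb$ if and only if $\eta[U]$ is injective (respectively surjective) for every $U \in \bomu$; this follows because $\vfi^p_{xU}$ and $\vfi^q_{xU}$ are isomorphisms, so the fiber maps $f_x$ and the components $\eta[U]$ determine each other, and because the very-good functors $\bomu \lra \fvect$ form an abelian subcategory of the functor category. Once this translation is established, the axioms for an abelian category in $\vgvb$ reduce term-by-term to the corresponding (already known) axioms in $\fvect$ applied at each $U \in \bomu$, with compatibility across $U \subseteq V$ ensured by the naturality of $\eta$ and the diagram (\ref{crucial_ab_diag}).

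The main obstacle is the topological well-definedness of the kernel and cokernel bundles, specifically the continuity and homeomorphism property of the local trivializations assembled from the pointwise maps $\lambda_{xU}$. Once this point-set verification is in place, everything else is a careful bookkeeping exercise translating standard facts about $\fvect$ through the local trivializations.
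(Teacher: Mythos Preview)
Your proposal is correct and follows essentially the same route as the paper: the zero object, biproduct (fiber product over $M$ with $F_\pi = F_p \oplus F_q$), kernel and cokernel built fiberwise with $F_{\pi_K}(U) = \ker \eta[U]$ and local trivializations assembled from the isomorphisms $\lambda_{xU}$ of diagram~(\ref{crucial_ab_diag}), and the reduction of (Ab4) to the abelianness of $\fvect$ all match the paper's argument. Your write-up is somewhat more explicit (you spell out the additive structure on hom-sets and the very-goodness of $F_{\pi_K}$), but there is no substantive divergence in strategy.
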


\begin{proof}
First recall that a category is \emph{abelian} if it satisfies the following four axioms: (Ab1) it has a zero object, (Ab2) it has all binary products and binary coproducts, (Ab3) it has all kernels and cokernels, and (Ab4)  every monomorphism (respectively epimorphism) is a kernel (respectively cokernel) of a map. 

$\bullet$ Axiom (Ab1). The zero object is $(X_0, p_0, F_{p_0})$ where $X_0 = M \times \{0\}$, for all $x \in M, p_0(x,0) = x$, and for all $U \in \bomu, F_{p_0}(U) = \{0\}$, the trivial vector space.   
%Given another object $(E, \pi, F_{\pi})$ of $\vgvb$, one can easily check the following two items. 
  %\begin{enumerate}
%	\item[$\bullet$] The unique morphism from $(X_0, p_0, F_{p_0})$ to $(E, \pi, F_{\pi})$ is the pair $(f_0, \alpha_0)$ defined by $f_0(x, 0) = 0 \in \pi^{-1}(x)$, and $\alpha_0(U) \colon \{0\} \lra F_{\pi}(U)$ is indeed the zero morphism.
%	\item[$\bullet$] The unique morphism from $(E, \pi, F_{\pi})$ to $(X_0, p_0, F_{p_0})$ is the pair $(g_0, \beta_0)$ defined by $g_0(x) = (\pi(x), 0)$, and $\beta_0(U) \colon F_{\pi}(U) \lra \{0\}$ is indeed  the zero morphism too. 
%	\end{enumerate}

$\bullet$ Axiom (Ab2). Let $(X, p, F_p)$ and $(Y, q, F_q)$ be two objects of $\vgvb$. Define a new object $(E, \pi, F_{\pi})$ as follows. The total space is  the pullback of $\xymatrix{X \ar[r]^-{p} & M & Y \ar[l]_-{q}}$. The projection is $\pi := pf_p = qf_q$, where $f_p \colon E \lra X$ and $f_q \colon E \lra Y$ are the projections on the first and second component respectively.  The covariant functor $F_{\pi}$ is defined as the product $F_{\pi} = F_p \times F_q$. For every $U$, define $\vfi^{\pi}_U (x, v_1, v_2) := (\vfi^p_U(x, v_1), \vfi^q_U(x, v_2))$.  It is clear that $\vfi^{\pi}_U$ is an homeomorphism because so are $\vfi^p_U$ and $\vfi^q_U$. It is also clear that $(E, \pi,  F_{\pi})$ satisfies the axioms of a very good vector bundle. One can easily verify that $(E, \pi,  F_{\pi})$ is the  product as well as the coproduct of $(X, p, F_p)$ and $(Y, q, F_q)$.  

$\bullet$ Axiom (Ab3). Let $(f, \eta) \colon (X, p, F_p) \lra (Y, q, F_q)$ be a morphism of $\vgvb$. We wish to construct a new object $(E, \pi, F_{\pi})$, which will represent the kernel of $(f, \eta)$. First of all, define $E$ as 
$E = \left\{(x, v)\ | \ x \in M, v \in \noy f_x \right\}.$
Next define $\pi \colon E \lra M$ by $\pi(x, v) = x$, and $F_{\pi} \colon \bomu \lra \fvect$ by $F_{\pi}(U) = \noy \eta[U]$.  Recalling the first isomorphism of Lemma~\ref{ker_coker_lem} or more precisely the isomorphism $\lambda_{xU}$ from (\ref{crucial_ab_diag}), we define the local trivialization $\vfi^{\pi}_{U} \colon U \times F_{\pi}(U) \lra \pi^{-1}(U)$  by the formula   $\vfi^{\pi}_{U}(x, v) = (x, \lambda_{xU}(v)).$
It is clear that the triple $(E, \pi, F_{\pi})$ thus defined is an object of $\vgvb$.  Moreover  $(E, \pi, F_{\pi})$ is the desired kernel.  Similarly, one has the cokernel of $(f, \eta) \colon (X, p, F_p) \lra (Y, q, F_q)$, which is obtained  by replacing in the preceding construction \lq\lq ker\rq\rq{} by \lq\lq coker\rq\rq{}. 

$\bullet$ Axiom (Ab4). Since we defined the kernel and the cokernel fibrewise, and since the category of vector spaces is abelian, it follows that every monomorphism in $\vgvb$ is the kernel of its cokernel, and every epimorphism is the cokernel of its kernel. 
\end{proof}

\begin{rmk}  \label{abelian_rmk}
The traditional category of vector bundles over a fixed base is not abelian as there is an issue with the kernel of a morphism, which is not a bundle in any natural way. For instance, consider the trivial bundle $(X, p)$ over $\rbb$ with $X = \rbb \times \rbb$, and $p(x, y) = x$. Also consider the map $f \colon X \lra X$ defined by $f(x, y) = (x, xy)$. Fibrewise, the kernel of $f$ is $\rbb$ over $0$, and is reduced to the trivial vector space otherwise. So the map $x \mapsto \mbox{dim}(\noy f_x)$ is not locally constant, and therefore the kernel of $f$ is not a vector bundle neither in the classical sense nor in  the sense of Definition~\ref{vgvb_defn}.  A similar issue happens to the cokernel. The crucial thing that turns our category of vector bundles  into an abelian category is Lemma~\ref{ker_coker_lem}. 
\end{rmk}

\subsection{Equivalence between $\vgvbf$ and $\vgfm$}   \label{vgvb_equivalence_subsection}

The goal of this subsection is to prove Theorem~\ref{vgm_equivalence_thm}, which says that the category of very good vector bundles is equivalent to the category of very good functors. Recall the notation $F_k(M)$ from Section~\ref{notation_section}.

%We use the same notations as those introduced in Section~\ref{vghf_section}. Thus we let $F_k(M)$ denote the unordered configuration space of $k$ points in $M$.  Also we let $\bou(F_k(M))$ denote the category whose objects are exactly the product of $k$ objects of $\bomu$, where $\bomu$ is as in Definition~\ref{vgvb_defn}.  Of course a morphism in  that category is an inclusion. Note that every object of $\bou(F_k(M))$ is diffeomorphic to an open ball in $F_k(M)$. Note also that objects of $\bou(F_k(M))$ form a basis for the canonical topology of $F_k(M)$.  Now we let $\vgfm$ denote the category of very good contravariant functors $F \colon \bou(F_k(M)) \lra \fvect$ from $\bou(F_k(M))$ to finite dimensional vector spaces, morphisms being indeed natural transformations. 

\begin{thm} \label{vgm_equivalence_thm}
The category $\vgvbf$ of very good vector bundles over $F_k(M)$ is equivalent to the category $\vgfm$ of very good contravariant functors from $\bou(F_k(M))$ to $\fvect$. 
%\begin{eqnarray} \label{vgvbf_vgfm}
%\vgvbf \simeq \vgfm. 
%\end{eqnarray}
\end{thm}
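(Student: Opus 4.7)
The plan is to exhibit a pair of quasi-inverse functors
\[
\Phi \colon \vgvbf \lra \vgfm \qquad \text{and} \qquad \Psi \colon \vgfm \lra \vgvbf.
\]
The only categorical subtlety is that $\vgvbf$ involves a \emph{covariant} very good functor $F_\pi$, while $\vgfm$ consists of \emph{contravariant} ones; this is harmless because every morphism of $\bou(F_k(M))$ is an isotopy equivalence (Definition~\ref{bku_oku_defn}), so any very good functor sends each arrow to an isomorphism and can be dualised by inversion.

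\textbf{The two functors.} Define $\Phi$ on objects by sending $(E, \pi, F_\pi)$ to the contravariant functor $U \mapsto F_\pi(U)$, $j \mapsto F_\pi(j)^{-1}$, and on a morphism by $\Phi(f, \eta) := \eta$. Conversely, given $G \in \vgfm$, for each $x \in F_k(M)$ set
\[
E_x := \lim_{U \in \bou(F_k(M)),\, U \ni x} G(U),
\]
taken over the codirected poset of basic open neighbourhoods of $x$; since every arrow in the diagram is an isomorphism, each structural projection $\iota_{U,x} \colon E_x \to G(U)$ is an isomorphism satisfying $G(j) \circ \iota_{V,x} = \iota_{U,x}$ for $j \colon U \hookrightarrow V$ with $x \in U$. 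Put $E := \bigsqcup_{x} E_x$ with $\pi(x, v) := x$, and define bijections $\varphi_U \colon U \times G(U) \to \pi^{-1}(U)$ by $\varphi_U(x, v) := (x, \iota_{U,x}^{-1}(v))$; topologise $E$ by declaring each $\varphi_U$ to be a homeomorphism. Set $F_\pi := G$ on objects and $F_\pi(j) := G(j)^{-1}$ on morphisms. Axioms (VB0)--(VB2) are immediate, and (VB3) is exactly the limit identity $\iota_{U,x} = G(j) \circ \iota_{V,x}$. On a morphism $\eta \colon G \to G'$, take $\Psi(\eta) := (f_\eta, \eta)$, where $f_\eta$ is the fibrewise linear map induced by the components $\eta[U]$ through the trivializations.

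\textbf{Quasi-inverseness and main obstacle.} Tautologically $\Phi\Psi(G)(U) = G(U)$ and $\Phi\Psi(G)(j) = (G(j)^{-1})^{-1} = G(j)$, so $\Phi\Psi \cong \id$. For $\Psi\Phi \cong \id$, the isomorphisms $\varphi^\pi_{xU}$ of axiom (VB2), compatible through (VB3), form a cone on the limit diagram defining $\Psi\Phi(E, \pi, F_\pi)_x$; this produces a canonical fibrewise isomorphism $\pi^{-1}(x) \cong \Psi\Phi(E, \pi, F_\pi)_x$, which (VB3) upgrades to a $\vgvbf$-isomorphism. The one genuinely technical point is verifying that the trivialization-generated topology on $E$ in the construction of $\Psi$ is well defined: one must check that for $U, U' \in \bou(F_k(M))$ with $U \cap U' \neq \emptyset$, the transition $\varphi_{U'}^{-1} \circ \varphi_U$ is a homeomorphism on its domain of definition. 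Using that $\bou(F_k(M))$ forms a basis for the topology of $F_k(M)$, this reduces locally to the case $U \subseteq U'$, where the transition is $(x, v) \mapsto (x, G(U \hookrightarrow U')^{-1}(v))$ --- manifestly a homeomorphism, since $G(U \hookrightarrow U')^{-1}$ is a linear isomorphism of finite-dimensional vector spaces. All remaining checks (functoriality of $\Phi$ and $\Psi$, compatibility with morphisms of $\vgvbf$, and naturality of the two comparison isomorphisms) are routine.
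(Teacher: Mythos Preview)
Your proof is correct and follows the same overall strategy as the paper's: extract the underlying functor in one direction, and reconstruct a bundle from a functor in the other. There are, however, two small but genuine differences worth noting.

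First, the paper handles the covariant/contravariant discrepancy by passing through vector-space duals: it shows $\vgfm \simeq \mathcal{F}^*\!\left(\bou(F_k(M));\fvect\right)$ (covariant very good functors) via $V \mapsto V^*$, and then establishes the equivalence with $\vgvbf$ on the covariant side. Your approach instead inverts each $F_\pi(j)$ directly, which is legitimate precisely because very good functors send all arrows of $\bou(F_k(M))$ to isomorphisms. Your route is arguably cleaner and does not rely on finite-dimensionality (needed for double-dual to be an equivalence); the paper's route is slightly more categorical in that it never inverts individual arrows by hand.

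Second, to build the total space from a functor $G$, the paper takes the colimit $E_G = \mathrm{colim}_{U \in \bou} \, U \times G(U)$ and reads off the trivializations as the colimit structure maps, whereas you construct each fibre as $E_x = \lim_{U \ni x} G(U)$ and then assemble. These are two presentations of the same object: your fibrewise limit is exactly the stalk of the paper's colimit, and the transition-map/topology check you carry out is the same content as the colimit identifications. The colimit description is terser; your description makes the local trivializations and axiom (VB3) more transparent.
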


\begin{proof}
It suffices to work with $k =1$; the proof remains unchanged for other values of $k$. Let $\vgms$ denote the category of very good covariant functors $F \colon \bomu \lra \fvect$. Since every object of $\fvect$ is a finite dimensional vector space, the functor 
$
\vgm \lra \vgms
$
that sends a contravariant functor to its dual (defined objectwise) is an equivalence of categories.  Now we prove that the categories $\vgms$ and $\vgvb$ are equivalent. To proceed, we define a pair of functors 
\[
\theta \colon \vgms \rightleftarrows \vgvb : \psi
\]
For $\theta$, let $G$ be an object of $\vgms$. Define $\theta (G) = (E_G, \pi_G, F_{\pi_G})$ as 
\[E_G := \underset{U \in \bomu}{\mbox{colim}} U \times G(U) = \left(\coprod_{U \in \bomu} U \times G(U)\right)/\sim,\]
where $\sim$ is the usual equivalence relation for colimits.   The map $\pi_G$ is defined by  $\pi_G[(x,v)] =x$, and the functor $F_{\pi_{G}}$ is the same as $G$.  Now define the local trivialization 
$\vfi^{\pi_G}_U \colon U \times G(U) \lra \pi_{G}^{-1}(U)$ by  $\vfi^{\pi_G}_U(x, v) = [(x,v)].$  It is straightforward to check that $(E_G, \pi_G, F_{\pi_G})$ is a very good vector bundle. 
%Clearly $\vfi^{\pi_G}_U$ is a homeomorphism because the functor $G$ is very good and every morphism of $\bomu$ is an isotopy equivalence (both  arguments imply that for any $j \colon U \hookrightarrow V$ in $\bomu$, the image $G(j) \colon G(U) \lra G(V)$ is an isomorphism). The axioms of a very good vector bundle are readily verified. The axiom (VB0) comes from the second isomorphism of (\ref{iso_ker}). The remaining axioms ((VB1), (VB2), and (VB3)) immediately follow from the definitions of $\pi_G$ and $\vfi^{\pi_G}_U$.  So $\theta(G)$ is an object of $\vgvb$. On morphisms $\theta$ is defined in the obvious way. That is, if $\eta \colon G \lra G'$ is a morphism of $\vgms$, then $\theta (\eta) = (f_{\eta}, \eta)$, where $f_{\eta}[(x, v)] = [(x, \eta[U](v))]$ for some $U$ containing $x$. 
Now define $\psi$ as  $\psi(E, \pi, F_{\pi}) = F_{\pi}$. By the definitions, one has $\psi\theta = id$ and $\theta\psi \cong id$, which completes the proof. 
\end{proof}

%The following proposition, in which the abelian structure on $\vgms$ is the obvious one, follows immediately from Theorem~\ref{vgm_equivalence_thm}.

%\begin{prop}
%The pair $(\theta, \psi)$ from (\ref{pair_theta_psi}) preserves the abelian structure. So $\vgms$ and $\vgvb$ are equivalent as abelian categories. The same assertion holds for  $
%\vgmsf$ and $\vgvbf, k \geq 2$.
%\end{prop}

%Notice that $\vgm$ and $\vgms$ are not equivalent as abelian categories because of the bad behaviour of the dual functor with respect to the kernel and the cokernel.  

%\begin{rmk}
%Looking at Theorem~\ref{vgm_equivalence_thm}, and the equivalences that appear in the proof of Theorem~\ref{main_thm_paper}, one may ask the question to know whether the category $\vgvbf$  is equivalent to the category $\mathcal{F}_k(\om; \fvect)$ of very good homogeneous functors (of degree $k$) into $\fvect$. This question has a negative answer in general since the category $\fvect$ is not closed under  small limits. 

%(\ref{main_thm_eqn2}) is no longer true when replacing \lq\lq$\mcalc$\rq\rq{} by  \lq\lq$\fvect$\rq\rq{} because of the following reason. In the proof of Theorem~\ref{equiv_cat_thm}, which was done at the end of Section~\ref{vghf_section}, we used Kan extensions which are defined in terms of limits. In particular we used  a functor $\psi_3$ defined as $
%\psi_3(F)(U) = \underset{V \in \ok(U)}{\lim} \ F(V).
%$
%The problem is the fact that this limit could be an infinite dimensional vector space. This happens  for instance, when $k =1$ and $U$ has infinitely many components. 
%\end{rmk}

We close this section with a corollary. In the particular case when $\bou(M)$ is constructed from a triangulation $\tm$ of $M$, the category of very good vector bundles is deeply related to the category of representations of $\pio(M)$. Specifically, one has the following result.

\begin{coro}
Let $\mathcal{T}^{M}$ be a triangulation of $M$. As in Remark~\ref{bofm_butm} take $\mcalb_{\mcalu(\mcalt^M)}$ as the basis for the topology of $M$.
 Assume  that $M$ is  connected. Then one has 
$
\vgvb \simeq f\kbb[\pio(M)]\text{-}\emph{Mod},
$
where $f\kbb[\pio(M)]\text{-}\emph{Mod}$ denotes the category of finite dimensional representations of $\pio(M)$ over $\kbb$. 
\end{coro}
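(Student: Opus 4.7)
The plan is to chain together equivalences that have already been established in the paper. Concretely, I will string together four equivalences so that
\[
\vgvb \;\simeq\; \vgm \;\simeq\; \mcalf(\butm; \fvect) \;\simeq\; \fpic[\fvect] \;\simeq\; f\kbb[\pio(M)]\text{-Mod},
\]
where by slight abuse $\fpic[\fvect]$ denotes $\mcalf(\Pi(M); \fvect)$.

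First I would apply Theorem~\ref{vgm_equivalence_thm} (with $k = 1$) to reduce from very good vector bundles over $M$ to very good contravariant functors $\bou(M) \lra \fvect$; this identifies $\vgvb \simeq \vgm$. Next, Remark~\ref{bofm_butm} ensures that with the choice $\mcalb(M) = \butm$ coming from the triangulation $\mcalt^M$, we have $\bou(M) = \butm$, so $\vgm = \mcalf(\butm; \fvect)$. Then Proposition~\ref{equiv_cat_vectn_prop}, applied with $\mcalc = \fvect$, gives $\mcalf(\butm; \fvect) \simeq \mcalf(\utm; \fvect)$ — this is where the axioms of the very good cover $\utm$ do the real geometric work, but that work has already been done.

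Next I would invoke Theorem~\ref{crucial_thm} (again with $\mcalc = \fvect$) to obtain $\mcalf(\utm; \fvect) \simeq \mcalf(\Pi(M); \fvect)$, passing from the combinatorial cover to the fundamental groupoid. Since $M$ is connected by hypothesis, Proposition~\ref{fpim_rep_prop} then yields $\mcalf(\Pi(M); \fvect) \simeq \text{Rep}_{\fvect}(\pio(M))$. Finally I would observe, exactly as in the remark following Corollary~\ref{main_coro_paper} but specialized to the subcategory $\fvect \subseteq \kbb\text{-Mod}$, that a representation of $\pio(M)$ in $\fvect$ is the same data as a finite dimensional module over the group ring $\kbb[\pio(M)]$, giving the identification $\text{Rep}_{\fvect}(\pio(M)) \simeq f\kbb[\pio(M)]\text{-Mod}$.

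Since every step is a direct application of a previously proved result, there is no serious obstacle; the only point requiring a brief justification is the last one, namely that the equivalence $\text{Rep}_{\mcalc}(G) \simeq \mcalr[G]\text{-Mod}$ from the introductory remark restricts correctly to the finite dimensional subcategory when $\mcalc = \fvect$ and $\mcalr = \kbb$. This is immediate because the underlying vector space of a representation and of its associated $\kbb[G]$-module coincide, so finite dimensionality is preserved in both directions.
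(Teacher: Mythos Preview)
Your proof is correct and follows essentially the same route as the paper's own proof, which simply cites Theorem~\ref{vgm_equivalence_thm}, Proposition~\ref{equiv_cat_vectn_prop}, and Theorem~\ref{crucial_thm}. You have merely been more explicit in spelling out the intermediate identifications (Remark~\ref{bofm_butm}, Proposition~\ref{fpim_rep_prop}, and the passage from $\text{Rep}_{\fvect}(\pio(M))$ to $f\kbb[\pio(M)]\text{-Mod}$) that the paper leaves implicit.
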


\begin{proof} 
This follows from Theorem~\ref{vgm_equivalence_thm}, Proposition~\ref{equiv_cat_vectn_prop} and Theorem~\ref{crucial_thm}. 
\end{proof}

\addcontentsline{toc}{section}{References}

\textsf{University of Regina, 3737 Wascana Pkwy, Regina, SK S4S 0A2, Canada\\
Department of Mathematics and Statistics\\}
\textit{E-mail address: pso748@uregina.ca}

\textsf{University of Regina, 3737 Wascana Pkwy, Regina, SK S4S 0A2, Canada\\
Department of Mathematics and Statistics\\}
\textit{E-mail address: donald.stanley@uregina.ca}


\begin{thebibliography}{99}
%\bibitem{aro_lam_vol07} G. Arone, P. Lambrechts, and I. Voli\'c , \textit{Calculus of functors, operad formality, and rational homology of embedding spaces}, Acta Math. 199 (2007), no. 2, 153--198.
%\bibitem{aro_tur12} G. Arone and V. Turchin, \textit{On the rational homology of high dimensional analogues of spaces of long knots}, arXiv:1105.1576v4 (2013).
%\bibitem{ber_moe03} C. Berger, I. Moerdijk, \textit{Axiomatic homotopy theory for operads}, Comment. Math. Helv., vol. 78 (2003), no. 4, 805--831.
%\bibitem{board_vogt68} J.M. Boardman and R.M. Vogt, \textit{Homotopy-everything H-spaces}, Bull. Amer. Math.  Soc. 74 (1968), 1117-1122. 
%\bibitem{boavida_weiss} P. Boavida de Brito and M. Weiss, \textit{Manifold calculus and homotopy sheaves}, To appear in  Homology Homotopy Appl.
%\bibitem{bous87} A.K. Bousfield, \textit{On the homology spectral sequence of a cosimplicial space}, Amer. J. Math., vol. 109 (1987), no. 2, 361--394.
%\bibitem{bous_kan72} A.K. Bousfield and D. M. Kan,  \textit{Homotopy limits, completion and localizations}, Springer-Verlag, Berlin, 1972, Lecture Notes in Mathematics, Vol. 304.
%\bibitem{bud07} R. Budney, \textit{Little cubes and long knots}, Topology 46 (2007), no. 1, 1--27.
\bibitem{buo03}  S. Buoncristiano, \textit{Fragments of geometric topology from the sixties},  Geom. Topol. 6 (2003). 
%\bibitem{cat_cot_ram_lon02} A. S. Cattaneo, P. Cotta-Ramusino, R. Longoni, \textit{Configuration spaces and Vassiliev classes in any dimension}, Algebr. Geom. Topol. 2 (2002) 949--1000. 
%\bibitem{cohen76} F. Cohen, T. Lada, and J. May, \textit{The homology of iterated loop spaces}, Springer-Verlag, Berlin, 1976, Lecture Notes in Mathematics, Vol. 533.  
%\bibitem{dwyer_hess10} W. Dwyer, K. Hess, \textit{Long knots and maps between operads}, Geom. Topol., vol.  16 (2012), no. 2, 919--955.
%\bibitem{fht00} Y. F\'elix, S. Halperin, J.C, Thomas, \textit{Rational Homotopy Theory}, Springer-Verlag, 2000.
%\bibitem{ful_har91}   W. Fulton and J. Harris, \textit{Representation theory. A first course.}, Graduate Texts in Mathematics, 129. Readings in Mathematics. Springer-Verlag, New York, 1991. xvi+551 pp. ISBN: 0--387--97527--6; 0--387--97495--4. 
%\bibitem{fress08} B. Fresse, \textit{Modules over Operads and Functors}, Lectures Notes in Mathematics, vol. 1967, Springer-Verlag, Berlin (2009).
\bibitem{gallier08}  J. Gallier, \textit{Notes on Convex Sets, Polytopes, Polyhedra Combinatorial Topology, Voronoi Diagrams and Delaunay Triangulations}, arXiv:0805.0292. 
% Link for the new version of the book of Gallier: http://www.cis.upenn.edu/~jean/gbooks/convexpoly.html
%\bibitem{ger_vor95} M. Gerstenhaber, A. A. Voronov, \textit{Homotopy G-algebras and moduli space operad}, Internat. Math. Res. Notices 1995, no. 3, 141--153 (electronic).
%\bibitem{getzler_jones} E. Getzler, J. D. S. Jones,  \textit{Operads, homotopy algebra and iterated integrals for double loop spaces}, arXiv:hep-th/9403055.
%\bibitem{goe_jar09} P. G. Goerss and J. F. Jardine, \textit{Simplicial Homotopy Theory}, Springer, vol. 174, 2009.
%\bibitem{goodwillie91} T. Goodwillie, Calculus II, \textit{Analytic functors}, K-Theory 5 (1991/1992) 295--332.
%\bibitem{goodwillie03} T. Goodwillie, Calculus III,  \textit{Taylor series}, Geom. Topol. 7 (2003) 645--711. 
\bibitem{good_weiss99} T. G. Goodwillie and M. Weiss, \textit{Embeddings from the point of view of immersion theory: Part II}, Geom. Topol. 3 (1999) 103--118.
%\bibitem{har_lam_tur_vol08} R. Hardt, P. Lambrechts, V. Turchin, and I. Voli\'c, \textit{Real homotopy theory of semi-algebraic sets},  Algebr. Geom. Topol. 11 (2011), no. 5, 2477--2545.
\bibitem{hatcher02} Allen Hatcher, \textit{Algebraic Topology}, Cambridge University Press, 2002. 
\bibitem{hirsch76} M. W. Hirsch, \textit{Differential Topology}, Springer-Verlag, New York Heidelberg, Berlin, 1976.
%\bibitem{hir03} P. Hirschhorn, \textit{Models categories and their localizations}, Mathematical Surveys and Monographs, 99. American Mathematical Society, Providence, RI, 2003.
%\bibitem{hovey99}  M. Hovey, \textit{Model categories}, Mathematical Surveys and Monographs, vol. 63, \textit{American Mathematical Society, Providence, RI}, 1999.
%\bibitem{hovey98} M. Hovey, \textit{Monoidal model categories}, preprint arXiv:math. AT/9803002(1998).
%\bibitem{komawila12} G. Komawila Ngidioni and P. Lambrechts, \textit{Euler series, Stirling numbers and the growth of the homology of the space of long links}, Belgian Mathematical Society-Simon Stevin. Bulletin. (2012) (1370--1444). 
%\bibitem{kont99} M. Kontsevich, \textit{Operads and motives in deformation quantization}, Mosh\'e Flato (1937-1998), Lett. Math. Phys. 48 (1999), no. 1, 35--72.
%\bibitem{lam_tur_vol10} P. Lambrechts, V. Turchin and I. Vol\'c, \textit{The rational homology of spaces of long knots in codimension > 2},  Geom. Topol. 14 (2010) 2151--2187. 
%\bibitem{lam_vol} P. Lambrechts and I. Voli\'c, \textit{Formality of the little N-disks operad} (2014), Memoirs of the American Mathematical Society, Volume 230, Number 1079.
%\bibitem{maclane98} S. Mac Lane, \textit{Categories for the Working Mathematician} (second edition), Graduate Texts in Mathematics 5, Springer Verlag, 1998.
%\bibitem{may92} J. P. May, \textit{Simplicial objects in algebraic topology}, University of Chicago Press, Chicago, IL, 1992.
%\bibitem{may72} P. May, \textit{The geometry of iterated loop spaces}, Lectures Notes in Mathematics, Vol. 271, Springer-Verlag, Berlin-New York, 1972.
%\bibitem{may01} J. P. May, \textit{The additivity of traces in triangulated categories}, Adv. Math. 163 (2001), no. 1, 34--73. 
%\bibitem{mcc01} J. McCleary, \textit{A user's guide to spectral sequences}, Second Edition, Cambridge University Press, 2001.
%\bibitem {mcc_smith02} JE McClure, JH Smith, \textit{A solution of Deligne's Hochschild cohomology conjecture}, from: "Recent progress in homotopy theory(Baltimore, MD, 2000)", Contemp. Math. 293, Amer. Math. Soc. (2002) 153--193 MR1890736.
%\bibitem{mcc_smith04} JE.McClure and JH. Smith,\textit{ Cosimplicial objects and little n-cubes I}, Amer. J . Math., 126 (2004), no. 5,  1109--1153.
%\bibitem{mcc_smith040} JE McClure, JH Smith, \textit{Operads and cosimplicial objects: an introduction}, Axiomatic, enriched and motivic homotopy theory, 133-171, NATO Sci. Ser. II Math. Phys. Chem., 131, Kluwer Acad. Publ., Dordrecht, 2004.
%\bibitem{moriya12} S. Moriya, \textit{Sinha's spectral sequence and homotopical algebra of operads}, arXiv:1210.0996v2 (2012).
%\bibitem{mun_vol09} B. A. Munson and I. Voli\'c, \textit{Cosimplicial model for spaces of links}, Journal of Homotopy and Related Structures, To appear.
\bibitem{mun60}  J. Munkres, \textit{Obstruction to the smoothing of piecewise-differentiable homeomorphisms},   Ann. of Math. (2), 72 (1960), 521--554. 
%\bibitem{muro11} F. Muro, \textit{Homotopy theory of nonsymmetric operads}, Algebr. Geom. Topol. 11 (2011), no. 3,  1541--1599.
%\bibitem{pirash00} T. Pirashvili, \textit{Hodge decomposition for higher order Hochschild homology}, Ann. Sci. Ecole Norm. Sup (4) 33 (2000), no. 2, 151--179.
\bibitem{nee01} A. Neeman, \textit{Triangulated categories}, Annals of Mathematics Studies, 148, Princeton University Press, Princeton, NJ, 2001, viii+449 pp.
\bibitem{pryor15} D. Pryor, \textit{Special open sets in manifold calculus}, Bull. Belg. Math. Soc. Simon Stevin 22 (2015), no. 1, 89--103.
%\bibitem{sakai08} K. Sakai, \textit{Poisson structures on the homology of the space of knots}, Groups, homotopy and configuration spaces, 463--482, Geom. Topol. Monogr., 13, Geom. Topol. Publ., Coventry, 2008.
%\bibitem{sal01} P. Salvatore, \textit{Configurations spaces with summable labels. Cohomological methods in homotopy theory} (Bellatera, 1998), 375--395, Progr. Math. 196, Birkhauser, Basel, 2001. 
%\bibitem{sal06} P. Salvatore, \textit{Knots, operads and double loop spaces}, Int. Math. Res. Not. 2006, Art. ID 13628, 22 pp.
%\bibitem{sal10} P. Salvatore, \textit{The Topological cyclic Deligne conjecture}, Algebr. Geom. Topol. 9 (2009), no. 1, 237--264.
%\bibitem{serre77} Jean-Pierre Serre, \textit{Linear representations of finite groups}, Translated from the second French edition by Leonard L. Scott. Graduate Texts in Mathematics, Vol. 42. Springer-Verlag, New York-Heidelberg, 1977. x+170 pp. ISBN: 0--387--90190--6. 
%\bibitem{shubert49} H. Shubert, \textit{Die eindeutige Zerlegbarkeit eines Knoten in Primknoten}, Sitzungsber. Akad. Wiss. Heidelberg, math.-nat. KI., \textbf{3:57-167}. (1949).
%\bibitem{sin04} D. Sinha, \textit{Manifold-theoretic  compactifications of configuration spaces}, Selecta Math. 10 (2004), no. 3, 391--428.
%\bibitem{sin06} D. Sinha, \textit{Operads and knot spaces}, J. Amer. Math. Soc., 19 (2006), no.2, 461--486 (electronic).
%\bibitem{sin09} D. Sinha, \textit{The topology of spaces of knots: cosimplicial models}, Amer. J. Math. \textbf{131} (2009), no. 4, 945--980.
%\bibitem {sma59} S. Smale, \textit{The classification of immersions of spheres in Euclidean spaces}, Ann. of Math. (2), 69:327--344, 1959.
%\bibitem{songhaf12} P. A. Songhafouo Tsopm\'en\'e, \textit{Formality of Sinha's cosimplicial model for long knots spaces and the Gerstenhaber algebra structure of homology}, Algebr. Geom. Topol. 13 (2013) 2193--2205.
%\bibitem{songhaf13} P. A. Songhafouo Tsopm\'en\'e, \textit{The rational homology of spaces of long links} (2013), arXiv:1312.7280.
\bibitem{paul_don17-2}  P. A. Songhafouo Tsopm\'en\'e, D. Stanley, \textit{Polynomial functors in manifold calculus} (2017), arXiv:1708.02642.
%\bibitem{paul_don17-3}  P. A. Songhafouo Tsopm\'en\'e, D. Stanley, \textit{Classification of homogeneous functors in manifold calculus}, Work in progress. 
\bibitem{paul_don17-4}  P. A. Songhafouo Tsopm\'en\'e, D. Stanley, \textit{Manifold calculus and triangulated categories}, Work in progress.

%\bibitem{spi01} M. Spitzweck, \textit{Operads, Algebras and Modules in General Model Categories}, preprint arXiv:math.AT/ 0101102 (2001).
%\bibitem{turchin12} V. Turchin, \textit{Context-free manifold calculus and the Fulton-MacPherson operad}, To appear in Algebr. Geom. Topol.
%\bibitem{tur10} V. Turchin, \textit{Delooping totalization of a multiplicative operad}, Journal of Homotopy and Related Structures (2013).
%\bibitem{turchin10} V. Turchin, \textit{Hodge-type decomposition in the homology of long knots}, J. Topol. 3 (2010), no. 3, 487--534.
%\bibitem{turchin04} V. Turchin, \textit{On the homology of the spaces of long knots}, from: "Advances in topological quantum field theory", NATO Sci. Ser. II Math. Phys. Chem. 179, Kluwer Acad. Publ., Dordrecht (2004) 23--52.
%\bibitem{turchin07} V. Turchin, \textit{On the other side of the bialgebra of chord diagrams}, J. Knot Theory Ramifications 16 (2007) 575--629. 
%\bibitem{vassiliev90} V. Vassiliev, \textit{Cohomology of knots spaces}, Theory of singularities and its applications, 23--69, Adv. Soviet Math., 1, Amer. Math. Soc., Providence, RI, 1990.
%\bibitem{ver71} J.L. Verdier, \textit{Cat\'egories d\'eriv\'ees}, in Springer Lecture Notes in Mathematics Vol 569, 262--311. 1971.
%\bibitem{weibel94} C. Weibel, \textit{An Introduction to Homological Algebra}, Volume 38 of Cambridge Studies in Advanced Mathematics, Cambridge University Press, Cambridge, 1994. 
%\bibitem{wei96} M. Weiss, \textit{Calculus of Embeddings}, Bull. Amer. Math. Soc. 33 (1996) 177--187. 
\bibitem{wei99} M. Weiss, \textit{Embedding from the point of view of immersion theory I}, Geom. Topol. 3 (1999), 67--101.
%\bibitem{weiss04} M.Weiss, \textit{Homology of spaces of smooth embeddings}, Q. J. Math. 55 (2004), no. 4, 499504.
\end{thebibliography}
\end{document}